\def\sfa{\mathsf{a}}
\def\sfb{\mathsf{b}}
\def\sfc{\mathsf{c}}
\def\cD{\mathcal{D}}
\def\bD{\mathbb{D}}
\def\EE{\mathbb{E}}
\def\sE{\mathscr{E}}
\def\cF{\mathcal{F}}
\def\cH{\mathcal{H}}
\def\sH{\mathscr{H}}
\def\cI{\mathcal{I}}
\def\cK{\mathcal{K}}
\def\cL{\mathcal{L}}
\def\sL{\mathscr{L}}
\def\bN{\mathbb{N}}
\def\cR{\mathcal{R}}
\def\RR{\mathbb{R}}
\def\R+{\mathbb{R}_+}
\def\sV{\mathscr{V}}
\def\upf{\textup{f}}
\def\uph{\textup{h}}
\def\upw{\textup{w}}
\def\TT{\mathbf{T}}
\def\ls{{\lesssim}}
\newcommand{\DD}{\mathbb D}
\renewcommand{\SS}{\mathcal S}
\def\RR{\mathbb{R}}
\def\EE{\mathbb{E}}
\def\ZZ{\mathbb{Z}}
\def\cD{{\mathcal D}}
\def\cF{{\mathcal F}}
\def\cH{{\mathcal H}}
\def\cP{{\mathcal P}}
\def\la{{\lambda}}
\def\si{{\sigma}}
\def\cP{{\mathcal  P}}
\def\cL{{\mathcal L}}
\def\Om{{\Omega}}
\def\al{{\alpha}}
\def\ga{{\gamma}}
\def\si{{\sigma}}
\def\la{{\lambda}}
\def\La{{\Lambda}}
\def\si{{\sigma}}
\def\al{{\alpha}}
\newcommand{\ep}{\varepsilon}
\newcommand{\1}{{\bf 1}}
\newcommand{\Blc}{\Big(}
\newcommand{\Brc}{\Big)}
\newcommand{\blk}{\big[}
\newcommand{\brk}{\big]}
\newcommand{\Blk}{\Big[}
\newcommand{\Brk}{\Big]}
\newcommand{\lc}{\left(}
\newcommand{\rc}{\right)}
\newcommand{\lk}{\left[}
\newcommand{\rk}{\right]}
\newcommand{\lt}{\left}
\newcommand{\rt}{\right}
\newcommand{\vertex}{\node[vertex]}
\newtheorem{thm}{Theorem}[section]
\newtheorem{prop}[thm]{Proposition}
\newtheorem{rmk}[thm]{Remark}
\newtheorem{defn}[thm]{Definition}
\theoremstyle{defn}
\newtheorem{exmp}[thm]{Example}
\theoremstyle{rmk}
\numberwithin{equation}{section}
\begin{document}

\title{Intermittency properties for a large class of stochastic PDEs driven by fractional  space-time  noises}

\author{Yaozhong Hu}
\address{Department of Mathematical and Statistical Sciences, University of Alberta, Edmonton, AB T6G 2G1, Canada}
\email{yaozhong@ualberta.ca}

\author{Xiong Wang}
\address{Department of Mathematical and Statistical Sciences, University of Alberta, Edmonton, AB T6G 2G1, Canada}
\email{xiongwang@ualberta.ca}


\date{}

\subjclass[2010]{Primary 60H15; secondary 26A33, 30E05, 35R60, 37H15, 60H07}
\keywords{Intermittency; fractional noise; stochastic heat equation; stochastic wave equation; stochastic fractional diffusion; mild solution; Feynman diagram formula; Green's functions; small ball nondegeneracy property; Hardy-Littlewood-Sobolev mass property.} 

\begin{abstract}
In this paper, we study intermittency properties    for various  stochastic PDEs with varieties of      space-time Gaussian  noises via matching upper and lower moment bounds of the solution. Due to the absence of the powerful Feynman-Kac formula, the lower moment bounds have been
 missing for many 
interesting equations except for the stochastic heat equation. This work introduces and explores
the Feynman diagram formula for the moments of the solution and the small ball nondegeneracy for the Green's function to obtain the lower bounds for all moments which match  the upper moment bounds. Our upper and lower moments are valid for various interesting equations, including stochastic heat equations, stochastic wave equations, stochastic heat equations with fractional Laplcians, and   stochastic diffusions which are both fractional in time and in space. 
\end{abstract}

\maketitle

\section{Introduction}
In this article, we consider the following   stochastic partial differential equation in the whole $d$-dimensional  Eulcidean space $\RR^d$: 
\begin{equation}\label{eq.LDiffu}
\sL u(t,x)=u(t,x)\dot{W}(t,x)\,,\qquad   t>0,~ x\in\RR^d\, 
\end{equation}
with some given initial condition(s).
Here $\sL$ denotes a general (including fractional order) partial differential operator and $\dot W(t,x)=\frac{\partial ^{d+1}}{\partial   t\partial x_1\cdots \partial x_d} W(t,x)$ is the Gaussian noise. Our approach can be applied to a large class of  operator  $\sL$. For this reason as     in \cite{HHNS15,NQ07}, instead of  giving  the  concrete form of $\sL$,  we shall impose conditions satisfied by  the Green's function associated with $\sL$.  

Let us briefly  recall the concept of Green's function. Suppose $f(t,x), t\ge 0,
x\in \RR^d$ is  a nice (smooth with compact support) function  and consider  the corresponding deterministic equation
\begin{equation}\label{eq.LDiffu_det}
\sL u(t,x)=f(t,x)\,,\qquad   t>0,~ x\in\RR^d\,.
\end{equation}
with the same initial condition(s) as 
in \eqref{eq.LDiffu}. The Green's function  associated with     $\sL$ is a (possibly generalized) function $G_{t,s}(x,y), 0\le s<t<\infty,
x,y\in \RR^d$ or a measure 
$G_{t,s}(x,y) dy:=G_{t,s}(x, dy)$ (we omit the explicit dependence of $G$ on $\sL$)  such that
the solution to \eqref{eq.LDiffu_det}  is given explicitly by
\begin{equation}\label{eq.MildSolL_det}
	u(t,x)=I_0(t,x)+\int_{0}^{t}\int_{\RR^d} G_{t,s}(x, y) f(s,y)dyds\,,
\end{equation}
where 
the term $I_0(t,x)$ depends on the initial data and the Green's function. 


If we formally replace $f(s,y)$ in \eqref{eq.MildSolL_det}   by $u(s,y)\dot W(s,y)$ and replace $\dot W(s,y)dsdy$
by   the Skorohod type stochastic integral $W(ds,dy)$,  then the solution to \eqref{eq.LDiffu} satisfies 
\begin{equation}\label{eq.MildSolL}
	u(t,x)=I_0(t,x)+\int_{0}^{t}\int_{\RR^d} G_{t,s}(x, y) u(s,y)W(d s,d y)\,,
\end{equation}
where the stochastic integral is interpreted in the Skorohod sense.
However, Unlike the previous identity  \eqref{eq.MildSolL_det}   the expression  \eqref{eq.MildSolL} is still an equation on $u$.  It is impossible  to make sense for each of the terms $\cL u(t,x)$ and
$u(t,x)\dot W(t,x)$ in a straightforward way
so it is impossible to find a solution satisfies \eqref{eq.LDiffu} literally. But it is possible to find $u(t,x)$ satisfies \eqref{eq.MildSolL}. A random field $u(t,x)$ satisfying \eqref{eq.MildSolL} will be called a mild solution (or random field solution)
to \eqref{eq.LDiffu}.  The existence and uniqueness of the   solution of 
\eqref{eq.MildSolL} have been well-studied together with the properties of the solution  for different operators $\sL$ and for different noise structures of $\dot W$.  For a recent survey on stochastic heat equation we refer to \cite{Hu19} and the references therein.  

One of the the most studied   properties of solution is the intermittency property
arose from the physics.  This property is related  the moment bounds of the solution. When 
\eqref{eq.LDiffu} is parabolic Anderson model, namely, when $\sL$ is a heat operator or fractional heat operator, then the  sharp (both lower and upper) moment  
 bounds are known, see 
\cite{BC2016, chen17, CHKN18,
CHS2018, CHSX2015, HHLN18, HHNT2015, lyu20},   and  we also refer to \cite{Hu19} and references therein. 
However,   when $\sL$ is wave operators 
(namely the hyperbolic Anderson model) or when $\sL$ is  (temporal) fractional differential operators,   the situation is different and as far as we know here are the progress achieved.
\begin{enumerate}
\item[(i)] Similar to   the stochastic heat equation (parabolic Anderson model)  by using the chaos expansion and the hypercontractivity inequality  one can obtain the (which we believe to be sharp) upper bounds.  It is also possible to obtain the  lower bound for the second moment.  There are many contributions to this topic and among them we mention only a few  
\cite{BC2014, BC2016, BJQ17, CHHH2017, SSX20}  and the references therein.   However, it is hard to obtain the sharp lower bound  for any $p$ moment  which matches  the upper bounds in terms of the growth of $p$. On the other hand, there are  works on the sharp second moment bound (ie $p=2$).  
\item[(ii)] Until now the success to obtain a sharp lower moment bounds largely relies on the clever application of the 
Feynman-Kac formula. However, there is no effective corresponding analogous formula for other equations. 
The only work  that we know is 
\cite[Theorem 4.1]{DM2009}, where the authors use an analogous  Feynman-Kac formula for stochastic wave equation obtained in \cite{DMT2008} to obtain a nice lower bound for all moments when the Gaussian noise is white in time and ``smooth" in space. Let us mention that after the completion of this work, we learned the announcement of a work \cite{Q21},  where the Gaussian noise is what they called 
 Dobri\'c-Ojeda one, namely, noise is  still white in time but with a  weight and the equation is one 
 dimensional stochastic wave equation.  The idea is still to make more careful use of the Feynman-Kac like 
 formula obtained in \cite{DMT2008}. 
\end{enumerate}
The objective of this paper is to obtain the sharp lower bounds for all   moments 
when the operator $\sL$ in \eqref{eq.LDiffu} is a wave operator or an  operator  which is  
fractional both in  time and in space.  The  Gaussian noise $\dot W $ can also be 
general. It does not need to be white or fractional in time.   

The approach that we use is   a generalization of the Feynman diagram formula. This formula allows us to keep track the terms in  the expectation of the product of several multiple Wiener-It\^o integrals which is very sophisticated.   
It is in some sense a
 brutal force method.  We fully explore the positivity of the Green's
 function. This property enables us to throw away some complicated terms and
 keep the main terms so that the remaining ones are possible to manage
 although still very sophisticated.  After 
the (fortunately   successful) 
isolation of the leading terms there
also remains an extremely  challenging problem 
of how to bound them from below. This is also a very serious issue. We find that many Green's 
functions satisfy what we call the small ball nondegeneracy property and this property can be used to ensure the sharp lower moment bounds.

 The Feynman diagram type formula that we obtain is essentially 
 analogous to the Feynman-Kac  type formula obtained in \cite{DMT2008}. However,
the former one  seems to be  more convenient for us to manage. Since we only use the 
positivity and the small ball nondegeneracy
properties, our approach
   is valid for a very large class of equations and for a large class of noise structure. 
 The equations include 
stochastic wave equation (SWE, $\sL=\partial_t^{2}-\Delta$),   stochastic  heat equation 
which is inhomogeneous and fractional in space
 ($(\alpha, A)$-SHE, $\sL=\partial_t-(-\nabla(A(x)\nabla))^{\alpha/2}$),  where $A$ is a positive definite
 symmetric matrix, 
 stochastic  partial differential  equations which is both fractional in time and in space 
 (SFDE, $\sL=\partial_t^{\beta}-\frac12(-\Delta)^{\alpha/2}$)
 (e.g. \cite{BC2014,BC2016,CHHH2017,CHS2018,CHW2018,DM2009} and references therein).   
 
We can also allow the  noise structure to be very general. We don't need it to be white or fractional 
in time or in  space. We follow the idea of \cite{HHNT2015} to assume that the covariance 
is bounded by some singular power functions. 

Here is the organization of the paper.  In Section \ref{s.2} we give the noise structure and
introduce the stochastic integral, mild solution, and  chaos expansion.   Section \ref{s.3} 
proposes  the general conditions  satisfied by the Green's function associated with 
$\sL$ and state our main results. 
Sections \ref{s.5} is devoted  to prove the   upper   moment bounds for  the solution.
This is done  by using the chaos expansion 
and the 
hypercontractivity inequality.  
Our main tool to prove the lower moment bounds is the generalization of the Feynman diagram 
formula for the expected value of product of several multiple Wiener-It\^o integrals. This formula is presented in Section \ref{s.4}.  After this preparation in Sections \ref{s.5} 
we  prove the   lower moment bounds of the solution.  To very that some famous operators $\sL$  satisfies the positivity and 
  small ball nondegeneracy conditions so that our results can be applied to cover a large class of interesting 
stochastic partial equations, we verify these  conditions for various   interesting  operators
$\sL$ in Section \ref{s.7}.

Throughout  the entire paper, we shall use the  notations    $\lesssim$,\ \ \  $\gtrsim$, \ and
$\simeq$  extensively.  The  meaning are 
conventional.  This is, $A \lesssim B$ (or $A \gtrsim B$) means that there are    constants $C \in (0, \infty)$  such that $A\le C B$ 
(or $B\le C A$, respectively). The notation   $A\simeq B$ means   that  both $A \lesssim B$ and $A \gtrsim B$ hold true.

\section{Noise covariance structure, mild solution and chaos expansion}\label{s.2} 
In this section, we   give the conditions satisfied by   the covariance of the noise $\dot W$ in \eqref{eq.LDiffu}. For this Gaussian noise we also define the (Skorohod type) integral, the mild solution,
and the chaos expansion of the solution candidate. These concepts are known, so we recall them very quickly to fix the notation throughout the paper. We refer to \cite{CHHH2017, H2017,   HHLNT17, HHLN18,  HHNT2015}
and the references therein for more details. 
The existence and uniqueness of the solution  in our new situation 
will be a consequence of the upper moment bounds.

\subsection{Noise covariance structure}
We assume that the noise   
 $\dot{W}(t,x)=\frac{\partial^{d+1}}{\partial t \partial {x_1} \cdots \partial {x_d}}$ $W(t,x)$ is mean zero Gaussian with the   following covariance  structure: 
\begin{equation} 
 \EE[\dot{W}(t,x)\dot{W}(s,y)]=\gamma(t-s)\Lambda(x-y)\,.
\end{equation}
The restriction that the covariance of the noise has this product form 
of a function of time variables  and a function of space variables 
is convenient. The reason that 
the time function is of the form $\gamma(t-s)$ means that the noise is stationary
(or the original process $W$ has stationary increment). The space function
is of the form $\Lambda(x-y)$ means that the noise is homogeneous. 

In order to simplify our presentation and in order to
cover the typical examples
we make the following  
assumptions.
For $\gamma$ we assume 
\begin{enumerate}[start=1,label=\textbf{(H\arabic*)}] 
	\item\label{H2}    There is a $\gamma\in (0, 1)$  such that     
	\[
	 c|t|^{-\gamma}\leq \gamma(t)\leq C|t|^{-\gamma}\,,\quad \forall \ t\in \RR_+  
	\]
for  some positive constants $c$, $C$. 
For convenience,  when  $\gamma=1$ we mean  $\gamma(t)=\delta(t)$.
\end{enumerate}
For  $\Lambda(\cdot) $ we assume that 
it  satisfies  one of the following three  conditions:
\begin{enumerate}[start=2,label=\textbf{(H\arabic*)}]
	\item\label{H3} There  is   $\lambda \in(0,d)$ such that
	\[
	 c|x|^{-\lambda} \leq \Lambda(x)\leq C|x|^{-\lambda}\,,\quad \forall \ x\in \RR^d\,.  
	\]
	\item\label{H4}   There  are constants  $\lambda_j\in(0,1), j=1, \cdots, d$ such that  
	\[
	 c\prod_{j=1}^d |x_j|^{-\lambda_j} \leq \Lambda(x)\leq C\prod_{j=1}^d |x_j|^{-\lambda_j}\,,\quad \forall \ x\in \RR^d\,. 
	\]	 
	In this case we denote $\la=\sum_{i=1}^d \la_i$.  
	\item\label{H5} When $d=1$ and $\gamma=1$,  we assume  $\Lambda(x)=\delta(x)$.
\end{enumerate}

\subsection{Stochastic integral}
We follow the approach of  
\cite{BC2016,H2017, HHNT2015, HN2009,N2006} to define stochastic integral.  First,  let us recall  the Fourier transform with respect to the spatial variables.  
Denote by  $\cD( \RR^{d})$   the space of real-valued infinitely differentiable functions with compact support on $\RR^d$ (We can also introduce $\cD(\RR_+\times  \RR^{d})$ in a similar way).  The Fourier transform 
is defined as  
\[
 \hat{f}(\xi)=\cF[f(\cdot)](\xi)=\int_{\RR^d}e^{-\iota \xi\cdot x}f(x)dx\,,
\]
and the the inverse Fourier transform is given by
\[
 \cF^{-1} f(x)=(2\pi)^{-d}\cF[f(\cdot)](-x)\,.
\]
Let $\cH$ is the Hilbert space defined as the completion of $\cD(\R+\times\RR^{d})$ equipped with the inner product given by
\begin{align}\label{Prod_H}
  \langle \phi,\psi \rangle_{\cH}&=\int_{(\R+\times\RR^{d})^2} \phi(t,x)\psi(s,y) \gamma(t-s)\Lambda(x-y)dtdxdsdy \\
  &=\frac{1}{(2\pi)^d} \int_{(\R+\times\RR^{d})^2} \gamma(t-s)\hat{\phi}(t,\cdot)(\xi)\overline{\hat{\psi}(s,\cdot)(\xi)} \mu(d\xi)\,,
\end{align}
where $\gamma:\RR\to\R+$ and $\Lambda:\RR^d\to \R+$ are non-negative definite functions and satisfy   \ref{H2} and one of the conditions \ref{H3}-\ref{H5}
introduced at the beginning of this section. 
Note that the space $\cH$ contains generalized functions.


The noise $\dot W$ can be described by an isonormal   family of mean zero Gaussian random variables
$\left\{ W(\phi)\,;  \phi\in \cD(\R+\times\RR^{d})\right\}$ with the covariance
$\EE[ W(\phi) W(\psi)]=\langle \phi,\psi \rangle_{\cH}$ for all  $\phi$ and $\psi$ in 
$\cD(\R+\times\RR^{d})$. This isometry  can be extended to   $\cH$ and  is denoted by
\[
 W(\phi)=\int_{\R+\times\RR^{d}} \phi(t,x)W(dt,dx)\,,\quad \hbox{for all $\phi\in\cH$}\,. 
\]
 
Let   $\cP$  be the set of     smooth and cylindrical
random variables of the form
\begin{equation*}
F=f(W(\phi_1),\dots,W(\phi_n))\,,
\end{equation*}
with $\phi_i \in \mathcal{H}$, $f \in C^{\infty}_p (\RR^n)$ (i.e.  $f$ and all
its partial derivatives have polynomial growth).  For $F\in \cP$ of the above form  we define  $DF$ as  the
$\mathcal{H}$-valued random variable  by the following expression
\begin{equation*}
DF=\sum_{j=1}^n\frac{\partial f}{\partial
x_j}(W(\phi_1),\dots,W(\phi_n))\phi_j\,.
\end{equation*}
The operator $D$ is closable from $L^2(\Omega)$ into $L^2(\Omega;
\mathcal{H})$  and we define the Sobolev space $\mathbb{D}^{1,2}$ as
the closure of  $\cP$ 
under the norm
\[
\|DF\|_{1,2}=\sqrt{\EE[F^2]+\EE[\|DF\|^2_{\mathcal{H}}]}\,.
\]
Given   any element $u \in L^2(\Omega;
\mathcal{H})$ if there is a $v\in L^2(\Omega)$ such that   
\begin{equation}\label{dual}
\EE  \lc v F \rc =\EE  \lc \langle DF,u
\rangle_{\mathcal{H}}\rc  \quad \hbox{ for any $F \in \mathbb{D}^{1,2}$ }
\end{equation}
 then we say that $u$ is in the domain 
  of $\delta$ and  we 
 call  it the {\it Skorohod integral} of $u$,
 denoted by
%

%
%
\[
 v=\delta(u)=\int_{0}^{\infty}\int_{\RR^d} u(t,x)W(d t,d x)\,.
\]
Obviously, when such $v$ (satisfying \eqref{dual}) 
exists, it is unique.
We refer to \cite{H2017, HHNT2015, N2006} for more details. 
Now with the Skorohod integral introduced, we give  the concept of mild solution as follows.
\begin{defn}
	An adapted random field $\{u(t,x):t\geq 0\,,x\in\RR^d\}$ 
	so that   $\EE[|u(t,x)|^2]<\infty$ for all $t\geq 0$ and $x\in\RR^d$ is called a mild solution to equation \eqref{eq.LDiffu} if for all $(t,x)\in \RR_+\times\RR^d$ the process
	\[
	 \{G _{t-s}(x,y)u(s,y)\1_{[0,t]}(s)\,:\,s\geq 0\,,y\in\RR^d\}
	\]
	is Skorohod integrable, and $u(t,x)$ satisfies
	\begin{equation}\label{eq.MildSolL'}
	u(t,x)=I_0(t,x)+\int_{0}^{t}\int_{\RR^d} G_{t-s}(x, y) u(s,y)W(d s,d y)\,,
\end{equation}
where $G_{t}(x,y)$ is the Green's function associated with $\sL$ and $I_0(t,x)$ is from  the initial
condition(s) and the Green's function.  
\end{defn}

%
%



If $u$ is a mild solution to \eqref{eq.LDiffu}, namely if 
$u$ satisfies \eqref{eq.MildSolL'},  then 
$u(s,y)=I_0(s,y)+\int_{0}^{t}\int_{\RR^d} G_{s-r}(y, z) u(r,z)W(d r,d z)$.
Substituting   this expression into \eqref{eq.MildSolL'}  
we  obtain 
\begin{eqnarray*}
u(t,x)
&=&I_0(t,x)+\int_{0}^{t}\int_{\RR^d} G_{t-s}(x, y) I_0(s,y)W(d s,d y)\\
&&\qquad +\int_{0}^{t}\int_0^r \int_{\RR^{2d}} G_{t-s}(x, y) 
	G_{s-r}(y, z)u(r,z) W(dr,dz) W(d s,d y) \,.  
\end{eqnarray*}
Repeating   this procedure
  we obtain a solution candidate for 
the  equation \eqref{eq.MildSolL'}:  
\begin{equation}\label{eq.WCE}
	u(t,x)=I_0(t,x) +\sum_{n=1}^{\infty} I _n(f_n(\cdot,t,x))\,.  
\end{equation}
Here 
\begin{align}\label{eq.f_n_symm}
& {f_n}(\cdot,t,x):=  {f_n}(t_1,x_1,\cdots,t_n,x_n,t,x)\\
=&\frac{1}{n!} \sum_{\sigma\in S_n} G_{t-t_{\sigma(n)}}(x,x_{\sigma(n)})G_{t_{{\sigma(n)}}-t_{{\sigma(n-1)}}}(x_{{\sigma(n)}},x_{{\sigma(n-1)}})	\cdots \nonumber\\
&\qquad\quad \times G_{t_{{\sigma(2)}}-t_{{\sigma(1)}}}(x_{{\sigma(2)}},x_{{\sigma(1)}})I  _0(t_{{\sigma(1)}},x_{{\sigma(1)}}) \1_{\{0<t_{{\sigma(1)}}<\cdots<t_{{\sigma(n)}}<t\}} \nonumber
\end{align}
is the symmetrization of 
\begin{align}\label{eq.f_n} 
 &G_{t-t_n}(x,x_n)G_{t_{n}-t_{n-1}}(x_{n},x_{n-1})\cdots 	\nonumber\\
& \times G_{t_{2}-t_{1}}(x_{2},x_{1})I _0(t_1,x_1) \1_{\{0<t_1<\cdots<t_n<t\}}  \,, 
\end{align}
where $S_n$ denotes the set of all permutations of $\{1,\dots,n\}$;     and $I_n(f_n(\cdot, t,x))$ is the multiple Wiener-It\^o  integral (e.g. \cite{H2017, N2006}).  
%
%
%
The expression \eqref{eq.WCE} is called the Wiener chaos expansion 
(or simply chaos expansion) of the solution.  It is known that if \eqref{eq.WCE} is convergent in $L^2(\Om)$, then
\eqref{eq.LDiffu} has a unique mild solution.

\section{Small ball nondegeneracy and main results} \label{s.3}  
\subsection{Small ball nondegeneracy for Green's function} 
Our main aim of this paper is to study the lower and upper asymptotics of the moments of mild solution defined in \eqref{eq.MildSolL}, which match with each other. 
What we need is the following assumptions on the Green's function associated with the operator $\sL$. The following assumptions is made in order to 
derive the sharp lower asymptotics:
\begin{enumerate}[label=\textbf{(G\arabic*)}]
	\item\label{A1}[\textbf{Positivity}]: $G_t (\cdot,\cdot)$ is a positive function, 
	measure, or generalized function.  
	\item\label{A2}[\textbf{Small ball nondegeneracy}]: $G_t (\cdot,\cdot)$ satisfies the \emph{small ball nondegeneracy (B($\sfa,\sfb$))}. This is, there exist  real numbers $\sfa$ and $\sfb$ (depending on the Green's function) satisfying 
	\begin{equation}\label{Cond.G2}
{ 		\sfa>-1\,,\quad \sfb>0\,,\quad \text{and}\quad \sfb(2\sfa+1)-\lambda>0\,,}
	\end{equation}
	and there is a constant $C>0$ such that
	\begin{equation}\label{A.HSB}
	\inf_{y\in   B_{\ep}(x)}	\int_{B_{\ep}(x)}G_{t} (y,z)dz \geq C \cdot t^\sfa\,,
	\end{equation}
	for all $0<t\leq \ep^\sfb \le 1$ and $x\in\RR^d$,  where $B_{\ep}(x)$ is the ball of center $x$ with radius $\ep$.  
\end{enumerate}
To  obtain  the  upper bound for moments 
what    we need is the following hypothesis for the Green's function. 
\begin{enumerate}[start=3,label=\textbf{(G\arabic*)}]
	\item\label{A3}[\textbf{HLS-type mass property}]: $G_t (\cdot,\cdot)$ satisfies what we shall call  the \emph{Hardy-Littelewood-Sobolev type  mass property} \emph{M$(\hbar)$}. That  is, there exist a real number $\hbar$ and a constant $C>0$ satisfying 
	 \begin{equation}\label{Cond.G3}
	 	\hbar >-1\,,
	 \end{equation}
	   and  
	 \begin{equation}\label{A.F_upper}
	 	\sup\limits_{x,x'\in\RR^d} \int_{\RR^{2d}} G_t(x,y)\Lambda(y-y') G_t(x',y')dydy' \leq C\cdot t^{\hbar}\,.
	 \end{equation}
\end{enumerate}
\begin{rmk}
	The task to verify the assumptions  
\textbf{(G1)-(G3)}, in particular to find the 
sharp indices $\sfa, \sfb, \hbar$ in \eqref{A.HSB}-\eqref{A.F_upper} is not trivial. We shall dedicate one section (Section \ref{s.7})
to verify  these conditions for various   partial differential operators $\cL$  that are     currently interested  by researchers. For  different operators $\sL$, we shall obtain the best indices $\sfa, \sfb, \hbar$ in the sense that our upper and lower $p$-moment  will match each other as $p$ or $t$ tends to infinity. 
\end{rmk}

\begin{rmk}
The hypothesis \ref{A3} is quite  standard for the upper moment bounds. When $G_t$ is a function (rather than a measure), then we can easily apply Hardy-Littelewood-Sobolev inequality (\cite[Theorem 4.3]{LL1997}) to obtain 
\begin{align*}
	\sup\limits_{x,x'\in\RR^d}& \int_{\RR^{2d}} G_t(x,y)\Lambda(y-y') G_t(x',y')dydy' \\
	&\leq \sup\limits_{x,x'\in\RR^d} \int_{\RR^{2d}} G_t(x,y)|y-y'|^{-\lambda} G_t(x',y')dydy' \\
	&\leq \sup\limits_{x\in\RR^d} \lk \int_{\RR^{d}}|G_t(x,y)|^{\frac{2d}{2d-\lambda}} dy\rk^{\frac{2d-\lambda}{d}}\,.
\end{align*}
 Then one can check the last integral is less than $C\cdot t^{\hbar}$ for some $\hbar>-1$ and $C>0$ to verify \eqref{A.F_upper}.
\end{rmk}
\begin{rmk}
In this remark, we give some intuitive connections between \ref{A2} and \ref{A3}. If we assume $G_t (x,y)=G_t(x-y)$ satisfies what we shall call  the \emph{total weighted  mass property} \emph{$\bar{M}(\upmu ,\upnu )$}: 
there exist  real numbers $\upmu$ and $\upnu$ (depending on the Green's function) satisfying 
	 \begin{equation}\label{Cond.G3_1}
	 	\upmu >-1\,,\quad \upnu \in\RR\,,\quad \text{and}\quad\upmu +\upnu >-1\,,
	 \end{equation}
	 and there are two positive constants $C_1$ and $C_2$ such that
	\begin{equation}\label{A.F_upper_1}
		\begin{cases}
		\int_{\RR^d} G_t(y)dy\leq C_1\cdot t^{\upmu }\,,\\
		\sup\limits_{x\in\RR^d} \int_{\RR^d} G_t(x-y)\Lambda(y)dy \leq C_2\cdot t^{\upnu }\,.
	\end{cases}	
	\end{equation}
Then we can easily see \eqref{A.F_upper} holds with $\hbar=\upmu+\upnu>-1$. Furthermore, we notice that  $\upmu=\sfa$,   where $\sfa$ is the same parameter in  \ref{A2}.

Let us discuss the SHE and SWE in one dimension as examples. It is easy to see   from Hardy-Littlewood rearrangement inequality  (see \cite[Theorem 3.4]{LL1997})  that 
	\begin{align*}
		\sup_{y\in\RR} \int_{\RR} G^{\uph}_t(x-y)\Lambda(x)dx \leq& \int_{\RR} \frac{1}{\sqrt{2\pi t}}e^{-\frac{x^2}{2t}}|x|^{-\lambda}dx \leq C\cdot t^{-\frac{\lambda}{2}}\,; \\
		\sup_{y\in\RR} \int_{\RR} G^{\upw}_t(x-y)\Lambda(x)dx \leq& \int_{-t}^{t} |x|^{-\lambda}dx \leq C\cdot t^{1-\lambda}\,,
	\end{align*}
	where $G^{\uph}_t$ and $G^{\upw}_t$ are heat kernel and wave kernel, respectively. In addition, we know that 
	\begin{equation*}
		\int_{\RR} G^{\uph}_t(x)dx=1\,,\quad \int_{\RR} G^{\upw}_t(x)dx=t\,.
	\end{equation*}
	Thus, \eqref{A.F_upper} holds  for SHE and SWE. Moreover, it will be  shown in Section \ref{s.7} that SHE and SWE satisfy \emph{small ball nondegeneracy} with \emph{$\sfa=0$} and \emph{$\sfa=1$}, respectively.
\end{rmk}

\subsection{Main results}  
In this subsection  we present our main results. This is, we give the upper moment estimates in Theorem \ref{thm.UMB} and lower moment in Theorem \ref{thm.LMB}. In fact, with $\gamma(\cdot)$, $\Lambda(\cdot)$ and $G_t(\cdot)$ satisfying conditions stated before, we also give the relation among the indices $\sfa$,
$\sfb$ and $\hbar$ so that the exponents in $t$ and $p$ in the lower and upper moments match with each others (see the Table  \ref{Table.LUM}).

First we state the result for the upper moment bounds. 
\begin{thm}\label{thm.UMB}
Assume 
	 $\gamma(\cdot)$ satisfies (the upper inequality in) \ref{H2} and $\Lambda(\cdot)$ satisfies (the upper inequality in) one of \ref{H3}-\ref{H5}.
	 Let  the Green function $G_t(\cdot)$ satisfy \ref{A3}.    Assume that the initial condition
	 term $I_0(t,x)$ is bounded, namely, 
	 there is a positive constant $C$ such that $\sup_{(t,x)\in 
	 \RR_+\times \in \RR^d}
	 |I_0(t,x)|\le C$.      Then there is a unique   mild solution $u(t,x)$   satisfying  \eqref{eq.MildSolL}. Moreover,   there are some constants $C_1$ and $C_2$ do not depend on $t$, $p$ and $x$ such that
\begin{align}\label{eq.UMB_thm}
		\EE\lk|u(t,x)|^p \rk \leq& \, C_1 \exp\lc C_2\cdot t^{1+\frac{1-\gamma}{\hbar +1}}\cdot p^{1+\frac{1}{\hbar +1}} \rc\,. 
	\end{align}
\end{thm}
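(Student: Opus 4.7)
I would combine the Wiener chaos expansion \eqref{eq.WCE}, Nelson's hypercontractivity inequality, and a sharp estimate of $\|f_n\|_{\cH^{\otimes n}}$ driven by iterated applications of the HLS-type mass property \ref{A3}. By Minkowski's inequality in $L^p(\Omega)$ applied to \eqref{eq.WCE} and Nelson's hypercontractivity on each fixed chaos, for any $p\ge 2$,
\begin{equation*}
\|u(t,x)\|_{L^p(\Omega)}\;\le\;|I_0(t,x)|+\sum_{n\ge 1}(p-1)^{n/2}\sqrt{n!}\,\|f_n(\cdot,t,x)\|_{\cH^{\otimes n}}.
\end{equation*}
Since $|I_0|$ is bounded, everything reduces to a good upper bound on $\|f_n\|_{\cH^{\otimes n}}$, and the existence/uniqueness of the mild solution will be a byproduct of the $p=2$ case, which gives convergence of the chaos series in $L^2(\Omega)$.

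To bound $\|f_n\|_{\cH^{\otimes n}}^2$ I would expand through the inner product \eqref{Prod_H}. At fixed time variables on the double simplex $\{0<t_1<\cdots<t_n<t\}\times\{0<s_1<\cdots<s_n<t\}$, the spatial piece
\begin{equation*}
S(\vec t,\vec s):=\int_{\RR^{2nd}}\prod_{i=1}^{n}G_{\Delta t_i}(x_{i+1},x_i)G_{\Delta s_i}(y_{i+1},y_i)\prod_{i=1}^{n}\Lambda(x_i-y_i)\,dxdy
\end{equation*}
(with $x_{n+1}=y_{n+1}=x$ and $\Delta t_i:=t_{i+1}-t_i$) can be controlled by peeling off the pairs $(x_i,y_i)$ inductively from the innermost outward: Cauchy--Schwarz for the positive semidefinite bilinear form $B(f,g):=\iint f(u)\Lambda(u-v)g(v)\,dudv$ gives $|B(f,g)|\le\sqrt{B(f,f)\,B(g,g)}$, and the two diagonal pieces are each controlled by \eqref{A.F_upper}, contributing a factor $C(\Delta t_i\Delta s_i)^{\hbar/2}$ per step. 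Combined with the upper bound $\gamma(\tau)\le C|\tau|^{-\gamma}$ from \ref{H2} and a careful Selberg/Dirichlet-type evaluation of the remaining time integral on the double simplex, I expect
\begin{equation*}
\|f_n(\cdot,t,x)\|_{\cH^{\otimes n}}^2\;\le\;\frac{C^n\,t^{\,n(\hbar+2-\gamma)}}{\Gamma\!\big(n(\hbar+2)+1\big)}.
\end{equation*}

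Feeding this back into the Minkowski--hypercontractivity inequality and invoking Stirling's formula, the general term of the series is maximized near $n^\ast\asymp[(p-1)\,t^{\hbar+2-\gamma}]^{1/(\hbar+1)}$, giving $\|u(t,x)\|_{L^p(\Omega)}\le C_1\exp\!\lc c\,t^{(\hbar+2-\gamma)/(\hbar+1)}(p-1)^{1/(\hbar+1)}\rc$, and raising to the $p$-th power yields \eqref{eq.UMB_thm}.

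\textbf{Main obstacle.} The delicate step is obtaining the sharp denominator $\Gamma(n(\hbar+2)+1)$ in the kernel estimate. A naive decoupling of the double-simplex time integral---for instance, splitting $(\Delta t_i\Delta s_i)^{\hbar/2}$ by AM--GM and treating the $t$- and $s$-ordered simplices separately before integrating out $s_i$ over the unordered cube---produces only the weaker $\Gamma(n(\hbar+1)+1)$, which misses one effective factor of $n!$ and would inflate the moment exponents (yielding $p^{1+1/\hbar}$ instead of $p^{1+1/(\hbar+1)}$). Retaining the correct factor requires treating simultaneously the time-ordering on both simplices and the coupling through $\prod_i|t_i-s_i|^{-\gamma}$, via either a Selberg-type identity or an iterated fractional-integral argument on the double simplex. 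The spatial peeling step and the final Stirling/optimization argument, once this sharper temporal estimate is secured, are relatively routine.
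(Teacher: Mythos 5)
Your overall plan — chaos expansion, chaos-by-chaos hypercontractivity, spatial peeling via the HLS-type mass property \ref{A3}, and a sharp estimate of the coupled time integral on the double simplex — is precisely the skeleton of the paper's proof, and you have correctly identified the crucial target: the denominator $(n!)^{\hbar+2}$, rather than $(n!)^{\hbar+1}$, in the bound for $\|f_n\|_{\cH^{\otimes n}}^2$. However, the mechanism you gesture at for the coupled time integral (``a Selberg-type identity or an iterated fractional-integral argument on the double simplex'') is left unspecified, and this is exactly where the proof lives or dies.

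The paper resolves this step by a cleaner reduction rather than a direct double-simplex evaluation. After the Cauchy--Schwarz inequality $\Psi_n(\vec t,\vec s)\le\sqrt{\Psi_n(\vec t,\vec t)\,\Psi_n(\vec s,\vec s)}$ for the positive semidefinite kernel $\Lambda$ (your pairwise bilinear-form peeling yields the equivalent factorized bound $\Psi_n(\vec t,\vec s)\lesssim\prod_j(\Delta t_j\Delta s_j)^{\hbar/2}$, so either works as a starting point), the coupled time integral
$\int_{[0,t]^{2n}}\prod_j|t_j-s_j|^{-\gamma}\sqrt{\Psi_n(\vec t,\vec t)\Psi_n(\vec s,\vec s)}\,d\vec t\,d\vec s$
is handled by the one-dimensional Hardy--Littlewood--Sobolev inequality in time (inequality (2.4) of \cite{HN2009}), which bounds it by $\lc\int_{[0,t]^n}\Psi_n(\vec s,\vec s)^{1/(2H)}\,d\vec s\rc^{2H}$ with $H=1-\gamma/2$. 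The inner integral now lives on a \emph{single} simplex and is an elementary Dirichlet--Beta integral, producing $\Gamma(n\hbar/(2H)+n+1)$ in the denominator; after raising to the power $2H$ this gives $(n!)^{\hbar+2H}$ and hence the sharp $(n!)^{\hbar+1}$ for $\|u_n(t,x)\|_{L^2}^2$, exactly the exponent you need. Without this HLS-in-time decoupling (or an equivalent substitute), your outline stops short of a proof at precisely the point you yourself flag as the obstacle.
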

The proof of this result will be given in next section  (Section \ref{s.4})  by using the hypercontractivity inequality.   

\begin{rmk} This result is   new in the sense that it holds now for general operator  $\sL$ satisfying 
\ref{A3}.  When $\sL$ is the heat operator,  wave operator,
fractional $\al$-diffusion operator, or partial differential operator both fractional in time and space but homogeneous in space, the result is known 
(e.g. \cite{BC2016, CHHH2017,  Hu19, SSX20}, references therein and   other references.
\end{rmk} 
Our main contribution of this work is the following lower moment bounds
for a general partial differential operator $\sL$. 
\begin{thm}\label{thm.LMB}
Assume  $\gamma(\cdot)$ satisfies (the lower inequality in)  \ref{H2} and $\Lambda(\cdot)$ satisfies (the lower inequality in)  one of \ref{H3}-\ref{H5}. 
Let the Green function $G_t(\cdot)$ satisfy (the lower inequality in)  \ref{A1} and \ref{A2}.   
If the initial condition satisfies $\inf_{(t,x)\in \RR_+\times \RR^d} I_0(t,x)\ge 
c_0 $ for some constant $c_0>0$,   then there are some positive 
constants $c_1$ and $c_2$ independent of    $t$, $p$ and $x$ such that we have
\begin{align}\label{eq.LMB_thm}
		\EE\lk|u(t,x)|^p \rk \geq \, c_1 \exp\lc c_2\cdot t^{1+\frac{\sfb\cdot(1-\gamma)}{\sfb (2\sfa+1)-\lambda}}\cdot p^{1+\frac{\sfb}{\sfb (2\sfa+1)-\lambda}} \rc\,.
	\end{align}
\end{thm}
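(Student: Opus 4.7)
Fix $p = 2k$ with $k \in \NN$; the case of general $p \geq 2$ follows by interpolation. By \ref{A1}, the hypothesis $I_0 \geq c_0 > 0$, and the nonnegativity of $\gamma$ and $\Lambda$ built into \ref{H2}-\ref{H5}, every kernel $f_n$ in the Wiener chaos expansion \eqref{eq.WCE} is pointwise nonnegative. The plan is to lower-bound $\EE[u(t,x)^{2k}]$ by restricting to a carefully chosen subfamily of Feynman diagrams, using the universal positivity to discard the rest, and then to apply the small ball nondegeneracy \ref{A2} together with the lower estimates in \ref{H2}-\ref{H5} on the retained diagrams.

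Expanding $\EE[u^{2k}] = \sum_{\mathbf{n}} \EE[\prod_{i=1}^{2k} I_{n_i}(f_{n_i})]$ and invoking the Feynman diagram formula of Section~\ref{s.4}, each expectation becomes a sum indexed by pairings of the $\sum_i n_i$ noise legs (with no intra-copy self-pairings) of iterated integrals of $\prod_i f_{n_i}$ against products of covariance factors $\gamma\Lambda$ along the pairs. Every such integrand is nonnegative. I would fix an integer $n$ (to be optimized), restrict the outer sum to $n_1 = \cdots = n_{2k} = n$, partition $[0, t]$ into $n$ equal slots $I_j$ of width $\delta := t/n$, and retain only the ``slot-matched'' pairings: for each slot $j$, the $2k$ noise legs at slot position $j$ (one per copy) are grouped into $k$ unordered pairs, giving $((2k-1)!!)^n$ such diagrams. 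This combinatorial multiplicity is what drives the super-linear $p$-dependence of the final bound.

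For each slot-matched diagram, set $\ep := \delta^{1/\sfb}$ so that the scaling $\delta = \ep^{\sfb}$ required by \ref{A2} holds, and restrict integration to $\{t_j^{(i)} \in I_j,\ x_j^{(i)} \in B_\ep(x)\}$. Iterating \ref{A2} along each copy's Green's function chain (integrating $x_1^{(i)}, \ldots, x_n^{(i)}$ sequentially over $B_\ep(x)$) gives, with $\tilde f_n$ the unsymmetrized kernel in \eqref{eq.f_n},
\[
\int_{B_\ep(x)^n} \tilde f_n(\mathbf{t}^{(i)}, \mathbf{x}^{(i)})\, d\mathbf{x}^{(i)} \gtrsim c_0\, C^n\, (t/n)^{n\sfa},
\]
while \ref{H2}-\ref{H5} furnish $\Lambda(x_j^{(i)} - x_j^{(i')}) \gtrsim \ep^{-\lambda}$ on $B_\ep(x)$ and $\iint_{I_j^2} \gamma(s-s')\,ds\,ds' \gtrsim (t/n)^{2-\gamma}$. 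Combining these per-diagram bounds with the diagram count yields an estimate of the form
\[
\EE[u(t,x)^{2k}] \gtrsim ((2k-1)!!)^n\, c_0^{2k}\, C^{kn}\, (t/n)^{kn\alpha}, \qquad \alpha = 2 + 2\sfa - \gamma - \lambda/\sfb = 1 - \gamma + \beta,
\]
with $\beta = (\sfb(2\sfa+1) - \lambda)/\sfb > 0$ by \eqref{Cond.G2}. Optimizing in $n$ while respecting the scaling $\delta = \ep^\sfb$ then produces, after regrouping exponents, the claimed rate $\exp(c_2\, t^{1+(1-\gamma)/\beta}\, p^{1+1/\beta})$, with the super-linear exponents emerging from the balance between the combinatorial multiplicity and the per-diagram factor $(t/n)^{kn\alpha}$.

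The principal obstacle lies in identifying the correct subfamily of Feynman diagrams: it must be simultaneously combinatorially rich (to drive the super-linear $p$-exponent, via the count $((2k-1)!!)^n$) and structurally simple (so that each per-diagram integral admits a clean lower bound through \ref{A2}). Positivity of $G$, $I_0$, $\gamma$, and $\Lambda$ is essential here, since it is what lets us discard the (combinatorially and analytically intractable) non-slot-matched diagrams without loss. Secondary technical issues include the extension from even integer $p$ to arbitrary $p \geq 2$ (by Hölder's inequality) and ensuring the constants $c_1, c_2$ are genuinely independent of $p$, $t$, and $x$.
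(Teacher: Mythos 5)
Your overall strategy—Feynman diagram formula, use of positivity to discard diagrams, and the small ball nondegeneracy \ref{A2} on the retained ones—is the right one and is essentially the paper's approach. However, your choice of the retained subfamily is combinatorially too poor, and your claim that its optimization ``produces the claimed rate'' is false when carried out.

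Concretely, with $p=2k$ and $n$ legs per copy (so $m:=kn$ pairs), you retain only ``slot-matched'' pairings, which number $((2k-1)!!)^n \simeq (2k/e)^{kn}$. On each such diagram you do obtain the sharper per-pair bound $\gamma\gtrsim(t/n)^{-\gamma}$ and arrive at $\EE[u^{2k}]\gtrsim ((2k-1)!!)^n (t/n)^{kn\alpha}$ with $\alpha=2+2\sfa-\gamma-\lambda/\sfb=1-\gamma+\beta$. But optimizing the logarithm $nk\log(2k/e)+kn\alpha\log(t/n)$ over $n$ gives $n\simeq t\,k^{1/\alpha}$ and the value $kn\alpha\simeq t\,k^{1+1/\alpha}$; i.e.\ your bound is $\exp(c\,t\,p^{1+1/(1-\gamma+\beta)})$, which for $\gamma\in(0,1)$ is \emph{strictly weaker} than the target $\exp(c\,t^{1+(1-\gamma)/\beta}p^{1+1/\beta})$ in both the $t$ and $p$ exponents. (In the SHE example with $\gamma=1/2$, $\lambda=1$, $\sfa=0$, $\sfb=2$ you would get $\exp(c\,t\,p^{2})$ rather than the correct $\exp(c\,t^{2}p^{3})$.) The paper instead declares the first $p/2$ rows as upper and the remaining rows as lower vertices, retaining \emph{all} $m!=(kn)!$ bijections between the $m$ upper and $m$ lower vertices---larger than your count by roughly $(n/2)^{kn}$. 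This is compensated on the $\gamma$ side only by the crude estimate $\gamma\gtrsim t^{-\gamma}$ per pair, costing a factor $n^{kn\gamma}$ relative to your sharper bound, which is a net win since $\gamma<1$. The $m!$ count, not $((2k-1)!!)^n$, is what produces the super-linear exponents $1+(1-\gamma)/\beta$ in $t$ and $1+1/\beta$ in $p$ that match the upper bound of Theorem~\ref{thm.UMB}. Your subfamily must therefore be enlarged to include pairings across different slots (as the paper's upper/lower bijections do), accepting the cruder $\gamma$ bound; the finer $\gamma$ control on slot-matched pairs cannot save the argument.
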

The proof of this theorem replies on the Feynman diagram formula
for the moments of a chaos expansion. This formula will be presented in Section \ref{s.5}  and 
will be used in Section \ref{s.6} to prove the above theorem.

Consequently, combing Theorem \ref{thm.UMB} and \ref{thm.LMB} we obtain the following theorem about the matching  upper and lower moment bounds. 
\begin{thm}\label{thm.LUMB}
Assume  $\gamma(\cdot)$ satisfy \ref{H2} and $\Lambda(\cdot)$ satisfy  one of \ref{H3}-\ref{H5}.
Assume the Green function $G_t(\cdot)$ satisfy  \ref{A1}-\ref{A3} with
	\begin{equation}\label{Cond.G2_3}
		\hbar :=2\sfa-\frac{\lambda}{\sfb}>-1
	\end{equation}  
If the initial condition satisfies $ c_0\le  I_0(t,x) \le  
C_0 $ for some positive constants $0<c_0 < C_0<\infty$,   then  the mild solution $u(t,x)$ to \eqref{eq.LDiffu} satisfies 
	\begin{align}\label{eq.ULMB_thm}
	&	  c_1 \exp\lc c_2\cdot t^{1+\frac{\sfb\cdot(1-\gamma)}{\sfb (2\sfa+1)-\lambda}}\cdot p^{1+\frac{\sfb}{\sfb (2\sfa+1)-\lambda}} \rc
	\nonumber\\
	 &\qquad\qquad 
		  \le \EE\lk|u(t,x)|^p \rk \leq 
		  c_1 \exp\lc c_2\cdot t^{1+\frac{\sfb\cdot(1-\gamma)}{\sfb (2\sfa+1)-\lambda}}\cdot p^{1+\frac{\sfb}{\sfb (2\sfa+1)-\lambda}} \rc
	\end{align}
	for all $t\ge 0$, $x\in \RR^d$, $p\in \ZZ_+$,  where   $c_1, c_2, C_1, C_2$ are some  positive constants, independent of $t,x,p$.  
\end{thm}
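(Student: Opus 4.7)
The plan is to derive Theorem \ref{thm.LUMB} as a direct consequence of the two preceding theorems, with the matching of exponents following from the algebraic relation \eqref{Cond.G2_3}. First I would observe that the hypotheses of the combined theorem include all those of Theorem \ref{thm.UMB} and of Theorem \ref{thm.LMB}: the two-sided bounds \ref{H2} on $\gamma$ and \ref{H3}--\ref{H5} on $\Lambda$ are assumed; the Green's function satisfies \ref{A1}--\ref{A3}; and the initial condition satisfies both $I_0(t,x)\le C_0$ (needed for the upper bound) and $I_0(t,x)\ge c_0>0$ (needed for the lower bound). In particular, the existence and uniqueness of the mild solution $u(t,x)$ is already guaranteed by Theorem \ref{thm.UMB}.

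Next I would apply the two theorems separately. Theorem \ref{thm.UMB} yields
\[
\EE\lk|u(t,x)|^p\rk \le C_1\exp\lc C_2\cdot t^{1+\frac{1-\gamma}{\hbar+1}}\cdot p^{1+\frac{1}{\hbar+1}}\rc,
\]
while Theorem \ref{thm.LMB} yields
\[
\EE\lk|u(t,x)|^p\rk \ge c_1\exp\lc c_2\cdot t^{1+\frac{\sfb(1-\gamma)}{\sfb(2\sfa+1)-\lambda}}\cdot p^{1+\frac{\sfb}{\sfb(2\sfa+1)-\lambda}}\rc.
\]
The remaining task is to verify that the two exponents of $p$ (and then of $t$) coincide under the hypothesis \eqref{Cond.G2_3}.

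This is just algebra. From $\hbar = 2\sfa-\lambda/\sfb$ we obtain
\[
\hbar+1 = 2\sfa+1-\frac{\lambda}{\sfb} = \frac{\sfb(2\sfa+1)-\lambda}{\sfb},
\]
so $\frac{1}{\hbar+1} = \frac{\sfb}{\sfb(2\sfa+1)-\lambda}$, which is precisely the exponent appearing in the lower bound on $p$. Multiplying by $(1-\gamma)$ gives the matching exponent on $t$. Note also that the constraint $\hbar>-1$ in \ref{A3} is exactly the last inequality $\sfb(2\sfa+1)-\lambda>0$ appearing in \eqref{Cond.G2}, so the two sets of admissibility conditions are compatible.

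Since there is essentially nothing else to prove, the main point of this proof is conceptual rather than technical: the heart of the work lies in Theorems \ref{thm.UMB} and \ref{thm.LMB}, and the role of \eqref{Cond.G2_3} is to synchronize the parameter $\hbar$ arising from the Hardy--Littlewood--Sobolev mass property \ref{A3} with the parameters $\sfa,\sfb,\lambda$ arising from the small ball nondegeneracy \ref{A2} and the spatial covariance. There is no genuine obstacle; one simply has to adjust the constants $c_1,c_2,C_1,C_2$ (which depend on the data but not on $t,x,p$) and take them to be consistent across both bounds by enlarging or shrinking them as needed. I would conclude by remarking that the relation \eqref{Cond.G2_3} is in fact natural: given any three of the parameters $\sfa,\sfb,\hbar,\lambda$, the fourth is uniquely determined if sharpness is to hold, so \eqref{Cond.G2_3} is the optimal compatibility condition under which the two-sided bounds match.
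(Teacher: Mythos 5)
Your proposal is correct and follows the same route as the paper: the authors likewise deduce the theorem by invoking Theorem \ref{thm.UMB} and Theorem \ref{thm.LMB} under the combined hypotheses and then substituting $\hbar = 2\sfa - \lambda/\sfb$ into the upper-bound exponents to see that they match the lower-bound ones. Your explicit verification that $\hbar+1 = \frac{\sfb(2\sfa+1)-\lambda}{\sfb}$ is exactly the small algebraic check the paper leaves implicit.
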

\begin{proof} It is obvious that 
under the conditions of this theorem both the conditions of Theorems \ref{thm.UMB} and \ref{thm.LMB} hold. Thus both \eqref{eq.LMB_thm} and \eqref{eq.UMB_thm} hold true. Replacing 
$\hbar$ by \eqref{Cond.G2_3} we see that 
\eqref{eq.UMB_thm} becomes the second inequality
in \eqref{eq.ULMB_thm}. The theorem is then proved. 
\end{proof}


We shall demonstrate that 
\eqref{Cond.G2_3} holds true for 
the Green's function of various partial differential operators:  SHE, $\alpha$-SHE, SWE and SFD  (see \eqref{eq.SHE}, \eqref{eq.SHE_al}, \eqref{eq.SWE} and \eqref{eq.FDiffu} respectively). We summarize the results of that 
section here  in  following table. Notice that Table \ref{Table.LUM} only includes the 
exponent parts of \eqref{eq.ULMB_thm}. 

\renewcommand\arraystretch{2}
\begin{table}[H]
\centering
	\caption{Matching Lower and Upper Moments}
	\scalebox{0.9}{
	\begin{tabular}{lcccc}
	\toprule  
	SPDEs&  ($\sfa$,$\sfb$)&  $\hbar$  & Moment& When $\gamma=2-2H$\\
	\midrule  
	SHE& \emph{(0,2)}& $-\frac{\lambda}{2}$ & $t^{1+\frac{2\cdot(1-\gamma)}{2-\lambda}}\cdot p^{\frac{4-\lambda}{2-\lambda}}$ & $t^{\frac{4H-\lambda}{2-\lambda}}\cdot p^{\frac{4-\lambda}{2-\lambda}}$ \\
	$\alpha$-SHE& \emph{(0,$\alpha$)}& $-\frac{\lambda}{\alpha}$ & $t^{1+\frac{\alpha\cdot(1-\gamma)}{\alpha-\lambda}}\cdot p^{\frac{2\alpha-\lambda}{\alpha-\lambda}}$ & $t^{\frac{2H\alpha-\lambda}{\alpha-\lambda}}\cdot p^{\frac{2\alpha-\lambda}{\alpha-\lambda}}$ \\
	SWE& \emph{(1,1)} & $2-\lambda$  & $t^{1+\frac{1-\gamma}{3-\lambda}}\cdot p^{\frac{4-\lambda}{3-\lambda}}$ & $t^{\frac{2H+2-\lambda}{3-\lambda}}\cdot p^{\frac{4-\lambda}{3-\lambda}}$ \\
	SFD& \emph{($\beta-1$,$\frac{\alpha}{\beta}$)} & $2(\beta-1)-\frac{\lambda\beta}{\alpha}$ & $t^{1+\frac{\alpha(1-\gamma)}{2\alpha\beta-\alpha-\beta\lambda}}\cdot p^{\frac{\beta(2\alpha-\lambda)}{2\alpha\beta-\alpha-\beta\lambda}}$ & $t^{\frac{\alpha(2\beta+2H-2)-\beta\lambda}{2\alpha\beta-\alpha-\beta\lambda}}\cdot p^{\frac{\beta(2\alpha-\lambda)}{2\alpha\beta-\alpha-\beta\lambda}}$ \\
	\bottomrule 
	\label{Table.LUM}
	\end{tabular}}
\end{table}

\section{Upper moment  bounds}\label{s.4}
Our goal of this section is to prove the  upper moment  
bounds assuming that  \ref{A1}, \ref{A3}  hold for the Green's function $G$ 
associated with the operator $\sL$ and  assuming that \ref{H2} and one of 
\ref{H3}-\ref{H5} or one of \ref{H6}-\ref{H7} hold true for the noise covariance structure.

Sometimes it is convenient to use Fourier transformation  to represent the
covariance function in spatial variables.  Assume $\La(x)\ge 0$ for all $x\in \RR^d$ and assume that
there is a measure $\mu$ on $\RR^d$ such that 
\begin{equation}
\La(x)=\int_{\RR^d} e^{\iota x\xi} \mu(d\xi) ,.   
\end{equation}
We now assume some conditions on the Fourier mode that are similar to  \ref{H3}-\ref{H4} and \ref{A3}.  
 \begin{enumerate}[start=2,label=\textbf{(H\arabic*$^\prime$)}] 
	\item\label{H6}    There is a  $\hat \La:\RR^d\rightarrow \RR$ such that 
$\mu(d\xi)=\hat \La(\xi) d\xi$ and there  are constants  $\lambda_j\in(0,1), j=1, \cdots, d$
and $C>0$  such that 
	\[
	 |\hat \La(\xi)|\leq C
	 \prod_{j=1}^d|\xi|^{\la_j-1}\,,\quad \forall \ \xi\in \RR^d\,.
	\]  
	In this case we denote $\la=\la_1+\cdots+\la_d$. 
	\item\label{H7}   There is a  $\hat \La:\RR^d\rightarrow \RR$ such that 
$\mu(d\xi)=\hat \La(\xi) d\xi$ and there  are constants $\lambda \in (0,d)   $ and $C>0$  such that  
	\[
	 |\hat \La(\xi)|\leq C|\xi|^{\la -d}
	 \,,\quad \forall \ \xi\in \RR^d\,. 
	\] 
\end{enumerate}
\begin{enumerate}[start=3,label=\textbf{(G\arabic*$^\prime$)}]
	\item\label{A3'}[\textbf{Majorized property}]  $G_t(\cdot)$ satisfies the \emph{Majorized property ($M (\hbar)$)}. This is, there exists a positive function or measure $Q_t$ such that $G_t(x,y)\leq Q_t(x-y)$ for any $t>0$ and $x,y\in\RR$, and there exist a real number $\hbar>-1$ (the same ones in \ref{A3}) and a constant $C>0$ such that
	\begin{equation}\label{A.F_upper'}
		\sup_{\eta\in\RR^d} \int_{\RR^d}|\hat{Q}_t(\xi-\eta)|^2|\mu|(d\xi) \leq C \cdot t^{\hbar}\,,
	\end{equation}
	where $|\mu|(\xi)=|\hat \La(\xi)| d\xi$ with \ref{H6} or \ref{H7} holds.
\end{enumerate}

\begin{thm}\label{thm.UMB2}
Let  the Green function $G_t(\cdot)$ satisfy \ref{A3'}. Assume 
	 $\gamma(\cdot)$ satisfies (the upper inequality in) \ref{H2} and $\Lambda(\cdot)$ satisfies  one of \ref{H6}-\ref{H7}.  Assume that the initial condition
	 term $I_0(t,x)$ is bounded, namely, 
	 there is a positive constant $C$ such that $\sup_{(t,x)\in 
	 \RR_+\times \in \RR^d}
	 |I_0(t,x)|\le C$.      Then there is a unique   mild solution $u(t,x)$ is satisfying  \eqref{eq.MildSolL}. Moreover,   there are some constants $C_1$ and $C_2$ do not depend on $t$, $p$ and $x$ such that \eqref{eq.UMB_thm} holds.
\end{thm}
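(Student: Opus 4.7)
The plan is to combine the chaos expansion \eqref{eq.WCE} with hypercontractivity and a Fourier-analytic bound on $n!\|f_n\|^2_{\mathcal{H}^{\otimes n}}$ that exploits the majorization hypothesis \ref{A3'}. By Minkowski and the hypercontractivity inequality $\|I_n(f_n)\|_p\leq(p-1)^{n/2}(n!\|f_n\|^2_{\mathcal{H}^{\otimes n}})^{1/2}$, the theorem reduces to establishing the kernel bound
$$
n!\,\|f_n\|^2_{\mathcal{H}^{\otimes n}} \;\leq\; C^n\,t^{n(\hbar+2-\gamma)}/(n!)^{\hbar+1};
$$
summing $\sum_n(p-1)^{n/2}(n!\|f_n\|^2)^{1/2}$ via the Stirling-based asymptotic $\sum_n x^n/(n!)^\alpha\sim\exp(\alpha x^{1/\alpha})$ with $\alpha=(\hbar+1)/2$ then delivers the claimed exponents $1+1/(\hbar+1)$ in $p$ and $1+(1-\gamma)/(\hbar+1)$ in $t$ of \eqref{eq.UMB_thm}. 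Existence and uniqueness of the mild solution follow from the $p=2$ estimate by standard arguments.

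To obtain the target bound on $n!\|f_n\|^2$, I would unfold the symmetrization $f_n=\frac{1}{n!}\sum_\sigma g_n^\sigma$ of the ordered kernel $g_n=G_{t-t_n}\cdots G_{t_2-t_1}\,I_0(t_1,\cdot)\,\1_{\Delta_n}$ with $\Delta_n=\{0<t_1<\cdots<t_n<t\}$, and use a change of variables to reach the permutation-sum representation
$$
n!\,\|f_n\|^2 = \sum_{\pi\in S_n}\int_{\Delta_n^2}\!\!\int_{\RR^{2nd}}\prod_{k=1}^n\gamma(u_k-v_{\pi^{-1}(k)})\Lambda(x_k-y_{\pi^{-1}(k)})\,g_n(\vec u,\vec x)\,g_n(\vec v,\vec y).
$$
Invoking the majorization $G_t(x,y)\leq Q_t(x-y)$ from \ref{A3'} and performing the spatial Fourier transform via \eqref{Prod_H} replaces each $g_n$ by a telescoping product $\prod_j\hat Q_{u_{j+1}-u_j}(\eta_j)$ with $\eta_j=\zeta_1+\cdots+\zeta_j$ (setting $u_{n+1}=t$). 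A Cauchy--Schwarz in the Fourier variable $\vec\zeta$ (the $\pi$-twist in the second factor is absorbed by a measure-preserving relabelling, since $|\mu|^{\otimes n}$ is symmetric in its coordinates) bounds the inner Fourier integral by $\sqrt{A(\vec u)A(\vec v)}$, where $A(\vec u)=\int\prod_j|\hat Q_{u_{j+1}-u_j}(\eta_j)|^2\prod|\mu|(d\zeta_i)$. Iterating \ref{A3'} from $\zeta_n$ downward --- with each earlier partial sum $\eta_{k-1}$ playing the role of the shift $\eta$ in $\sup_\eta\int|\hat Q_t(\xi-\eta)|^2|\mu|(d\xi)\leq Ct^\hbar$ --- yields $A(\vec u)\leq C^n\prod_j(u_{j+1}-u_j)^\hbar$. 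Using $\sqrt{ab}\leq(a+b)/2$ to linearise $\sqrt{A(\vec u)A(\vec v)}$ reduces the problem to controlling
$$
\sum_{\pi\in S_n}\int_{\Delta_n^2}\prod_{k=1}^n\gamma(u_k-v_{\pi^{-1}(k)})\prod_{j=1}^n(u_{j+1}-u_j)^\hbar\,d\vec u\,d\vec v
$$
together with the symmetric expression in $\vec v$.

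The decisive step is to interchange the sum over $\pi$ with the $\vec v$-integral and exploit that the permuted simplices $\{\Delta_\pi:\pi\in S_n\}$ tile $[0,t]^n$ up to a null set. Under the change of variables $w_k=v_{\pi^{-1}(k)}$ we obtain
$$
\sum_\pi\int_{\Delta_n}\prod_k\gamma(u_k-v_{\pi^{-1}(k)})\,d\vec v = \int_{[0,t]^n}\prod_k\gamma(u_k-w_k)\,d\vec w \leq C^n\,t^{n(1-\gamma)}
$$
by \ref{H2}. The remaining Dirichlet integral $\int_{\Delta_n}\prod_j(u_{j+1}-u_j)^\hbar d\vec u=\Gamma(\hbar+1)^n\,t^{n(\hbar+1)}/\Gamma(n(\hbar+1)+1)$ contributes the factor $(n!)^{\hbar+1}$ in the denominator via Stirling (valid since $\hbar+1>0$), giving the target bound on $n!\|f_n\|^2$ and hence \eqref{eq.UMB_thm}.

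The principal difficulty is the telescoping iteration of \ref{A3'}: since the phases $\eta_j=\zeta_1+\cdots+\zeta_j$ share frequency variables across consecutive levels, the integrations must be carried out in reverse order, and the uniformity in the shift $\eta$ built into \ref{A3'} is exactly what lets each step close. A complementary subtlety is the combinatorial cancellation in the permutation sum --- bounding each of the $n!$ terms crudely by Cauchy--Schwarz would produce the wrong denominator $(n!)^\hbar$, and it is only the tiling identity $\bigcup_\pi\Delta_\pi=[0,t]^n$ (disjoint up to measure zero) that restores the sharp exponent $\hbar+1$, so that \eqref{eq.UMB_thm} matches the lower bound \eqref{eq.LMB_thm}.
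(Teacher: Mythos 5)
Your argument is correct, and it reaches the same kernel estimate $n!\,\|f_n\|^2_{\cH^{\otimes n}}\lesssim C^n t^{n(\hbar+2-\gamma)}/(n!)^{\hbar+1}$ and hence \eqref{eq.UMB_thm}, but by a genuinely different route in the temporal part. The paper never expands the permutation sum: it works with the fully symmetrized quantity $\Psi_n(\vec t,\vec s)=\int f_n(\vec t,\cdot)\Lambda f_n(\vec s,\cdot)$, applies Cauchy--Schwarz $\Psi_n(\vec t,\vec s)\le\sqrt{\Psi_n(\vec t,\vec t)\Psi_n(\vec s,\vec s)}$, and then decouples the temporal covariance $\prod_j\gamma(t_j-s_j)$ from the diagonal data using the Hardy--Littlewood--Sobolev (Young) inequality in time, $\int_{[0,t]^{2n}}\prod_j\gamma(t_j-s_j)h(\vec t)h(\vec s)\lesssim(\int|h|^{1/H})^{2H}$ with $2H=2-\gamma$; afterwards it evaluates a single $n!$-weighted Dirichlet integral over the simplex with exponent $\hbar/2H$ and invokes Stirling. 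The contribution of \ref{A3'} in the paper's Step 2 is only to bound the diagonal $\Psi_n(\vec s,\vec s)$ via Plancherel with $|\mu|$ and the iterated use of \eqref{A.F_upper'}, which is precisely your telescoping bound $A(\vec u)\le C^n\prod_j(u_{j+1}-u_j)^\hbar$. Where you depart is in replacing HLS-in-time by the combinatorial observation that $\sum_{\pi}\int_{\Delta_n}\prod_k\gamma(u_k-v_{\pi^{-1}(k)})\,d\vec v=\int_{[0,t]^n}\prod_k\gamma(u_k-w_k)\,d\vec w\le C^nt^{n(1-\gamma)}$, together with the AM--GM linearization $\sqrt{A(\vec u)A(\vec v)}\le\tfrac12(A(\vec u)+A(\vec v))$. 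This is more elementary (it only needs the single-variable estimate $\int_0^t|u-w|^{-\gamma}dw\lesssim t^{1-\gamma}$ from \ref{H2} rather than the multivariate HLS of \cite{HN2009}), and your diagnosis that the factor $(n!)^{\hbar+1}$ in the denominator comes from the tiling of $[0,t]^n$ by the $n!$ permuted simplices, not from a crude term-by-term bound, is exactly right. The two routes trade the analytic HLS lemma against the combinatorial tiling identity and produce identical exponents.
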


\begin{proof}[Proof of Theorem \ref{thm.UMB} and Theorem \ref{thm.UMB2}]
As indicated in \cite{Hu19}, there are mainly three   
approaches to obtain the upper moments, effective in different situations.  In our case, we choose 
to use the approach of combining chaos expansion and hypercontractivity inequality. 
	
\textbf{Step 1:} We shall show the upper bound under assumptions \ref{A3}, \ref{H2} and one of \ref{H3}-\ref{H5}. In the following, we shall only prove the case \ref{H3}. The cases \ref{H4} and \ref{H5} can be done similarly. Recall the Wiener-It\^o  chaos expansion \eqref{eq.WCE} for the mild solution to \eqref{eq.MildSolL'}  
\begin{equation*}
	u(t,x)=I_0(t,x) +\sum_{n=1}^{\infty} I_n(f_n(\cdot,t,x))\,,
\end{equation*}
where $f_n(\cdot, t,x)$ is given by
\eqref{eq.f_n_symm}. Denote $u_n(t, x)=I_n(f_n(\cdot,t,x))$.  
 Then it follows from the It\^o isometry for the multiple Wiener-It\^o  
  integral (e.g. \cite{H2017})   that 
\begin{align*}
	\|u_n(t,x) \|_{L^2}^2  
 	 &=\EE   |I_n(f_n(\cdot;t,x)) |^2 \\
&= n! \|{f}_n (\cdot;t,x)\|^2_{\cH^{\otimes n}} \,. 
\end{align*}
To compute the above norm let us 
denote  $\vec{t}=(t_1,\cdots,t_n)$, $\vec{s}=(s_1,\cdots,s_n)$,  $\vec{x}=(x_1,\cdots,x_n)$, $\vec{y}=(y_1,\cdots,y_n)$  and
\begin{equation*}
	\Psi_{n}(\vec{t},\vec{s}):=\int_{\RR^{2nd}}  {f}_n (\vec{t},\vec{x};t,x) \prod_{j=1}^n \Lambda(x_j-y_j)  {f}_n (\vec{s},\vec{y};t,x)d\vec{x}d\vec{y} \,.
\end{equation*}
Then,   we have
\begin{eqnarray}
&& \|u_n(t,x) \|_{L^2}^2  =
n!\|{f}_n (\cdot;t,x)\|^2_{\cH^{\otimes n}}\nonumber\\
&&\qquad \qquad = \frac{1}{n!}\Phi_n(t):=\frac{c_H^n}{n!} \int_{[0,t]^{2n}} \prod_{j=1}^n \gamma(t_j-s_j) \Psi_{n}(\vec{t},\vec{s}) d\vec{t}d\vec{s}\,. \label{Def.Phi_up}
\end{eqnarray} 
By the Cauchy-Schwarz inequality $\Psi_{n}(\vec{t},\vec{s})\leq  \lk\Psi_{n}(\vec{t},\vec{t}) \Psi_{n}(\vec{s},\vec{s})\rk^{1/2}$ and Hardy-Littlewood-Sobolev inequality \cite[Inequality (2.4)]{HN2009}, we obtain  with $\gamma=2-2H$ (or $H=1-\frac{\ga}{2}$)   from \eqref{Def.Phi_up}
\begin{align*}
	\Phi_n(t) \ls & \int_{[0,t]^{2n}} \prod_{j=1}^n \gamma(t_j-s_j) \lk\Psi_{n}(\vec{t},\vec{t}) \Psi_{n}(\vec{s},\vec{s})\rk^{1/2} d\vec{t}d\vec{s} \\
	\ls &\lc \int_{[0,t]^{n}} |\Psi_{n}(\vec{s},\vec{s})|^{1/H}d\vec{s} \rc^{2H} \,.
\end{align*} 
Now we need to resort to  the key assumption  \ref{A3}, i.e. \emph{Hardy-Littelewood-Sobolev type  mass property} \emph{M$(\hbar)$}   to obtain the bound for $\Psi_n$.  
Repeatedly using \ref{A3} (namely, \eqref{A.F_upper}), we have  
\begin{align}\label{Est.Psi_n}
\begin{split}
	\Psi_{n}(\vec{s},\vec{s}) \leq& \int_{\RR^{2nd}}  {f}_n (\vec{s},\vec{x};t,x) \prod_{j=1}^n \Lambda(x_j-y_j)  {f}_n (\vec{s},\vec{y};t,x) d\vec{x}d\vec{y} \\
	 \ls\,& \prod_{j=1}^n |s_{\sigma(j+1)}-s_{\sigma(j)}|^{\hbar} \1_{\{0<s_{\sigma(1)}<\cdots<s_{\sigma(n)}<t\}}\,,
\end{split}
\end{align}
where $\hbar>-1$ and $0<s_{\sigma(1)}<\cdots<s_{\sigma(n)}<s_{\sigma(n+1)}=t$.  Denote the simplex $\prod_n(t)
=\left\{(s_1, \cdots, s_n)\,; 0<s_1<\cdots<s_n<t\right\}$.  Then, 
using the bound we just obtained for $\Psi_n$, we obtain the upper bound for $\Phi_n(t)$:
\begin{align*}
	\Phi_n(t)\leq& C_H^n \lc n!\int_{\prod_n(t)} \prod_{j=1}^n |s_{j+1}-s_{j}|^{\hbar /2H} d\vec{s} \rc^{2H} \\
	\leq& C_H^n \lc n! \frac{t^{n\hbar /2H+n}}{\Gamma(n\hbar 
	/2H+n+1)}\rc^{2H} \simeq   C_H^n \frac{t^{n(\hbar +2H)}}{(n!)^{\hbar}}\,
\end{align*}
by Stirling's formula for Gamma function.

As a result, the second moment can be estimated as
 \begin{align*}
	\|u_n (t,x)\|_{L^2}^2 =\frac{1}{n!} \Phi_n(t)
	\leq&  C_H^n \frac{t^{n(\hbar+2H)}}{(n!)^{\hbar+1}} 
	 \,.
\end{align*}
It is now easy to bound  the $p$-th moment from the above  second moment bound  by using the hypercontractivity inequality (e.g. \cite[p.54, Theorem 3.20]{H2017})
\begin{align*}
	\|u_n (t,x)\|_{L^p}
	\leq&  (p-1)^{n/2} \|u_n( t,x))\|_{L^2} \\
	\leq&   C_H^n (p-1)^{n/2} \lk\frac{t^{n(\hbar+2H)}}{(n!)^{\hbar+1}}\rk^{1/2} \\
\end{align*}
Thus
\begin{align*}
	\|u  (t,x)\|_{ p}
 \leq& C+\sum_{n=1}^{\infty} \|u_n(t,x))\|_{ p} \\ 
	\leq& C+\sum_{n=1}^\infty   C_H^n (p-1)^{n/2} \lk\frac{t^{n(\hbar +2H)}}{(n!)^{\hbar+1}}\rk^{1/2} \\
 	\leq& C \exp\lc C\cdot t^{\frac{\hbar+2H}{\hbar+1}} 
 	(p-1)^{\frac{1}{\hbar+1}}\rc\,. 
\end{align*}
This means  $\EE[|u(t,x)|^p]\leq C_1 \exp\lc C_2\cdot
 t^{1+\frac{1-\gamma}{\hbar+1}} p^{1+\frac{1}{\hbar+1}}\rc$ 
 for some positive constants $C_1$ and $C_2$ and hence we conclude the proof of  
  Theorem \ref{thm.UMB}.  
  
\textbf{Step 2:} We shall show the upper bound under assumptions \ref{A3'}, \ref{H2} and one of \ref{H6}-\ref{H7}. Denote
\begin{align}\label{eq.f_n_symm_Q}
& f_n^Q(\cdot,t,x):=  {f_n}(t_1,x_1,\cdots,t_n,x_n,t,x)\\
=&\frac{1}{n!} \sum_{\sigma\in S_n} Q_{t-t_{\sigma(n)}}(x-x_{\sigma(n)})Q_{t_{{\sigma(n)}}-t_{{\sigma(n-1)}}}(x_{{\sigma(n)}}-x_{{\sigma(n-1)}})	\cdots \nonumber\\
&\qquad\quad \times Q_{t_{{\sigma(2)}}-t_{{\sigma(1)}}}(x_{{\sigma(2)}}-x_{{\sigma(1)}})I  _0(t_{{\sigma(1)}},x_{{\sigma(1)}}) \,.  
\end{align}
Namely, we replace $G$ in the expression of $f_n(\cdot, t,x)$ by $Q$.  Then by the positivity of $G$,   $\La$,
and the fact that $G\le Q$,   we have
\begin{align*}
	\Psi_{n}(\vec{s},\vec{s}) \ls  & \int_{\RR^{2nd}} f_n (\vec{s},\vec{x};t,x) \prod_{j=1}^n \Lambda(x_j-y_j) f_n (\vec{s},\vec{y};t,x) d\vec{x}d\vec{y} \\
	\ls  & \int_{\RR^{2nd}} f_n^Q (\vec{s},\vec{x};t,x) \prod_{j=1}^n \Lambda(x_j-y_j) f_n^Q (\vec{s},\vec{y};t,x) d\vec{x}d\vec{y} \\
	\ls& \int_{\RR^{nd}} \lt| \cF[f_n (\vec{s},\cdot;t,x)](\xi)\rt|^2 |\mu|(d\vec{\xi})\, \ls\, \prod_{j=1}^n |s_{\sigma(j+1)}-s_{\sigma(j)}|^{\hbar}   \,.
\end{align*}
Thus, we get $\EE[|u(t,x)|^p]\leq C_1 \exp\lc C_2\cdot t^{1+\frac{1-\gamma}{\hbar+1}} p^{1+\frac{1}{\hbar+1}}\rc$ for some positive constants $C_1$ and $C_2$.
\end{proof}

\section{Feynman diagram formula}\label{s.5}
Now we turn to the proof of Theorem \ref{thm.LMB},  i.e., the lower bounds for the moments.  
The main   difficulty    is the lack of the Feynman-Kac formula   for general partial differential operators. 
To get around of this difficulty our strategy is a brutal force one. 
We try to handle the $p$-th moment of $u(t,x)$ directly,  where $p$ is an
arbitrary  positive integer and $u$ is the mild solution to 
\eqref{eq.LDiffu}, given by its chaos  expansion \eqref{eq.WCE}.  
Since the solution is an infinite  sum of multiple Wiener-It\^{o} integrals
so we need first to use   the product formulas of the  
multiple Wiener-It\^{o} integrals (with respect to Gaussian  noise).
This is called the Feynman diagram formulas and they  can be found 
in Theorem 5.7 and Theorem 5.8 in \cite{H2017}      
  (for general Gaussian noise case), Theorem 10.2 in \cite{M2013}
 or Theorem 5.3 in \cite{M2014} 
 (for White noise cases).  In this section we shall present this formula ``graphically"  so that we can keep track the  terms. 

Recall that the Gaussian space  $\cH$ in our situation is the   Hilbert space 
obtained by  the completion of $\cD(\R+\times\RR^{d})$ with 
respect to the scalar product defined by \eqref{Prod_H}.
Since the work of    \cite{M2013}
 or   \cite{M2014}  are for the ``White noise" case, we will follow
  the product formula of  \cite[Theorem 5.7]{H2017}.  Since we are only
interested in the expectation of the product of   multiple integrals
and since $\EE \left[I_k(f)\right] =0$ for all  $k\ge 1$ we only need to sum the terms
with  $|\ga|=0$ in  
\cite[Theorem 5.7]{H2017} when we take 
the expectation of the left hand side of \cite[Equation 5.3.5]{H2017} (The notation $\ga$ used
in \cite{H2017} is different than the one used in this paper).

To visualize these summation terms   graphically, we recall the 
concept of diagram associated with only these terms.   
A  Feynman diagram $D$ is a set  of some vertices and some edges 
connecting them so that the vertices are arranged into some 
finite  rows and each row contains some finite many vertices. 
The set of vertices of the diagram
$D$ can then be represented by  $\sV(\cD)=\{(k,r):1\le k\le m, 1\leq r \leq n_m\}$. 
We use $\sE(\cD)=\{[(\overline{k},\overline{r}),(\underline{k},\underline{r})]: 
\overline{k}<\underline{k}\}$ denote the set of all edges of a diagram $\cD$, 
where $\overline{k}<\underline{k}$ means 
$(\overline{k},\overline{r})$ is the upper  (row) 
and $(\underline{k},\underline{r})$ is the lower (row) end point of a edge. 
The strict inequality is important here since two vertices 
in the same row are not allowed to form an edge. 
For an edge $[(\overline{k},\overline{r}),(\underline{k},\underline{r})]\in 
\sE(\cD)$, we call $(\overline{k},\overline{r})$ the upper vertice and 
$(\underline{k},\underline{r})$  the lower vertice of the edge 
 and   we call a vertice associates with an  edge if it is either upper
 or  lower vertice of the edge.  
We use $\overline{\sV}(\cD)$ and $\underline{\sV}(\cD)$ 
to the sets of all upper and lower vertices, respectively.  We require that one vertice
associates with at most one edge. Thus
we have $\overline{\sV}(\cD)\cap \underline{\sV}(\cD)=\emptyset$.  
After taking the expectation of 
\cite[Equation 5.3.5]{H2017}, the terms will be significantly reduced. To account the remaining terms  we only need to consider the 
 following special diagrams. 
\begin{defn}  A diagram $\cD=(\sV(\cD), \sE(\cD))$ is called \emph{admissible} if  every 
vertice is associated with one and only one edge. The set of all \emph{admissible diagrams} associated with the vertices $\left\{(k, r), 1\le k\le m, 1\le r\le n_k\right\}$ 
is denoted by $\DD(n_1, \cdots, n_m)$.  
\end{defn}
It is clear that if a diagram  $\cD$ is admissible then 
$n_1+\cdots+n_m=2|\sE(\cD)|$, in particular, 
$n_1+\cdots+n_m$ is an even integer.  

 
Let $f_k:(\RR_+\times \RR^d) ^{n_k}\rightarrow \RR$, $k=1, \cdots, m$ 
be some given measurable functions. 
Associated with these functions we   
have naturally  the set of Feynman diagrams   
 $\tilde \DD(f_1, \cdots, f_m)$. The correspondence  is described  as follows.
 Each Feynman diagram 
 $\cD\in \tilde \DD(f_1, \cdots, f_m) $ contains 
 $m$ rows, corresponding to $f_1, \cdots, f_m$, and 
 the $k$-th row
 of $\cD$ contains $n_k$ vertices,  which is the number of independent variables of the function $f_{k}$.     
We use  $\DD(f_1, \cdots, f_m)$ 
to denote the set of all admissible Feynman diagrams
associated with $f_1, \cdots, f_m$. 

For the sake of convenience we consider $(t,x)$ as 
one vector independent variable, where $t\ge 0, x\in \RR^d$.  
So  we shall say that $f_k:(\RR_+\times \RR^d)^{n_k}\rightarrow \RR$ has $n_k$
independent (vector) variables.  
From the functions $f_k:(\RR_+\times \RR^d)^{n_k}\rightarrow \RR$, $k=1, \cdots, m$,
we define their concatenation 
$f_1\circ \cdots \circ f_m$ as a function of $n_1+\cdots+n_m$ 
independent vector variables. We name the $n_k$ independent variables of the 
function $f_k$ by $(t_{(k,1)},x_{(k, 1)}), \cdots, (t_{(k,_{n_k})},x_{(k, n_k)})$, 
associated with the $k$-th row vertices.   Thus for an  
admissible Feynman diagram  $\cD\in   \DD(f_1, \cdots, f_m) $, 
the concatenation $f_1\circ \cdots \circ f_m$ is a function of $
n_1+\cdots+n_m$ vector variables and we write  it as 
\[
f_1\circ \cdots \circ f_m((t_{\overline{\sV}(\cD)}, x_{\overline{\sV}(\cD)}),
(t_{\underline{\sV}(\cD)}, x_{\underline{\sV}(\cD)}))\,.
\] 
The edges in $\cD\in   \DD(f_1, \cdots, f_m) $
are  used to  form the (tensor) scalar product in the Gaussian space $\cH$ of the above concatenation.  Here is the detail of this construction. 
If $[(\overline{k},\overline{r}),(\underline{k},\underline{r})]$ 
is an edge of the diagram $\cD$,  then we form a  factor
\[
 \gamma(t_{(\overline{k},\overline{r})}-t_{(\underline{k},\underline{r})})
 \Lambda(x_{(\overline{k},\overline{r})}-  x_{(\underline{k},\underline{r})})
\,.
\]
For the set of $\sE(\cD)$, we denote the product of all above factors   as
\begin{eqnarray}
&&\gamma(t_{\overline{\sV}(\cD)}-t_{\underline{\sV}(\cD)} )
 \Lambda(x_{\overline{\sV}(\cD)}-x_{\underline{\sV}(\cD)}) \nonumber\\
&&\qquad   =\prod_{[(\overline{k},\overline{r}),(\underline{k},\underline{r})]\in 
\sE(\cD)}\gamma(t_{(\overline{k},\overline{r})}-t_{(\underline{k},\underline{r})})
 \Lambda(x_{(\overline{k},\overline{r})}-  x_{(\underline{k},\underline{r})})
\,.\label{e.5.1} 
\end{eqnarray}

With these notations,   we define  finally a  real number
associated with  $f_1, \cdots, f_m$ and associated with an admissible diagram $\cD$ as follows: 
\begin{align}\label{eq.FDiag}
	F_{\cD}(f_1,&\cdots,f_m) =\int   f_1\circ \cdots \circ 
	f_m((t_{\overline{\sV}(\cD)}, x_{\overline{\sV}(\cD)}),
(t_{\underline{\sV}(\cD)}, x_{\underline{\sV}(\cD)})) \nonumber\\
&
\gamma(t_{\overline{\sV}(\cD)}-t_{\underline{\sV}(\cD)} )
 \Lambda(x_{\overline{\sV}(\cD)}-x_{\underline{\sV}(\cD)})
 dt_{\overline{\sV}(\cD)}
 dt_{\underline{\sV}(\cD)}  dx_{\overline{\sV}(\cD)}
 dx_{\underline{\sV}(\cD)}\,. 
\end{align}
To illustrate  the above notation, we give one example. 
\begin{exmp}\label{Exmp.white}
Given three functions of four independent of variables. 
Let us take the following admissible  diagram $\cD\in {\bD}(4,4,4)$ 
in Figure \ref{Fig.1} as an example. 
The upper vertices are colored in red and the lower vertices are colored in blue.
 The upper and lower  variables are gives as follows: 
\begin{align*}
	x_{\overline{\sV}(\cD)}=\,&\{(1,1), (1,2), (1,3), (1,4), (2,1), (2,4)\}\,, \\
	x_{\underline{\sV}(\cD)}=\,&\{(2,2), (2,3), (3,1), (3,2), (3,3), (3,4)\}\,,
\end{align*}
The corresponding set of edges of this diagram     is
\begin{align*}
\sE(\cD) =&\{[(1,1), (3,1)], [(1,2), (2,3)],  [(1,3),((3,4)],\\ 
&\qquad  [(1,4), (2,2)], [(2,1), (3,2)], [(2,4), (3,3)]
 \}\,. 
\end{align*} 
In this case, $n_1=n_2=n_3=4$.  It is easy to see that 
$|\sE(\cD)|=6=(n_1+n_2+n_3)/2$ and 
\begin{align*}
	F_{\cD}(f_1, f_2,&f_3) \\ 
	=&\int f_1(
	(t_{(1,1)}, x_{(1,1)}),(t_{(1,2)}, x_{(1,2)}),(t_{(1,3)}, x_{(1,3)}),(t_{(1,4)}, x_{(1,4)}))
	\\
	&\qquad \cdot 
	f_2(
	(t_{(2,1)}, x_{(2,1)}),(t_{(2,2)}, x_{(2,2)}),(t_{(2,3)}, x_{(2,3)}),(t_{(2,4)}, x_{(2,4)}))
	\\
&\qquad \cdot f_3(
	(t_{(3,1)}, x_{(3,1)}),(t_{(3,2)}, x_{(3,2)}),(t_{(3,3)}, x_{(3,3)}),(t_{(3,4)}, x_{(3,4)}))
	\\ 
	&\qquad \cdot \ga (t_{(1,1)}-t_{(3,1)})\ga (t_{(1,2)}-t_{(2,3)})\ga (t_{(1,3)}-t_{(3,4)})\\
	&\qquad \cdot \ga (t_{(1,4)}-t_{(2,2)})\ga (t_{(2,1)}-t_{(3,2)})\ga (t_{(2,4)}-t_{(3,3)})
	\\ 
	&\qquad \cdot \La(x_{(1,1)}-x_{(3,1)})\La (x_{(1,2)}-x_{(2,3)})\La(x_{(1,3)}-x_{(3,4)})\\
	&\qquad \cdot \La (x_{(1,4)}-x_{(2,2)})\La (x_{(2,1)}-x_{(3,2)})\La (x_{(2,4)}-x_{(3,3)}) dtdx\,, 
\end{align*}
where  $dt=dt_{(1,1)} \cdots dt_{(3,4)}$ and similar notation for $dx$.   
Notice that the Feynman diagrams
are used to track the terms and to provide guidance   for
the variables inside  $\ga$ and $\La$.  

Let us also notice that the operation $F_\cD$ can also be defined 
for any $f_k\in \cH^{\otimes n_k}$, $k=1, \cdots, m$, 
which may contain  measures or generalized functions.  

\begin{figure}[htb]
\centering
\begin{tikzpicture}[scale=2.8]
\tikzstyle{vertex}=[circle, black, inner sep=0pt, minimum size=5pt]
      \foreach \i in {1,...,4}
      {
         \path (\i,0) coordinate (X\i);
         \node[vertex,fill=blue] (X\i) at (\i,0) [label=left:$t_{(3,\i)}$] [label=right:$x_{(3,\i)}$] {};
      }
      \foreach \j in {1,...,4}
      {
         \path (\j,1) coordinate (Y\j);
      }
      \node[vertex,fill=red] (Y1) at (1,1) [label=left:$t_{(2,1)}$] [label=right:$x_{(2,1)}$] {};
      \node[vertex,fill=blue] (Y2) at (2,1) [label=left:$t_{(2,2)}$] [label=right:$x_{(2,2)}$] {};
      \node[vertex,fill=blue] (Y3) at (3,1) [label=left:$t_{(2,3)}$] [label=right:$x_{(2,3)}$] {};
      \node[vertex,fill=red] (Y4) at (4,1) [label=left:$t_{(2,4)}$] [label=right:$x_{(2,4)}$] {};
      \foreach \k in {1,...,4}
      {
         \path (\k,2) coordinate (Z\k);
		 \node[vertex,fill=red] (Z\k) at (\k,2) [label=left:$t_{(1,\k)}$] [label=right:$x_{(1,\k)}$] {};
      }
      \path
       (Z1) edge[bend right=20] (X1) (Z2) edge (Y3) (Z3) edge (X4) (Z4) edge (Y2)
       (Y1) edge (X2) (Y4) edge (X3);
\end{tikzpicture}
\caption{An example the admissible diagram}
\label{Fig.1}
\end{figure}
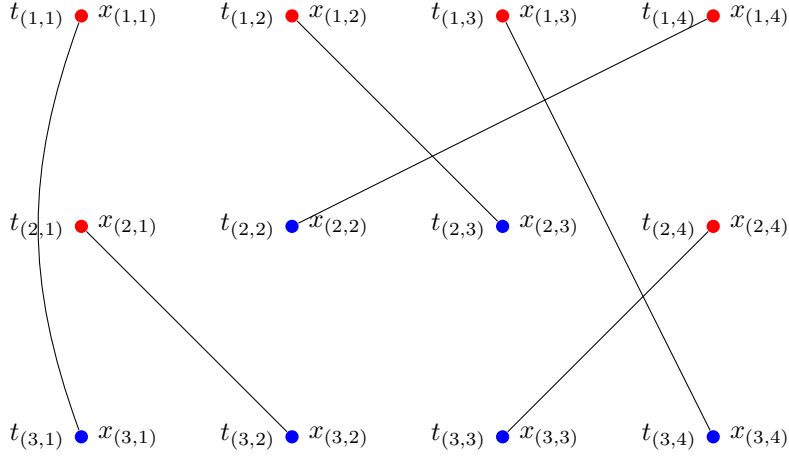
\end{exmp}

\begin{thm}\label{Thm.EofMWI_color}
	For $f_1\in \sH^{\otimes n_1}$,\dots, $f_m\in \sH^{\otimes n_m}$, we have 
	\begin{equation}\label{eq.EE_Diag}
		\EE[I _{n_1}(f_1)\cdots I _{n_m}(f_m)]=\sum_{\cD\in  {\bD}(f_1,\cdots,f_m)} F  _{\cD}(f_1,\cdots,f_m)\,.
	\end{equation}
where $F _{\cD}(f_1,\cdots,f_m)$ is given by
\eqref{eq.FDiag}. 
\end{thm}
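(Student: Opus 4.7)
The plan is to reduce the claim to the general product formula for multiple Wiener--It\^o integrals stated in \cite[Theorem 5.7]{H2017}, and then identify which terms survive after taking expectation. That result expresses the product $I_{n_1}(f_1) \cdots I_{n_m}(f_m)$ as a sum indexed by \emph{all} (not necessarily admissible) Feynman diagrams $\cD'$ on the vertex set $\{(k,r) : 1 \le k \le m,\ 1 \le r \le n_k\}$, with edges only allowed between vertices in distinct rows. Each such $\cD'$ contributes a term of the form $c_{\cD'}\, I_{q_{\cD'}}(h_{\cD'})$, where $q_{\cD'} := n_1 + \cdots + n_m - 2|\sE(\cD')|$ counts uncontracted vertices, $h_{\cD'}$ is the function of the free variables obtained by pairing each edge of $\cD'$ via the inner product $\langle\cdot,\cdot\rangle_\cH$ in \eqref{Prod_H}, and $c_{\cD'}$ is an explicit combinatorial coefficient.

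First I would take expectation on both sides of this identity. Because $\EE[I_k(h)] = 0$ for every $k \ge 1$ and every $h$, while $I_0$ acts as the identity on constants, only those diagrams with $q_{\cD'} = 0$ survive. The condition $q_{\cD'} = 0$ says precisely that every vertex is an endpoint of exactly one edge, which is the definition of an \emph{admissible} diagram $\cD \in \bD(f_1, \ldots, f_m)$; it also forces $n_1+\cdots+n_m$ to be even, else both sides of \eqref{eq.EE_Diag} vanish.

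Next I would identify $c_\cD\, h_\cD$ with $F_\cD(f_1, \ldots, f_m)$ for admissible $\cD$. Since the $f_k$ are symmetric in their variables, the combinatorial weight $c_\cD$ cancels against the different orderings of the variables within each row, leaving one representative pairing per admissible diagram. Each edge $[(\overline k,\overline r),(\underline k,\underline r)]$ pairs the $\overline r$-th variable of $f_{\overline k}$ with the $\underline r$-th variable of $f_{\underline k}$ through $\langle\cdot,\cdot\rangle_\cH$, and by \eqref{Prod_H} this contributes exactly the factor $\gamma(t_{(\overline k,\overline r)}-t_{(\underline k,\underline r)})\, \Lambda(x_{(\overline k,\overline r)}-x_{(\underline k,\underline r)})$ appearing in \eqref{e.5.1}. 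Integrating the resulting product of edge weights against the concatenation $f_1 \circ \cdots \circ f_m$ over all variables yields precisely the scalar $F_\cD$ in \eqref{eq.FDiag}.

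The main obstacle is the combinatorial bookkeeping just described, namely showing that the coefficients of \cite[Theorem 5.7]{H2017} combine with the symmetrization implicit in the notation $I_{n_k}(f_k)$ to produce exactly one copy of $F_\cD$ per admissible diagram, with no surplus factors. A clean alternative that side--steps this bookkeeping is induction on $m$: the base case $m=2$ is the classical isometry $\EE[I_p(f) I_q(g)] = \delta_{p,q}\, p!\, \langle f,g \rangle_{\cH^{\otimes p}}$, whose $p!$ factor exactly enumerates the $p!$ perfect matchings between the two rows; the inductive step applies the two--factor product formula
\[
I_{n_{m-1}}(f_{m-1})\, I_{n_m}(f_m) = \sum_{r=0}^{n_{m-1}\wedge n_m} r!\binom{n_{m-1}}{r}\binom{n_m}{r}\, I_{n_{m-1}+n_m-2r}(f_{m-1} \otimes_r f_m)
\]
to merge the last two factors, where the $r$--fold contraction $\otimes_r$ precisely corresponds to selecting $r$ edges between the last two rows of the diagram, and the induction hypothesis then handles the remaining $m-1$ factors.
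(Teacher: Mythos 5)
Your proposal is correct and follows essentially the same route as the paper: the paper's proof is precisely to take the expectation of \cite[Equation 5.3.5]{H2017} in \cite[Theorem 5.7]{H2017}, observe that only the fully-contracted (admissible) terms survive since $\EE[I_k(h)]=0$ for $k\ge 1$, and identify each surviving contraction with $F_\cD$ via the inner product \eqref{Prod_H}. Your additional remarks on the combinatorial bookkeeping and the alternative induction via the two-factor contraction formula are sound but go beyond what the paper records.
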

\begin{proof}
This theorem is a consequence of 
\cite[Theorem 5.7]{H2017} when we take the expectation of \cite[Equation 5.3.5]{H2017} and notice that now the scalar product  of  $\cH$ is defined by  \eqref{Prod_H}.     
\end{proof}

%
%

If we apply the above formula to the $f_n$ defined by \eqref{eq.f_n_symm}, we have 
\begin{thm}\label{cor.EE_Diag}
Let  $ {f_n}(\cdot,t,x)$ be defined by \eqref{eq.f_n_symm}
and let  $I _n( {f_n}(\cdot,t,x))$ be the associated multiple Wiener-It\^o integral. 
Then 
	\begin{equation} 
		\begin{split}
			\EE \Blk I _{n_1}( &{f_{n_1}}(\cdot,t,x))\cdots I _{n_m}( {f_{n_m}}(\cdot,t,x))\Brk \\
		=&\sum_{\cD\in  {\bD({f_{n_1}} ,\cdots, {f_{n_m}})}} 
		F _{\cD}\lt(  {f_{n_1}} ,\cdots, {f_{n_m}}  \rt)  \\
		=&\sum_{\cD\in   \bD({f_{n_1}} ,\cdots, {f_{n_m}}) }\int \prod_{j=1}^{m}\prod_{r=1}^{n_j} 
		G_{t_{(j,r+1)}-t_{(j,r)} }\lt(x_{(j,r+1)}-x_{(j,r)}\rt) \1_{\{0<t_{ (j, 1)}<
		\cdots< t_{ (j, n_j)}\}}  \\
		&\qquad\qquad\qquad\qquad \times\gamma\Blc t_{\overline{\sV}(\cD)}-t_{\underline{\sV}(\cD)}\Brc \Lambda\Blc x_{\overline{\sV}(\cD)}-x_{\underline{\sV}(\cD)}\Brc dt_{\cD}dx_{\cD}\,, \label{eq.EE_Diag.G}
		\end{split}
	\end{equation}
where we use  the notations $t_{(j,n_j+1)}=t$ and $x_{(j,n_j+1)}=x$ for all $1\leq j\leq m$
and  $\gamma(\cdot)$ and $\Lambda(\cdot)$ are defined as \eqref{e.5.1}.
\end{thm}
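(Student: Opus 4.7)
The plan is to obtain the first equality as a direct consequence of Theorem~\ref{Thm.EofMWI_color}, and to derive the second by substituting the explicit form of $f_{n_j}$ from \eqref{eq.f_n_symm} into $F_\cD$ and unfolding the symmetrization through a row-by-row change of integration variables.

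For the first equality, I would apply Theorem~\ref{Thm.EofMWI_color} directly with the choice $f_j := f_{n_j}(\cdot,t,x) \in \cH^{\otimes n_j}$; square-integrability of these kernels under \ref{H2} and \ref{A3} is established in Section~\ref{s.4}, which justifies invoking the formula.

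For the second equality, substituting \eqref{eq.f_n_symm} into each slot of $F_\cD$ yields
\[
F_\cD(f_{n_1}, \ldots, f_{n_m}) = \frac{1}{n_1!\cdots n_m!} \sum_{\sigma_1 \in S_{n_1}}\cdots\sum_{\sigma_m \in S_{n_m}} F_\cD\bigl(\tilde{f}_{n_1}^{\sigma_1}, \ldots, \tilde{f}_{n_m}^{\sigma_m}\bigr),
\]
where $\tilde{f}_{n_j}^{\sigma_j}$ denotes the single $\sigma_j$-summand of \eqref{eq.f_n_symm}, namely the ordered Green's-function chain carrying the indicator $\mathbf{1}_{\{0<t_{(j,\sigma_j(1))}<\cdots<t_{(j,\sigma_j(n_j))}<t\}}$. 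Inside the integral defining $F_\cD$ I would perform the change of variables $(t_{(j,r)},x_{(j,r)}) \mapsto (t_{(j,\sigma_j(r))}, x_{(j,\sigma_j(r))})$ row by row. The Lebesgue measure is invariant; each $\tilde{f}_{n_j}^{\sigma_j}$ reverts to $\tilde{f}_{n_j}^{\mathrm{id}}$; and each edge $[(k_1,r_1),(k_2,r_2)]$ of $\cD$ is carried to $[(k_1,\sigma_{k_1}^{-1}(r_1)),(k_2,\sigma_{k_2}^{-1}(r_2))]$, i.e.\ $\cD$ is replaced by a row-relabeled admissible diagram $\cD_\sigma$. Since $\cD \mapsto \cD_\sigma$ is a bijection on $\bD(f_{n_1},\ldots,f_{n_m})$ for each fixed $\sigma$, swapping the summations and reindexing collapses $\sum_\cD \sum_\sigma$ into $n_1!\cdots n_m! \sum_{\cD'}$, exactly cancelling the symmetrization normalization. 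Writing out the chained Green's functions in $\tilde{f}_{n_j}^{\mathrm{id}}$ together with the edge factors \eqref{e.5.1} then produces the right-hand side of \eqref{eq.EE_Diag.G}.

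The main bookkeeping obstacle is the change of variables: one must verify that the upper/lower role of each vertex is preserved (automatic, since only the within-row index $r$ is permuted while the row index $k$ is untouched, so $\overline{\sV}(\cD)$ and $\underline{\sV}(\cD)$ are mapped to $\overline{\sV}(\cD_\sigma)$ and $\underline{\sV}(\cD_\sigma)$ respectively), that the factors $\gamma$ and $\Lambda$ attached to each edge transform consistently with the relabeling, and that the permuted indicator $\mathbf{1}_{\{0<t_{(j,\sigma_j(1))}<\cdots<t_{(j,\sigma_j(n_j))}<t\}}$ becomes the canonical $\mathbf{1}_{\{0<t_{(j,1)}<\cdots<t_{(j,n_j)}<t\}}$ after renaming. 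Once this combinatorial identity is in hand, the theorem follows by direct substitution.
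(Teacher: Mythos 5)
Your proposal is correct and follows essentially the same route as the paper: the first equality is read off from Theorem \ref{Thm.EofMWI_color}, and the second is obtained by expanding the symmetrization in \eqref{eq.f_n_symm} and using the bijection $\cD\mapsto\cD_\sigma$ on admissible diagrams induced by within-row permutations to cancel the $1/n_j!$ factors. The only cosmetic difference is that you handle all rows simultaneously with a product of permutations, whereas the paper unsymmetrizes one row at a time and closes by induction.
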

\begin{proof}
%
We only need to prove the second equality in \eqref{eq.EE_Diag.G}.  We may only consider the time variable without loss of generality (i.e. $d=0$). Namely, we reduce  the symmetric function $ f_{n} (t_1,x_1,\cdots,
t_n,x_n;t,x)$ to  the symmetric function   $ f_{n} (t_1,\cdots,t_n;t)$. 
Then what we need to show is the following equality for any $n_1,\dots,n_m$ and for any corresponding 
	admissible  diagrams $ {\bD}$, \eqref{eq.EE_Diag.G}   holds true.  
	We shall prove  \eqref{eq.EE_Diag.G}   recursively on $n$.
	Denote the function of \eqref{eq.f_n} by  $f_{n} (t_1,\cdots,t_n;t)$ and its symmetrization by $\tilde f_{n} (t_1,\cdots,t_n;t)$.  Then 
	\begin{align}\label{eq.EE_Diag.G1}
		&\sum_{\cD\in  \bD({\tilde f_{n_1}} ,\cdots, {\tilde f_{n_m}}) } F_{\cD}\lt(    \tilde f_{n_1} (\cdot,t),\cdots, {\tilde  f_{n_m}}(\cdot,t) \rt)  \nonumber\\
		=\,&\sum_{\cD\in  \bD({f_{n_1}} ,\cdots, {f_{n_m}})}\int \prod_{j=1}^{m} {\tilde f_{n_j}}(t_{(j,1)},\cdots,t_{(j,n_j)};t) \times\gamma\Blc t_{\overline{\sV}(\cD)}-t_{\underline{\sV}(\cD)}\Brc  dt_{\cD} \nonumber\\
		=\,&\sum_{\cD\in  \bD({f_{n_1}} ,\cdots, {f_{n_m}})}  \frac{1}{n_1!}\sum_{\sigma\in S_{n_1}} 
		 \int  {f}_{n_1} (t_{ (1,\sigma(1))},\cdots,t_{(1,\sigma(n_1))};t)\nonumber \\
		&\qquad\qquad\qquad\quad \cdot \prod_{j=2}^{m}  {\tilde f_{n_j}}(t_{(j,1)},\cdots,t_{(j,n_j)};t) \times\gamma\Blc t_{\overline{\sV}(\cD)}-t_{\underline{\sV}(\cD)}\Brc  dt_{\cD} \nonumber\\
		=\,& \sum_{\cD\in  \bD({f_{n_1}} ,\cdots, {f_{n_m}})}  \frac{1}{n_1!}\sum_{\sigma\in S_{n_1}} 
		{I_{\si(1), \cdots, \si(n_1), \cD}}\,, 
	\end{align} 
	where $S_{n_1}$ denotes the set of all permutations of $\{1, 2, \cdots, n_1\}$
	and $I_{\si(1), \cdots, \si(n_1), \cD}$ denotes the above integral. 
	Suppose that  $\cD\in  \bD({f_{n_1}} ,\cdots, {f_{n_m}})$ is a Feynman diagram.
Then there there are $(j_1, r_1), \cdots, (j_{n_1}, r_{n_1})$ such that the following edges
\[
[(1,1), (j_1, r_1)], \cdots, [(1, n_1), (j_{n_1}, r_{n_1})]  
\]
are in $\sE$. In this diagram $\cD$, we replace the above edges by the following ones 
\[
[(1,\si(1)), (j_1, r_1)], \cdots, [(1, \si(n_1)), (j_{n_1}, r_{n_1})]  
\]
and retain all other edges unchanged. Then we obtain another diagram  $\cD_\si$.  
See   Figure \ref{Fig.1_1} for an illustrating  example.
\begin{figure}[htb]
\begin{minipage}{.5\textwidth}
\centering
\begin{tikzpicture}[scale=1.2]
\tikzstyle{vertex}=[circle, draw, inner sep=0pt, minimum size=4pt]
      \foreach \i in {1,...,4}
      {
         \path (\i,0) coordinate (X\i);
         \vertex[fill] (X\i) at (\i,0) [label=right:$t^{(3)}_{\i}$] {};
      }
      \foreach \j in {1,...,4}
      {
         \path (\j,1) coordinate (Y\j);
		 \vertex[fill] (Y\j) at (\j,1) [label=right:$t^{(2)}_{\j}$] {};
      }
      \foreach \k in {1,...,4}
      {
         \path (\k,2) coordinate (Z\k);
		 \vertex[fill] (Z\k) at (\k,2) [label=right:$t^{1}_{\k}$] {};
      }
      \path
       (Z1) edge[bend right=20] (X1) (Z2) edge (Y3) (Z3) edge[bend left=15] (X4) (Z4) edge[bend right=10] (Y2)
       (Y1) edge (X2) (Y4) edge (X3);
\end{tikzpicture}
\end{minipage}%
\begin{minipage}{.5\textwidth}
\centering
\begin{tikzpicture}[scale=1.2]
\tikzstyle{vertex}=[circle, draw, inner sep=0pt, minimum size=4pt]
      \foreach \i in {1,...,4}
      {
         \path (\i,0) coordinate (X\i);
         \vertex[fill] (X\i) at (\i,0) [label=right:$t^{(3)}_{\i}$] {};
      }
      \foreach \j in {1,...,4}
      {
         \path (\j,1) coordinate (Y\j);
		 \vertex[fill] (Y\j) at (\j,1) [label=right:$t^{(2)}_{\j}$] {};
      }
      \foreach \k in {1,...,4}
      {
         \path (\k,2) coordinate (Z\k);
		 \vertex[fill] (Z\k) at (\k,2) [label=right:$t^{1}_{\sigma(\k)}$] {};
      }
      \path
       (Z2) edge[bend left=15] (X1) (Z1) edge (Y3) (Z4) edge[bend left=15] (X4) (Z3) edge (Y2)
       (Y1) edge (X2) (Y4) edge (X3);
\end{tikzpicture}
\end{minipage}%
\caption{The left is $\sigma=\{1,2,3,4\}$ and the right is $\sigma=\{2,1,4,3\}$}
\label{Fig.1_1}
\end{figure}
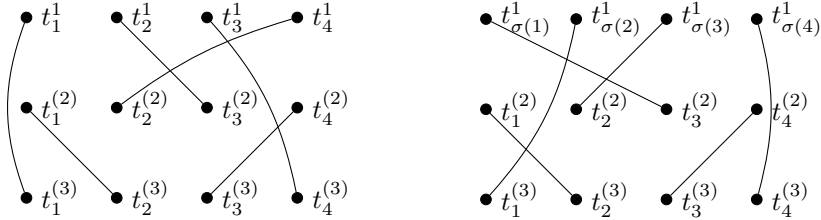
This transformation $\cD\to \cD_\si$ has the following properties.
\begin{enumerate}
\item[(i)] If $\cD$ is an admissible diagram, so is $\cD_\si$.
\item[(ii)] For any fixed  permutation $\si$, the mapping  $\cD\to \cD_\si$ is a bijection  from
$\bD(f_{n_1}, \cdots, f_{n_m})$ to itself.
\item[(iii)]  $\gamma\Blc t_{\overline{\sV}(\cD_{\sigma})}-t_{\underline{\sV}(\cD_{\sigma})}\Brc$ 
remains  unchanged: 
	\[
	 \gamma\Blc t_{\overline{\sV}(\cD_{\sigma})}-t_{\underline{\sV}(\cD_{\sigma})}\Brc=
	 \gamma\Blc t_{\overline{\sV}(\cD )}-t_{\underline{\sV}(\cD )}\Brc\,. 
	\]
\end{enumerate}
These properties imply 
\[
 \sum_{\cD\in  \bD({f_{n_1}} ,\cdots, {f_{n_m}})}    
		I_{\si(1), \cdots, \si(n_1), \cD}
		=
		 \sum_{\cD\in  \bD({f_{n_1}} ,\cdots, {f_{n_m}})}    
		I_{1, \cdots, n_1, \cD}
\,.
\]
Substituting it to   \eqref{eq.EE_Diag.G1}  we have
\begin{align}\label{eq.EE_Diag.G1}
		&\sum_{\cD\in  \bD({\tilde f_{n_1}} ,\cdots, {\tilde f_{n_m}}) } F_{\cD}\lt(  {\tilde f_{n_1}}(\cdot,t),\cdots, {\tilde f_{n_m}}(\cdot,t) \rt)  \nonumber\\ 
		=\,&\sum_{\cD\in  \bD({f_{n_1}} ,\cdots, {f_{n_m}})}\int   f_{n_1 }(t_{(1,1)},\cdots,t_{(1,n_1)};t) \\
		&\qquad\qquad\qquad\qquad \cdot \prod_{j=2}^{m} {\tilde f_{n_j}}(t_{(j,1)},\cdots,t_{(j,n_j)};t) \times\gamma\Blc t_{\overline{\sV}(\cD)}-t_{\underline{\sV}(\cD)}\Brc  dt_{\cD}\,. \nonumber
	\end{align} 
%
This can be used to prove  the theorem by induction. 
\end{proof}

\begin{exmp}  
The above formula \eqref{eq.EE_Diag.G} can be used to compute all moments
of a chaos expansion.  This will be done in the next section when we prove the 
lower moment bounds. 
As an example, it is interesting to consider the second moment. 
By orthogonality of multiple 
	Wiener-It\^o  chaos expansion, we have
	\[
	 \EE[|u(t,x)|^2]=1+\sum_{n=1}^{\infty}\EE\lt[|I _{n}( {f_{n}})|^2\rt]\,.
	\]
	Then by \eqref{eq.EE_Diag.G}  in Theorem  \ref{cor.EE_Diag}, one   finds 
	\begin{align}\label{eq.EE_Diag.2nd}
		\EE\Big[|I _{n}&(  {f_{n}})|^2\Big] \nonumber \\
		=&\sum_{\cD\in  {\bD}(n,n)}\int \prod_{j=1}^{2}\prod_{r=1}^{n} G_{t^{(j)}_{r+1}-t^{(j)}_{r}}\lt(x^{(j)}_{r+1}-x^{(j)}_{r}\rt) \1_{\{0<t^{(j)}_{1}<\cdots<t^{(j)}_{n}<t\}} \\
		&\qquad\qquad\qquad \times\gamma\Blc t_{\overline{\sV}(\cD)}-t_{\underline{\sV}(\cD)}\Brc \Lambda\Blc x_{\overline{\sV}(\cD)}-x_{\underline{\sV}(\cD)}\Brc dt_{\cD}dx_{\cD}\,.\nonumber
	\end{align} 
\end{exmp}	
	An example of admissible  diagram $\cD\in  {\bD}(4,4)$ can be illustrated in the Figure \ref{Fig.2_1}. In this diagram, we have $T_{\overline{\sV}(\cD)}:=(t_{\overline{\sV}(\cD)},x_{\overline{\sV}(\cD)})=\{T^{(2)}_{j}:1\leq j\leq 4\}$ colored  in red, $T_{\underline{\sV}(\cD)}:=(t_{\underline{\sV}(\cD)},x_{\underline{\sV}(\cD)})=\{T^{1}_{j}:1\leq j\leq 4\}$ colored  in blue. Moreover, 
\begin{align*}
  \gamma\Blc t_{\overline{\sV}(\cD)}-t_{\underline{\sV}(\cD)}\Brc :=\, &\gamma\Blc t^{(2)}_{1}-t^{1}_{3}\Brc \gamma\Blc t^{(2)}_{2}-t^{1}_{1}\Brc \gamma\Blc t^{(2)}_{3}-t^{1}_{4}\Brc \gamma\Blc t^{(2)}_{4}-t^{1}_{2}\Brc\,, \\
  \Lambda\Blc x_{\overline{\sV}(\cD)}-x_{\underline{\sV}(\cD)}\Brc :=\, &\Lambda\Blc x^{(2)}_{1}-x^{1}_{3}\Brc \Lambda\Blc x^{(2)}_{2}-x^{1}_{1}\Brc \Lambda\Blc x^{(2)}_{3}-x^{1}_{4}\Brc \Lambda\Blc x^{(2)}_{4}-x^{1}_{2}\Brc\,.
\end{align*}
and $\Lambda\Blc x_{\overline{\sV}(\cD)}-x_{\underline{\sV}(\cD)}\Brc$ is also expressed in the same way. Obviously, there are $4!$ such diagram.
\begin{figure}[htb]
\centering
\begin{tikzpicture}[scale=2]
\tikzstyle{vertex}=[circle, black, inner sep=0pt, minimum size=5pt]
      \foreach \i in {1,...,4}
      {
         \path (\i,0) coordinate (X\i);
         \node[vertex,fill=blue] (X\i) at (\i,0) [label=below:$T^{1}_{\i}$] {};
      }
      \foreach \j in {1,...,4}
      {
         \path (\j,1) coordinate (Y\j);
		 \node[vertex,fill=red] (Y\j) at (\j,1) [label=right:$T^{(2)}_{\j}$] {};
      }
      \path
      (X1) edge (Y2) (X2) edge (Y4) (X3) edge[bend left=20] (Y1) (X4) edge[bend left=10] (Y3);     
\end{tikzpicture}
\caption{A admissible diagram $\cD\in {\bD}(4,4)$ with $T^{(j)}_{l}=(t^{(j)}_{l},x^{(j)}_{l})$}
\label{Fig.2_1}
\end{figure}

If $\gamma(\cdot)=\delta(\cdot)$, then \eqref{eq.EE_Diag.2nd} reduced to 
\begin{align}\label{eq.EE_Diag.2nd'}
		\EE\Big[|I^{W}_{n}({f_{n}})|^2\Big] 
		=&\int \prod_{r=1}^{n} G_{t_{r+1}-t_{r}}\lt(x_{r+1}-x_{r}\rt)  \Lambda\Blc x_{r}-y_{r}\Brc \\
		&\quad \times G_{t_{r+1}-t_{r}}\lt(y_{r+1}-y_{r}\rt)\cdot \1_{\{0<t_{1}<\cdots<t_{n}<t\}} dt_{\cD}dx dy\,.\nonumber
	\end{align}
This is because the only admissible admissible diagram is the `trivial' one shown in Figure \ref{Fig.2_2} in this case. Otherwise, in some non-trivial admissible diagrams (e.g. the one in Figure \ref{Fig.2_1}) the indicate function $\1_{\{0<t^{1}_{1}<\cdots<t^{1}_{n}<t\}}$ is not compatible with $\1_{\{0<t^{(2)}_{1}<\cdots<t^{(2)}_{n}<t\}}$.
\begin{figure}[htb]
\centering
\begin{tikzpicture}[scale=2]
\tikzstyle{vertex}=[circle, black, inner sep=0pt, minimum size=5pt]
      \foreach \i in {1,...,4}
      {
         \path (\i,0) coordinate (X\i);
         \node[vertex,fill=blue] (X\i) at (\i,0) [label=below:$T^{1}_{\i}$] {};
      }
      \foreach \j in {1,...,4}
      {
         \path (\j,1) coordinate (Y\j);
		 \node[vertex,fill=red] (Y\j) at (\j,1) [label=right:$T^{(2)}_{\j}$] {};
      }
      \path
      (X1) edge (Y1) (X2) edge (Y2) (X3) edge (Y3) (X4) edge (Y4);     
\end{tikzpicture}
\caption{The `trivial' admissible diagram $\cD\in {\bD}(4,4)$}
\label{Fig.2_2}
\end{figure}

\section{Lower moment bounds}\label{s.6}
In this section we use the formula \eqref{eq.EE_Diag.G} to obtain the lower moment bounds
for the mild solution of \eqref{eq.LDiffu}. In the remaining part of the paper we shall use the index 
$(t^{l}_j, x^{l}_j)$ to represent the independent variable
$(t_{(l,j)}, x_{(l,j)})$  associated with the vertice $(l, j)$:
the superscript indicates the row that variable corresponds to
and the subscript indicates the column that variable corresponds to. 
Again, in the following, we shall only prove the case \ref{H3}. The cases \ref{H4} and \ref{H5} can be done similarly.

\begin{proof}[Proof of Theorem \ref{thm.LMB}]
  Let $u(t,x)$ be the mild solution given by
\eqref{eq.WCE}-\eqref{eq.f_n_symm}. 
Let $p$ be an even  positive integer. Applying Theorem  \ref{cor.EE_Diag}, we have
\begin{align}\label{eq.WCE_Prod}
	\EE\Blk \prod_{j=1}^{p}u(t,x_j)\Brk=& \EE\Blk \prod_{j=1}^{p} \sum_{n_j=0}^{\infty} I_{n_j}( {f_{n_j}}(\cdot,t,x_j))\Brk\nonumber \\
	=&\sum_{n_1=0}^{\infty}\cdots \sum_{n_p=0}^{\infty} \EE \Blk I _{n_1}( {f_{n_1}}(\cdot,t,x_j))\cdots I _{n_p}( {f_{n_p}}(\cdot,t,x_j))\Brk\nonumber \\
	=&\sum_{m=0}^{\infty}\sum_{{n_1+\cdots+n_p=2m} \atop{\cD\in  {\bD}(f_{n_1},\cdots,
	f_{n_p})} }F_{\cD}( {f_{n_1}},\cdots, {f_{n_p}})\,.
\end{align}
Notice that the last equality follows from the fact that the number of all vertices 
 of an admissible  diagram $\cD$ must be even.

Our next strategy is to find the suitable lower 
bounds for the term 
\[
\sum_{n_1+\cdots+n_p=2m}\sum_{\cD\in  {\bD}} F_{\cD}
\] 
 in \eqref{eq.WCE_Prod} when  $p$ and $m$ are sufficiently large. 
We shall divide our proof into three steps. 

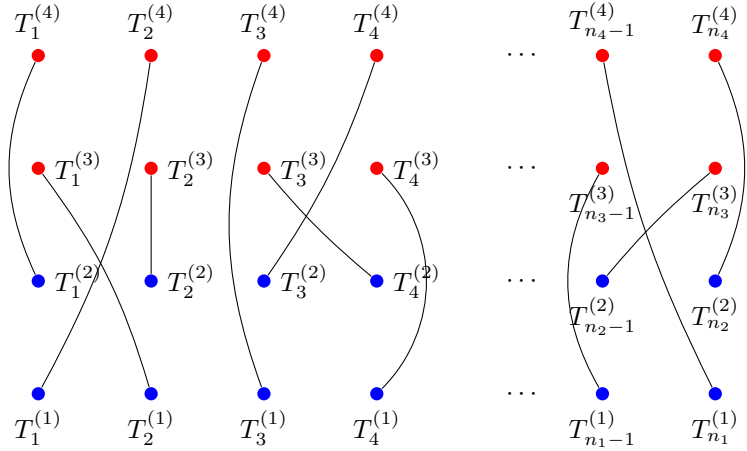
\begin{figure}[htb]
\centering
\begin{tikzpicture}[scale=1.5]
\tikzstyle{vertex}=[circle, black, inner sep=0pt, minimum size=5pt]
      \foreach \i in {1,...,4}
      {
         \path (\i,0) coordinate (X\i);
         \node[vertex,fill=blue] (X\i) at (\i,0) [label=below:$T^{(1)}_{\i}$] {};
      }
      \path (5,0) coordinate (X5);
         \node[vertex] (X5) at (5,0) [label=right:$\cdots$] {};
      \path (6,0) coordinate (X6);
         \node[vertex,fill=blue] (X6) at (6,0) [label=below:$T^{(1)}_{n_1-1}$] {};
      \path (7,0) coordinate (X7);
         \node[vertex,fill=blue] (X7) at (7,0) [label=below:$T^{(1)}_{n_1}$] {};
      \foreach \j in {1,...,4}
      {
         \path (\j,1) coordinate (Y\j);
		 \node[vertex,fill=blue] (Y\j) at (\j,1) [label=right:$T^{(2)}_{\j}$] {};
      }
      \path (5,1) coordinate (Y5);
         \node[vertex] (Y5) at (5,1) [label=right:$\cdots$] {};
      \path (6,1) coordinate (Y6);
         \node[vertex,fill=blue] (Y6) at (6,1) [label=below:$T^{(2)}_{n_2-1}$] {};
      \path (7,1) coordinate (Y7);
         \node[vertex,fill=blue] (Y7) at (7,1) [label=below:$T^{(2)}_{n_2}$] {};
      \foreach \k in {1,...,4}
      {
         \path (\k,2) coordinate (Z\k);
		 \node[vertex,fill=red] (Z\k) at (\k,2) [label=right:$T^{(3)}_{\k}$] {};
      }
      \path (5,2) coordinate (Z5);
         \node[vertex] (Z5) at (5,2) [label=right:$\cdots$] {};
      \path (6,1) coordinate (Z6);
         \node[vertex,fill=red] (Z6) at (6,2) [label=below:$T^{(3)}_{n_3-1}$] {};
      \path (7,1) coordinate (Z7);
         \node[vertex,fill=red] (Z7) at (7,2) [label=below:$T^{(3)}_{n_3}$] {};
      \foreach \l in {1,...,4}
      {
         \path (\l,3) coordinate (W\l);
		 \node[vertex,fill=red] (W\l) at (\l,3) [label=above:$T^{(4)}_{\l}$] {};
      }
      \path (5,3) coordinate (W5);
         \node[vertex] (W5) at (5,3) [label=right:$\cdots$] {};
      \path (6,3) coordinate (W6);
         \node[vertex,fill=red] (W6) at (6,3) [label=above:$T^{(4)}_{n_4-1}$] {};
      \path (7,3) coordinate (W7);
         \node[vertex,fill=red] (W7) at (7,3) [label=above:$T^{(4)}_{n_4}$] {};
      \path
       (W1) edge[bend right=25] (Y1) (W2) edge[bend left=10] (X1) (W3) edge[bend right=20] (X3) (W4) edge[bend left=8] (Y3)
       (Z1) edge[bend left=10] (X2) (Z2) edge (Y2) (Z3) edge[bend right=5] (Y4) (Z4) edge[bend left=45] (X4);
	  \path
	  (W6) edge[bend right=8] (X7) (W7) edge[bend left=25] (Y7)
	  (Z6) edge[bend right=30] (X6) (Z7) edge[bend right=5] (Y6);
\end{tikzpicture}
\caption{A particular scheme when $p=4$}
\label{Fig.3}
\end{figure}

\noindent\textbf{Step 1: }  By the assumption \ref{A1}, namely, all the kernels 
$f_{n_k}$ are nonnegative, to obtain the lower bounds, we can discard any
terms we wish. As  
in \cite{DM2009} we shall keep only those terms such that $n_1=\cdots=n_p$ 
(see the Figure \ref{Fig.3} for a graphical illustration). 
To be more precise, among all the admissible diagrams 
$\cD\in  {\bD}(n_1,\cdots,n_p)$ such that $n_1+\cdots+n_p=2m$, we   take into account only  those diagrams
satisfying the following conditions:
\begin{enumerate}[label=\textbf{(D.\arabic*)}]
    \item\label{Item.Diag_1} We consider only the diagram so that the number of vertices 
    in  each row  are the same. This is, we set
	\begin{equation}\label{eq.cdt_1}
		n_1=\cdots=n_p=\frac{2m}{p}=:m_{p}\,.
	\end{equation}
	\item\label{Item.Diag_2} We set the first $\frac p2$ 
	rows  to be the upper vertices
	$T_{\overline{\sV}(\cD)}:=(t_{\overline{\sV}(\cD)},
	x_{\overline{\sV}(\cD)})$ (which are colored  
	in red in the Figure \ref{Fig.3}), and the remaining 
	rows to be the lower vertices   $T_{\underline{\sV}(\cD)}:=(t_{\underline{\sV}(\cD)},x_{\underline{\sV}(\cD)})$ (which are colored  in blue in the Figure \ref{Fig.3}). 
\end{enumerate}

\begin{rmk}\label{Rmk.perm}
	Fix  the set of  upper   vertices.  Any  permutation of  the lower  vertices corresponds to 
	an admissible diagram in one to one manner. 
	 Then   there are  \emph{$m!$ such admissible diagrams} satisfying the 
	  conditions \ref{Item.Diag_1} and \ref{Item.Diag_2}.	
\end{rmk}

Let $t\in \RR_+$.  Denote  $L=\frac{t}{2(m_p+1)}$, 
$t_j=\frac{j\cdot t}{2(m_p+1)}$ and $I_j=[a_j,b_j]$ 
for $j=1,\dots,m_{p}$, where $a_j=t_j-L/4$ 
and $b_j=t_j+L/4$.  We assure $t^{l}_{j}$ 
is in $I_j$ for $1\leq l\leq p$ and $1\leq j\leq m_p$. 
And we put some restriction on these points such that 
$t^{l}_{j+1}-t^{l}_{j}$ are smaller than $\ep^\sfb $ 
(the same one in \emph{B($\sfa, \sfb$)}) for any $\ep(\leq 1)$, then
	\begin{equation}\label{eq.cdt_2}
		\frac{t}{4 m_{p}}\simeq\frac{t}{4(m_p+1)}
		\leq t^{l}_{j+1}-t^{l}_{j}\leq 
			\frac{t}{m_p+1}\simeq\frac{t}{m_{p}}\leq \ep^\sfb \,.	
	\end{equation}
	Moreover, combining  \eqref{eq.cdt_1} in \ref{Item.Diag_1} 
	and \eqref{eq.cdt_2}, we must have $m_{p}=\frac{2m}{p}\geq \frac{t}{\ep^\sfb }$, which is equivalent to the following conditon:
	\begin{equation}\label{eq.cdt_3}
	 m\geq \frac{p\cdot t}{2\ep^\sfb}\,.
	\end{equation}

\textbf{Step 2: }
Since  $ {f_n}(\cdot,t,x)$ is defined by \eqref{eq.f_n_symm} we can use  \eqref{eq.EE_Diag.G} in Theorem  \ref{cor.EE_Diag}  to bound $F_{\cD}$ in \eqref{eq.WCE_Prod}.

We only consider particular scenario specified  in \textbf{Step 1}.  
We denote the set of all admissible diagrams satisfying satisfying the 
	  conditions \ref{Item.Diag_1} and \ref{Item.Diag_2}  by $ \bD:= \bD(f_{n_1},
	  \cdots, f_{n_m})$. 
When $\cD\in  \bD$, we have 
\begin{align}\label{eq.LMB_0}
	& F_\cD(f_{n_1}, \cdots, f_{n_p})
	=\int \prod_{l=1}^{p}\prod_{j=1}^{m_{p}} 
	G_{t^{l}_{j+1}-t^{l}_{j}}\lt(x^{l}_{j+1},x^{l}_{j}\rt)
	\1_{I_j}(t^{l}_{j}) \1_{\{0<t^{l}_{1}<\cdots<t^{l}_{m_{p}}<t\}}  \\
	&\qquad\qquad\qquad\quad\, \times\gamma\Blc 
	t_{\overline{\sV}(\cD)}-t_{\underline{\sV}(\cD)}
	\Brc \Lambda\Blc x_{\overline{\sV}(\cD)}-x_{\underline{\sV}(\cD)}
	\Brc dt_{\cD}dx_{\cD}\,, \nonumber
\end{align}
with the convention 
that  $x^{l}_{m_p+1}=x$ and $t^{l}_{m_p+1}=t$ for all $1\leq l\leq p$.

It seems very difficult to compute the multiple integral
in \eqref{eq.LMB_0}. We need to find a suitable lower bounds
of the integral that are the main parts and that are 
relatively easier to handle.  Since $\La(x)\to \infty$ when 
$x\to 0$, we shall first bound the above integral 
with respect to the spatial variables $dx_\cD$  
from below by the   integration  
over  small balls $B_{\ep}(x)$ centered at $x=x_1=\cdots=x_p$ with radius $\ep$. 
By the assumption \ref{H3} or \ref{H4},  it is easy to see $\Lambda\Blc x_{\overline{\sV}(\cD)}-x_{\underline{\sV}(\cD)}\Brc \gtrsim \ep^{-m\lambda}$ since $\#\{\overline{\sV}(\cD)\}=\#\{\underline{\sV}(\cD)\}=m$ and since  we always have $|x_i-x_j|\leq 2\ep$ for any $i\in \overline{\sV}(\cD)$ and $j\in \underline{\sV}(\cD)$.  { Similarly, it is obvious to control $\gamma\Blc t_{\overline{\sV}(\cD)}-t_{\underline{\sV}(\cD)}\Brc \gtrsim t^{-m\gamma}$ because $|t_i-t_j|\leq t$ for any $i\in \overline{\sV}(\cD)$ and $j\in \underline{\sV}(\cD)$.  

On each space-time line, there are $m_{p}$ space-time points. 
By \eqref{eq.cdt_2} we have $t^{l}_{j+1}-t^{l}_{j}\leq \ep^\sfb $ for $1\leq l\leq p$ 
and $1\leq j\leq m_p$. }
The \emph{small ball nondegeneracy property} \emph{B($\sfa,\sfb$)} implies
	\[
	{  \int_{B_{\ep}(x)} G_{t^{l}_{j+1}-t^{l}_{j}}\lt(x^{l}_{j+1},x^{l}_{j}\rt) dx^{l}_{j}\geq C\cdot |t^{l}_{j+1}-t^{l}_{j}|^{\sfa}} 
	\]
	if $x^{l}_{j}$ belong to $B_{\ep}(x)$ for all $l$ and $j$. Thus,  on the domain
	\[
	\Omega_\ep:= \cap_{l=1}^{p}\cap_{j=1}^{m_p}\{  x^{l}_{j}\in B_{\ep}(x) \}
	\] 
we have from the simple fact 
$\Lambda\Blc x_{\overline{\sV}(\cD)}
-x_{\underline{\sV}(\cD)}\Brc\gtrsim \ep^{-m\lambda}$:
\begin{align*}
	\int \prod_{l=1}^{p}\prod_{j=1}^{m_{p}} G_{t^{l}_{j+1}-t^{l}_{j}}&\lt(x^{l}_{j+1},x^{l}_{j}\rt)\Lambda\Blc x_{\overline{\sV}(\cD)}-x_{\underline{\sV}(\cD)}\Brc dx_{\cD} \\
	&\gtrsim\,\int_{\Om_\ep} \prod_{l=1}^{p}\prod_{j=1}^{m_{p}} G_{t^{l}_{j+1}-t^{l}_{j}} \lt(x^{l}_{j+1},x^{l}_{j}\rt)\Lambda\Blc x_{\overline{\sV}(\cD)}-x_{\underline{\sV}(\cD)}\Brc dx_{\cD} \\
	&\gtrsim\, \ep^{-m\lambda}\int_{B_{\ep}(x)^{2m}} \prod_{l=1}^{p}\prod_{j=1}^{m_{p}} G_{t^{l}_{j+1}-t^{l}_{j}}\lt(x^{l}_{j+1},x^{l}_{j}\rt) dx_{\cD} \\
	&=\, \ep^{-m\lambda}\int_{B_{\ep}(x)^{2m-1}} \int_{B_{\ep}(x)} G_{t^{1}_{2}-t^{1}_{1}}\lt(x^{1}_{2},x^{1}_{1}\rt) dx^{1}_{1}\\
	&\qquad\qquad\qquad\quad\times \prod_{l=1,j=1 \atop l, j \neq 1}^{p,m_{p}} G_{t^{l}_{j+1}-t^{l}_{j}}\lt(x^{l}_{j+1},x^{l}_{j}\rt) d{x_{\cD}\backslash x^{1}_{1}}  \\
	&\gtrsim\, \ep^{-m\lambda} \lt| t^{1}_{2}-t^{1}_{1} \rt|^{\sfa} \\
	&\qquad\ \times\int_{B_{\ep}(x)^{2m-1}} \prod_{l=1,j=1 \atop l, j \neq 1}^{p,m_{p}} G_{t^{l}_{j+1}-t^{l}_{j}}\lt(x^{l}_{j+1},x^{l}_{j}\rt) d{x_{\cD}\backslash x^{1}_{1}}\,,
\end{align*}
where we used  \eqref{eq.cdt_2} and $d{x_{\cD} \backslash x^{1}_{1}}$ means that
$dx^1_1$ is removed from $d{x_{\cD}}$. 
We  integrate the spatial variables iteratively   to find
\begin{align}\label{eq.LMB_00}
	\int \prod_{l=1}^{p}\prod_{j=1}^{m_{p}} G_{t^{l}_{j+1}-t^{l}_{j}}\lt(x^{l}_{j+1},x^{l}_{j}\rt)&\Lambda\Blc x_{\overline{\sV}(\cD)}-x_{\underline{\sV}(\cD)}\Brc dx_{\cD} \nonumber \\
	&\gtrsim\, \ep^{-m\lambda} \prod_{l=1}^{p}\prod_{j=1}^{m_{p}} \lk t^{l}_{j+1}-t^{l}_{j} \rk^{\sfa} 
\end{align} 
for all  	
\[
	t_\cD\in \tilde \Omega_\ep:= \cap_{l=1}^{p}\cap_{j=1}^{m_p}\{t^{l}_{j}\in  I_j  \}\,. 
	\] 
From this inequality,  Remark \ref{Rmk.perm}, and \eqref{eq.cdt_2} 
  we can bound  \eqref{eq.LMB_0}  from below by
\begin{align}
F_\cD & (f_{n_1},  \cdots, f_{n_p}) \nonumber\\
&\ge 
	 \, \ep^{-m\lambda}t^{-m\gamma} \cdot\int   \prod_{l=1}^{p}\prod_{j=1}^{m_{p}} \lk t^{l}_{j+1}-t^{l}_{j} \rk^{\alpha} \1_{I_j}(t^{l}_{j}) \1_{\{0<t^{l}_{1}<\cdots<t^{l}_{m_{p}}<t\}} dt_{\cD} 
	 \nonumber\\
	&\gtrsim \, \ep^{-m\lambda}t^{-m\gamma} \cdot \lc\frac{t}{4m_p}\rc^{2m\sfa}\int \prod_{l=1}^{p}\prod_{j=1}^{m_{p}} \1_{I_j}(t^{l}_{j}) \1_{\{0<t^{l}_{1}<\cdots<t^{l}_{m_{p}}<t\}} dt_{\cD}
	\nonumber \\
	&=: \, \ep^{-m\lambda}t^{-m\gamma} \cdot \lc\frac{t}{4m_p}\rc^{2m\sfa} I_{\ep, p, m} \,,  
	\label{e.6.7} 
\end{align}
 where $I_{\ep, p, m}$ denotes the above multiple integral with respect to $dt_{\cD}$.  
Now let us deal with this integral  $I_{\ep, p, m}$. 
It is easy to see
\begin{align*}
	 I_{\ep, p, m}&= \lk \int \prod_{j=1}^{m_{p}}\1_{I_j}(t_{j}) dt_{1}\cdots dt_{m_p} \rk^{p}
	=\lc\frac{L}{2} \rc^{m_p\times p}\simeq \lc \frac{t}{m_p}\rc^{2m}\,.
\end{align*}
%
Let  $\bD(m_p)$ denote   ${\bD}(f_{m_p},\cdots,f_{m_p})$.
Substituting this bound into \eqref{e.6.7} we have for $\cD\in  {\bD}(m_p)$, 
\begin{align*} 
	 F_{\cD}( {f_{m_p}},&\cdots, {f_{m_p}})\nonumber \\
	\gtrsim\,& \ep^{-m\lambda}t^{-m\gamma} !\cdot \lc\frac{t}{4m_p}\rc^{2m\alpha}\int \prod_{l=1}^{p}\prod_{j=1}^{m_{p}} \1_{I_j}(t^{l}_{j}) \1_{\{0<t^{l}_{1}<\cdots<t^{l}_{m_{p}}<t\}} dt_{\cD}\nonumber \\
	\gtrsim\,& \ep^{-m\lambda}t^{-m\gamma} \cdot \lc\frac{tp}{m} \rc^{2m(\sfa
	+1)}\,. 
\end{align*} 
Since there are $m!$ elements in $\bD(m_p)$,  we have
\begin{align}\label{eq.LMB_000}
\sum_{\cD\in  {\bD}(m_p )}	 F_{\cD} ( {f_{m_p}},\cdots, {f_{m_p}}) 
	\gtrsim\,& m! \ep^{-m\lambda}t^{-m\gamma} \cdot \lc\frac{tp}{m} \rc^{2m(\sfa+1)}\,. 
\end{align}

\textbf{Step 3: }In this   step, we   obtain  the asymptotic behaviors of the term appearing in \eqref{eq.LMB_000} when $m$ is sufficient large. According to Stirling's formula $m!\simeq \sqrt{2\pi m}\cdot(\frac{m}{e})^m$, we arrive at
\begin{align}\label{eq.LMB_1}
	\sum_{\cD\in  {\bD}(m_p )}& F_{\cD}( {f_{m_p}},\cdots, {f_{m_p}})\nonumber \\
	\gtrsim&\, \ep^{-m\lambda}t^{-m\gamma}\cdot \frac{(t\cdot p)^{2m(\sfa+1)}}{m^{m(2\sfa +1)}}
	\simeq\, \lc\ep^{-\lambda}\times \frac{t^{2(\sfa+1)-\gamma}\cdot p^{2(\sfa+1)}}{ m^{2\sfa+1}} \rc^m \,. 
\end{align}
Let us recall that to obtain the above inequality we assumed that  $t,x$ are sufficiently large and $\sfb$ is sufficiently small.
 Consequently, $m$ is also large enough since it satisfies \eqref{eq.cdt_3}. Now  in \eqref{eq.LMB_1}, we can take the value
	\[
	 m_0(\ep) :=\lk C \ep^{-\lambda}t^{2(\sfa+1)-\gamma}p^{2(\sfa+1)} \rk^{\frac{1}{2\sfa+1}}=C\cdot \ep^{-\frac{\lambda}{2\sfa+1}}t^{1+\frac{1-\gamma}{2\sfa+1}}p^{1+\frac{1}{2\sfa+1}}\,.
	\]
With this choice of $m=m_0(\ep)$, the  condition \eqref{eq.cdt_3} i.e. $m\geq \frac{p\cdot t}{2\ep^\sfb }$ together with \eqref{Cond.G2} (i.e. $\sfb(2\sfa+1)-\lambda>0$) imply that
\begin{align*}
 		&~ \ep^{\frac{\sfb (2\sfa+1)-\lambda}{2\sfa+1}}\gtrsim t^{-\frac{1-\gamma}{2\sfa+1}}p^{-\frac{1}{2\sfa+1}}   \\
		\Longleftrightarrow&~ \ep\gtrsim t^{-\frac{1-\gamma}{\sfb(2\sfa +1)-\lambda}}p^{-\frac{1}{\sfb (2\sfa+1)-\lambda}}=:\ep_{t,p} \,. 
	\end{align*} 
	Thus, putting $\ep=\ep_{t,p}$ and $m=m_0(\ep_{t,p})$ into \eqref{eq.LMB_1} we obtain 
	\begin{align*}
		\sum_{\cD\in  {\bD}(m_p )}& F_{\cD}( {f_{m_p}},\cdots, {f_{m_p}})\gtrsim
		\, \exp\lc C\cdot \ep_{t,p}^{-\frac{\lambda}{2\sfa+1}}t^{1+\frac{1-\gamma}{2\sfa+1}}p^{1+\frac{1}{2\sfa+1}} \rc \\
		&=\, \exp\lc C\cdot t^{\frac{1-\gamma}{\sfb (2\sfa+1)-\lambda}\cdot \frac{\lambda}{2\sfa+1}}p^{\frac{1}{\sfb (2\sfa+1)-\lambda}\cdot \frac{\lambda}{2\sfa+1}}\times t^{1+\frac{1-\gamma}{2\sfa+1}}p^{1+\frac{1}{2\sfa+1}} \rc\,,
	\end{align*}
	where
	\begin{align*}
		&1+\frac{1-\gamma}{2\sfa+1}+\frac{1-\gamma}{\sfb (2\sfa+1)-\lambda}\cdot \frac{\lambda}{2\sfa+1}=1+\frac{\sfb\cdot(1-\gamma)}{\sfb (2\sfa+1)-\lambda} 
	\end{align*}
	and
	\begin{align*}
		&1+\frac{1}{2\sfa+1}+\frac{1}{\sfb (2\sfa+1)-\lambda}\cdot \frac{\lambda}{2\sfa+1}=1+\frac{\sfb}{\sfb (2\sfa+1)-\lambda}\,.
	\end{align*}
	This is 
	\begin{align}\label{eq.LMB_2}
	\sum_{\cD\in  {\bD}(m_p )} F_{\cD}( {f_{m_p}},\cdots, {f_{m_p}})\gtrsim\, \exp\lc C\cdot t^{1+\frac{\sfb \cdot(1-\gamma)}{\sfb (2\sfa+1)-\lambda}}p^{1+\frac{\sfb}{\sfb (2\sfa+1)-\lambda}} \rc\,.
	\end{align}
	
As a result, from \eqref{eq.WCE_Prod}, \eqref{eq.LMB_000} and \eqref{eq.LMB_2}  we obtain that
\begin{align*}
	\EE\Blk \prod_{j=1}^{p}u(t,x_j)\Brk
	=\,&\sum_{m=0}^{\infty}\sum_{m_1+\cdots+m_p=2m} \sum_{\cD\in  {\bD}(f_{m_1},\cdots,f_{m_p})} F_{\cD}( {f_{m_1}},\cdots, {f_{m_p}}) \nonumber\\
	\gtrsim\,& \sum_{p\cdot m_p=2m_0}\sum_{\cD\in  \bD(m_p)}F_{\cD}( {f_{m_p}},\cdots, {f_{m_p}})\\
	\gtrsim\,& \exp\lc  t^{1+\frac{\sfb \cdot(1-\gamma)}{\sfb (2\sfa+1)-\lambda}}\cdot p^{1+\frac{\sfb}{\sfb (2\sfa+1)-\lambda}} \rc\,.
\end{align*}
We have completed the proof of Theorem \ref{thm.LMB}.
\end{proof}

\section{Some important SPDEs}\label{s.7} 
In this section, we shall explain the \emph{positivity property} \ref{A1},
 the   \emph{ small ball nondegeneracy property} (\emph{B($\sfa, \sfb$)})   \ref{A2} and the \emph{HLS  total weighted  mass property}   \ref{A3} for some important stochastic PDEs: SHE, $\alpha$-SHE, SWE and SFD. 

\subsection{Stochastic heat equation (SHE)} Firstly, we consider  the well known stochastic heat equation   that has been extensively studied in literature, see \cite{Hu19}  and the references therein. The equation has the following form. 
\begin{equation}\label{eq.SHE}
(\text{SHE})\quad \begin{cases}
  \frac{\partial u(t,x)}{\partial t}=\frac 12 \Delta u(t,x)+u(t,x)\dot{W}(t,x),\quad   t>0,\quad x\in\RR^d\,, \\
  u(0,x)=u_0(x)\,. 
\end{cases}
\end{equation}
In this case the partial differential operator  in the setting of equation \eqref{eq.LDiffu} 
is 
\[
\sL u(t,x) = \frac{\partial u(t,x)}{\partial t}-\frac 12 \Delta u(t,x)\,. 
\]
There is only one initial condition $u(0, x)=u_0(x)$. 
The Green's function and its Fourier transform in spatial variable 
are  respectively:  
\begin{equation}\label{HeatKerG}
  G^{\uph}_t(x)=\frac {1}{(2\pi t)^{d/2}} \exp\lc-\frac{|x|^2}{2t}\rc \quad  \text{and}\quad \cF [G^{\uph}_t(\cdot)](\xi)=\exp\lc-\frac{t|\xi|^2}{2}\rc\,.
\end{equation} 
It is clear that $G_t^{\uph}(x)\ge 0$ is a positive kernel. 
So, the assumption \ref{A1} is obviously satisfied. 
%
We shall show the   \emph{small ball nondegeneracy property} (\emph{B($\alpha, \beta$)})   \ref{A2} and the \emph{HLS  mass property} \emph{M$(\upmu ,\upnu )$}   \ref{A3} in the following proposition \ref{Prop.HSB_Heat} and proposition \ref{Prop.UpperM_SHE} respectively.

\begin{prop}[\textbf{Small Ball Nondegeneracy Property and Lower Moments for SHE}]\label{Prop.HSB_Heat} 
	For the heat kernel $G_t^{\uph}(x)$,  the small ball nondegeneracy  \emph{B(0,2)} holds.
	In fact we have the following statements: 
	\begin{enumerate}[label=(\roman*)]
	\item For all $d\in \bN$, there exist some strict positive constants $C_1$ and $C_2$ independent of
  $t$, $x$ and $\ep$ such that
		\begin{equation}\label{eq.SB_H}
		\inf_{y\in B_{\ep}(x)}\int_{B_{\ep}(x)}G^{\uph }_{t}(y-z)dz \geq C_1  \exp\lc -C_2\frac{t}{\ep^2}\rc\,.
		\end{equation}
	\item Consequently, \emph{B(0,2)} holds for $G_t^{\uph}$, i.e.  there exist a strict positive constant $C$ independent of $t$, $x$ and $\ep$ so that
	\begin{equation}\label{eq.BP_H.2}
		\inf_{y\in B_{\ep}(x)}\int_{B_{\ep}(x)}G^{\uph}_{t}(y-z)dz \geq C\,,
	\end{equation}
	for $0<t\leq \ep^2$.
	\end{enumerate}
	
As a result, assuming $\gamma(\cdot)$ (with $\gamma=2-2H$) and $\Lambda(\cdot)$ satisfy the same conditions   of  Theorem \ref{thm.LMB}, there are some positive constants $c_1$ and $c_2$ independent of $t$, $p$ and $x$ such that 
	\[
	 \EE[|u^{\uph}(t,x)|^p]\geq c_1 \exp\lc c_2\cdot t^{\frac{4H-\lambda}{2-\lambda}}p^{\frac{4-\lambda}{2-\lambda}} \rc\,.
	\]
\end{prop}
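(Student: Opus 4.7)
The plan is to exploit the parabolic scaling $G_t^{\uph}(w) = t^{-d/2}G_1^{\uph}(w/\sqrt{t})$ together with translation invariance to reduce the integral in \eqref{eq.SB_H} to an $\epsilon$-free Gaussian probability. The change of variable $v = (y-z)/\sqrt{t}$ yields
\[
\int_{B_\epsilon(x)} G_t^{\uph}(y-z)\,dz = \PP\bigl(Z \in B_r(b)\bigr), \qquad r := \frac{\epsilon}{\sqrt{t}}, \quad b := \frac{y-x}{\sqrt{t}},
\]
with $Z \sim N(0, I_d)$ and $|b| \le r$ since $y \in B_\epsilon(x)$. Statement (ii) (the range $t \le \epsilon^2$) corresponds to $r \ge 1$, and statement (i) amounts to a lower bound on $\PP(Z \in B_r(b))$ valid for all $r > 0$ and $|b| \le r$.

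For the case $r \ge 1$, I will use the geometric inclusion: set $b' := b$ if $|b| \le 1$ and $b' := b/|b|$ otherwise; then $|b'| \le 1$, and the inclusion $B_1(b') \subset B_r(b)$ follows from the triangle inequality $|z - b| \le |z - b'| + |b' - b| < 1 + (|b| - 1) = |b| \le r$ in the second case (and trivially in the first). Consequently,
\[
\PP\bigl(Z \in B_r(b)\bigr) \ge \inf_{|b'| \le 1} \PP\bigl(Z \in B_1(b')\bigr) =: C_0 > 0,
\]
the infimum being strictly positive by continuity of the Gaussian density and compactness of $\overline{B_1(0)}$. Undoing the scaling gives \eqref{eq.BP_H.2}.

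For general $t > 0$ (i.e.\ arbitrary $r$), I will iterate the bound above using the Chapman-Kolmogorov identity $G_t^{\uph} = G_{t-1}^{\uph} * G_1^{\uph}$. In the scaled problem ($\epsilon = 1$), set $f(t) := \inf_{y \in B_1(0)}\int_{B_1(0)} G_t^{\uph}(y-z)\,dz$. Restricting the convolution integral to $w \in B_1(0)$ and applying the uniform bound $\int_{B_1(0)} G_1^{\uph}(w-z)\,dz \ge C_0$ (which is the case $t = 1$ from the previous step) yields the recursion $f(t) \ge C_0\, f(t-1)$ for $t \ge 1$. Iterating gives $f(t) \ge C_0^{\lfloor t \rfloor + 1} \ge C_0 \exp((\ln C_0) t) = C_1 \exp(-C_2 t)$ with $C_2 := -\ln C_0 > 0$; undoing the scaling produces \eqref{eq.SB_H}, and (ii) is then the special case $t \le \epsilon^2$ of (i).

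Finally, with $\sfa = 0$ and $\sfb = 2$ the admissibility condition $\sfb(2\sfa + 1) - \lambda = 2 - \lambda > 0$ from \eqref{Cond.G2} is compatible with hypotheses \ref{H3}--\ref{H5} (under the mild restriction $\lambda < 2$). Theorem \ref{thm.LMB} then applies; substituting $(\sfa, \sfb) = (0, 2)$ and $\gamma = 2 - 2H$ into \eqref{eq.LMB_thm} simplifies the exponent to $t^{(4H-\lambda)/(2-\lambda)} p^{(4-\lambda)/(2-\lambda)}$, as claimed. I expect the main obstacle to be the execution of the geometric inclusion of the first step --- specifically verifying the triangle-inequality chain uniformly in $|b| \le r$; the subsequent Chapman--Kolmogorov iteration and the algebraic simplification to the stated form of the moment bound are routine.
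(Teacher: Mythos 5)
Your proof is correct, but it takes a genuinely different route to the key estimate \eqref{eq.SB_H} than the paper does. The paper works directly with the Gaussian integral in spherical coordinates, restricts the angular variables to an explicit sector on which the indicator $\1_{B_\ep(y)}$ is identically one, reduces to the one-dimensional integral $\int_0^{(\ep/\sqrt{t})^d}e^{-\tilde r^{2/d}/2}\,d\tilde r$, and then proves a standalone calculus lemma (via a monotonicity/root-counting argument for an auxiliary function $g(\delta)$) showing that this integral is bounded below by $c_\nu^{-1}e^{-c/\delta^\nu}$. You instead use parabolic scaling to reduce everything to $\PP(Z\in B_r(b))$ with $|b|\le r$, dispose of the regime $r\ge 1$ (i.e.\ $t\le\ep^2$) by a geometric inclusion plus compactness, and then obtain the exponential factor for small $r$ from the Chapman--Kolmogorov identity $G^{\uph}_t=G^{\uph}_{t-1}*G^{\uph}_1$ via the recursion $f(t)\ge C_0 f(t-1)$. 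Your argument is more elementary (no explicit angular sector, no calculus lemma) and more portable: it uses only translation invariance, the scaling relation, and the semigroup property, so it would transfer to other convolution semigroups, whereas the paper's computation is tied to the explicit Gaussian form. The trade-off is that the paper's method yields more explicit constants and, as written, also adapts to the $\alpha$-stable case in Proposition 7.4 where only two-sided Nash bounds (not an exact semigroup identity for the comparison kernel) are available. Minor bookkeeping aside (when $t$ is an integer you should stop the iteration at $t-\lfloor t\rfloor+1\in(0,1]$ rather than at $0$, which your exponent $\lfloor t\rfloor+1$ already accommodates), the argument is complete, and the final deduction of the moment bound from Theorem \ref{thm.LMB} with $(\sfa,\sfb)=(0,2)$ matches the paper's.
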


\begin{proof}   
We only need to prove   \eqref{eq.SB_H},   which  is related to what is known as small ball property of Brownian motion. The readers can find the related result in immense literatures, for example (5.6.20) in \cite{H2017} for one dimension. We divide the proof into two steps. 

\noindent	\textbf{Step 1}:  Clearly,  we may assume $x=(0,\cdots,0)$. It may be possible to work on the integral directly. However, we feel easier to use the spherical coordinate for the computation of the integral. We 
  employ the following $d$-dimensional  spherical coordinate 
$(z_1,\cdots,z_d)=\Phi(r,\theta, \phi_1,\cdots,\phi_{d-2})$:
	\begin{equation}\label{eq.Spherical}
		\begin{cases}
		z_1=  r\cdot \cos(\phi_1)\\
		z_2=  r\cdot \sin(\phi_1)\cos(\phi_2)\\
		&\cdots \\
		z_{d-2}=  r\cdot \sin(\phi_1)\cdots \sin(\phi_{d-3})\cos(\phi_{d-2}) \\
		z_{d-1}=  r\cdot \sin(\phi_1)\cdots \sin(\phi_{d-2})\cos(\theta) \\
		z_{d-1}=  r\cdot \sin(\phi_1)\cdots \sin(\phi_{d-2})\sin(\theta)\,,
		\end{cases}
	\end{equation}
	where $0\leq\phi_{n}<\pi$, $n=1,\cdots,d-2$, $0\leq\theta\leq2\pi$. 
	The Jacobian determinant of this transformation 
	is 
	\[
	J_d=r^{d-1}\prod_{k-1}^{d-2}\sin^{d-1-k}(\phi_k)\,.
	\]
Since $G^h_t(\cdot)$ is rotation invariant as a function in $\RR^d$
we only need to consider $y=(r_0,0,\cdots,0)$ 
	 for some fixed $r_0\in(0,\ep)$. Set $B_{\ep}(r_0):=B_{\ep}(y)$, therefore
	\begin{align}
		\int_{B_{\ep}(0)} G^{\uph}_{t}(y-z)dz 
		\geq&\ \int_{B_{\ep}(r_0)\cap B_{\ep}(0)} \frac{1}{(2\pi t)^{d/2}}\exp\lc -\frac{|z|^2}{2t} \rc dz \nonumber\\
		\simeq&\ \int_{0}^{\ep}\int_{[0,\pi)^{d-2}}\int_{0}^{2\pi} \frac{1}{(2\pi t)^{d/2}}\exp\lc -\frac{r^2}{2t} \rc \label{eq.H_L.sc}\\
		 &\qquad\qquad\qquad\quad \times \1_{B_{\ep}(r_0)}(\Psi(r,\theta,\phi))\cdot |J_d| d\theta d\phi dr\,. \nonumber		
	\end{align} 
Notice that the identity  
\[
\1_{B_{\ep}(r_0)}(\Psi(r,\theta,\phi))=\1_{B_{\ep}(0)}((r_0,0,\dots,0)-\Psi(r,\theta,\phi))
\]
 can be expressed as
	\begin{equation}\label{Event_B}
		\begin{split}
		\{(r,\theta,\phi)&\in[0,\ep]\times[0,2\pi)\times[0,\pi)^{d-2}:
	  	r^2\sin^2(\phi_1)+[r\cos(\phi_1)-r_0]^2\leq \ep^2 \}\\
		 &=\{(r,\theta,\phi)\in[0,\ep]\times[0,2\pi)\times[0,\pi)^{d-2}:r^2+r_0^2-2r\cdot r_0 \cos(\phi_1)\leq \ep^2 \} \,.		 
		\end{split}
	\end{equation}
	In order to estimate the lower bound of \eqref{eq.H_L.sc}, we need the following particular subset of $\{(r,\theta,\phi)\in[0,\ep]\times[0,2\pi)\times[0,\pi)^{d-2}: \Psi(r,\theta,\phi)\in B_{\ep}(r_0) \}$:
	\begin{equation}\label{Event_B.sub}
		\begin{split}
		&S_{\ep}(r,\theta,\phi):= \{(r,\theta,\phi)\in[0,\ep]\times[0,2\pi)\times[0,\pi/3)\times[0,\pi/2)^{d-3}: \\
		 &\qquad\qquad\qquad\qquad\qquad\qquad\qquad\qquad\quad r^2+r_0^2-2r\cdot r_0 \cos(\phi_1) \leq \ep^2 \}\,.
		\end{split}
	\end{equation}
	
	Because for $\phi_1\in [0,\pi/3)$, we always have
	\[
	 r^2+r_0^2-2r r_0 \cos(\phi_1) \leq r^2+r_0^2 -rr_0  \leq \ep^2\,,
	\]
	if $0\leq r, r_0\leq \ep$. On the domain 
	 $S_{\ep}(r,\theta,\phi)$, the indicate 
	 function  $\1_{S_{\ep}}(r,\theta,\phi)
	 :=\1_{S_{\ep}(r,\theta,\phi)}(r,\theta,\phi)=1$. 
	 Then we have from \eqref{eq.H_L.sc}
	\begin{align}
		\int_{B_{\ep}(0)} &G^{\uph}_{t}(y-z)dz \nonumber \\
		\gtrsim& \int_{0}^{\ep}\int_{[0,\pi)^{d-2}}\int_{0}^{2\pi} \frac{1}{(2\pi t)^{d/2}}\exp\lc -\frac{r^2}{2t} \rc\times \1_{S_{\ep}}(r,\theta,\phi)\cdot |J_d| d\theta d\phi dr \nonumber\\
		\gtrsim& \int_{0}^{\ep} \frac{1}{(2\pi t)^{d/2}}\exp\lc -\frac{r^2}{2t} \rc \times r^{d-1} dr \simeq  \int_{0}^{\lc\frac{\ep}{\sqrt{t}}\rc^{d}}   \exp\lc -\frac{\tilde{r}^{\frac{2}{d}}}{2} \rc d\tilde{r}\,,\label{eq.H_Lclaim}
	\end{align}
	where we have used the change of varible $r\to \tilde{r}=r/\sqrt{t}$ in the last line. 
	
	\textbf{Step 2}:   We shall prove \eqref{eq.H_Lclaim} is greater than $C_1  \exp\lc -\frac{C_2 \cdot t}{\ep^2}\rc$ by showing the following  {claim}. For fixed $\nu>0$, one can find a constant $c_{\nu}$ such that $c_{\nu}\cdot\int_{0}^{\infty} \exp\lc -\frac{1}{2}r^{\nu}\rc dr=1$. We  {claim} that there exists a constant $c>\frac{(\nu+1)^2}{4\nu}$ such that $\forall~\delta:=\lc\frac{\ep}{\sqrt{t}}\rc^{d}>0$, 
	\begin{equation}\label{eq.H_Claim}
		\int_{0}^{\delta} e^{-\frac{r^{\nu}}{2}}dr\geq c^{-1}_{\nu}\cdot e^{-\frac{c}{\delta^{\nu}}}\,.
	\end{equation}
	This is equivalent to prove
	\begin{equation*}
		c_{\nu}\cdot\int_{\delta}^{\infty} \exp\lc -\frac{1}{2}r^{\nu}\rc dr+e^{-\frac{c}{\delta^{\nu}}}\leq 1\,.
	\end{equation*}
	Let 
	\[
	g(\delta)=c_{\nu}\cdot\int_{\delta}^{\infty} e^{-\frac{r^{\nu}}{2}}dr+e^{-\frac{c}{\delta^{\nu}}}\,.
	\]
	 It is easy to see that $g(\delta)$ is continuous and $g(0)=g(\infty)=1$. So in order to prove $g(\delta)\leq 1$ for all $\delta>0$, it suffices to show that if  $c>\frac{(\nu+1)^2}{4\nu}$, then
	\[
	 g'(\delta)=\frac{\nu\cdot c}{\delta^{\nu+1}}e^{-\frac{c}{\delta^{\nu}}}-c_{\nu}\cdot e^{-\frac{\delta^{\nu}}{2}} =0
	\]
	has exactly one root. It is clear  that  this is equivalent to
	\begin{align*}
		\frac{\nu\cdot c}{c_{\nu}} \cdot e^{\frac{\delta^\nu}{2}}=\delta^{\nu+1}e^{\frac{c}{\delta^{\nu}}}\,&\Leftrightarrow \,\exp\lc\frac{c}{\delta^{\nu}}+(\nu+1)\ln(\delta)-\frac{\delta^{\nu}}{2}-\ln\lc  \frac{\nu\cdot c}{c_{\nu}}\rc\rc=1 \\
		&\Leftrightarrow \,h(\delta)=\frac{c}{\delta^\nu}+(\nu+1)\ln(\delta)-\frac{\delta^\nu}{2}-\ln\lc  \frac{\nu\cdot c}{c_{\nu}}\rc=0
	\end{align*}
	has exactly one root. One can notice that $h(0+)=+\infty$ and $h(+\infty)=-\infty$. Then $h(\ep)$ has at least one root. Next, we shall show it has at most one root, which suffices to argue that
	\[
	 h'(\delta)=-\frac{1}{\delta^{\nu+1}}\lk\lc\delta^{\nu}-\frac{\nu+1}{2}\rc^2+\nu\cdot c-\frac{(\nu+1)^2}{4} \rk=0
	\]
	has no root for $\delta>0$. But this is verified when $c>\frac{(\nu+1)^2}{4\nu}$. Lastly, the fact $g'(\delta)=0$ has only one root and the intermediate value theorem imply that the claim 
	 \eqref{eq.H_Claim} holds.
	 
	 Letting $\nu=2/d$ and $\delta=(\frac{\ep}{\sqrt{t}})^{d}$ in \eqref{eq.H_Claim}, we get \eqref{eq.H_Lclaim} is greater than $C_1  \exp\lc -\frac{C_2 \cdot t}{\ep^2}\rc$ for some constant $C_1$ and $C_2$. Thus, we have completed the proof of \eqref{eq.SB_H}. 
\end{proof}

\begin{prop}[\textbf{HLS mass Property  and Upper Moments for SHE}]\label{Prop.UpperM_SHE}
Assume $\gamma(\cdot)$ (with $\gamma=2-2H$) 
and $\Lambda(\cdot)$ with
$\lambda<2$ satisfy the same conditions 
as in Theorem \ref{thm.UMB} or Theorem \ref{thm.UMB2}. 
Then for the heat kernel $G_t^{\uph}(x-y)$, we have \ref{A3} or \ref{A3'} with \emph{$M(-\frac{\lambda}{2})$} hold. In other words, for all $d\in \bN$, there exist some strict positive constants $C_1$, $C_2$ and $C_3$ do not depend on $t$ and $x$ such that
\begin{equation}\label{A.F_SHE}
\sup\limits_{x,x'\in\RR^d} \int_{\RR^{2d}} G^{\uph}_t(x-y)\Lambda(y-y') G^{\uph}_t(x'-y')dydy' \leq C\cdot t^{-\frac{\lambda}{2}}\,,
\end{equation}
or denoting $\mu(d\xi)=\hat{V}(\xi) d\xi$
\begin{equation}\label{A.F_SHE'}
	\sup_{\eta\in\RR^d} \int_{\RR^d}|\hat{G}^{\uph}_t(\xi-\eta)|^2 \mu(d\xi) \leq C_3 \cdot t^{-\frac{\lambda}{2}}\,.
\end{equation}

As a result,  we have the upper $p$-th ($p\geq 2$) moments for $u^{\uph}(t,x)$ for any $d\geq 1$. More precisely,    for some constants $C_1$ and $C_2$ that are independent of $t$, $p$ and $x$ we can get
		 $$\EE[|u^{\uph}(t,x)|^p]\leq C_1\cdot \exp\lc C_2\cdot t^{\frac{4H-\lambda}{2-\lambda}} p^{\frac{4-\lambda}{2-\lambda}} \rc\,.$$
\end{prop}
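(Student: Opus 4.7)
The plan is to establish the HLS-type mass estimate \eqref{A.F_SHE} (or equivalently its Fourier counterpart \eqref{A.F_SHE'}) with exponent $\hbar=-\lambda/2$, and then simply invoke Theorem~\ref{thm.UMB} (respectively Theorem~\ref{thm.UMB2}) to read off the stated moment bound. The hypothesis $\lambda<2$ is exactly what ensures $\hbar>-1$, so once the kernel estimate is in place the machinery of Section~\ref{s.4} applies.

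For the physical-side bound under \ref{H3}, I would follow the recipe outlined in the Remark after \ref{A3}: the Hardy--Littlewood--Sobolev inequality reduces the double integral to $\|G^{\uph}_t\|_{L^{q}}^{2}$ with $q=\tfrac{2d}{2d-\lambda}$, and then a direct Gaussian computation
\[
\int_{\RR^d} G^{\uph}_t(x)^{q}dx = (2\pi t)^{-d(q-1)/2}q^{-d/2}
\]
raised to the $(2d-\lambda)/d$ power yields the desired $t^{-\lambda/2}$ scaling. The case \ref{H4} can be handled in the same style but coordinate-by-coordinate, exploiting $\lambda_j<1$ to apply a one-dimensional HLS on each axis and collecting the total exponent $\lambda=\sum \lambda_j$.

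For the Fourier-side estimate under \ref{H6} or \ref{H7}, I would use the explicit transform $\hat{G}^{\uph}_t(\xi)=e^{-t|\xi|^2/2}$ from \eqref{HeatKerG} and substitute $\xi=\eta+\zeta/\sqrt{t}$ into
\[
\int_{\RR^d} e^{-t|\xi-\eta|^2}|\hat{\Lambda}(\xi)|d\xi.
\]
For the Riesz case this rescales cleanly to $t^{-\lambda/2}\int e^{-|\zeta|^2}|\sqrt{t}\eta+\zeta|^{\lambda-d}d\zeta$, and the supremum over $\eta$ is controlled by the value at $\eta=0$ by symmetric-decreasing rearrangement (since $e^{-|\zeta|^2}$ is radial and decreasing, and $|\cdot|^{\lambda-d}$ is radial and integrable near the origin because $\lambda<d$). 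The product case \ref{H6} is handled by a one-dimensional version of the same substitution in each of the $d$ coordinates, with convergence guaranteed by $\lambda_j<1$.

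Once \ref{A3} (or \ref{A3'}) is verified with $\hbar=-\lambda/2$, the upper bound in Theorem~\ref{thm.UMB} reads
\[
\exp\!\Bigl(C_2\cdot t^{1+\frac{1-\gamma}{\hbar+1}}p^{1+\frac{1}{\hbar+1}}\Bigr),
\]
and substituting $\gamma=2-2H$ together with $\hbar+1=(2-\lambda)/2$ produces the exponents $\frac{4H-\lambda}{2-\lambda}$ in $t$ and $\frac{4-\lambda}{2-\lambda}$ in $p$, matching the conclusion. The only mild subtlety I anticipate is the uniformity of the supremum in $\eta$ for the Fourier argument; everything else is direct Gaussian calculation and an appeal to the general framework already established in Section~\ref{s.4}.
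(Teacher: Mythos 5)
Your proposal is correct and lands on the same conclusion, but your verification of the physical-space estimate \eqref{A.F_SHE} follows a slightly different route from the paper's actual proof. You apply the Hardy--Littlewood--Sobolev inequality to reduce the double integral to $\|G^{\uph}_t\|_{L^q}^2$ with $q=\tfrac{2d}{2d-\lambda}$ and then compute the Gaussian $L^q$ norm explicitly; your exponent bookkeeping is right, since $-\tfrac{d(q-1)}{2}\cdot\tfrac{2}{q}=-\tfrac{\lambda}{2}$. This is precisely the ``standard'' route sketched in the remark following \ref{A3}. The paper's proof of the proposition instead verifies the factorized total weighted mass property of \eqref{A.F_upper_1}: it checks $\int_{\RR^d} G^{\uph}_t(y)\,dy=1$ and
\[
\sup_{x\in\RR^d}\int_{\RR^d} G^{\uph}_t(x-y)\Lambda(y)\,dy\;\lesssim\;\sup_{x\in\RR^d}\EE|\sqrt{t}\,X-x|^{-\lambda}\;\leq\; C\,t^{-\lambda/2},
\]
citing a negative-moment lemma for Gaussian vectors, and then concludes \eqref{A.F_SHE} with $\hbar=0+(-\tfrac{\lambda}{2})$ via the remark relating the two properties. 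The two arguments buy essentially the same thing here (the HLS route needs $\lambda<d$, which \ref{H3} supplies; the factorized route works verbatim for \ref{H4} as well, whereas your coordinate-by-coordinate HLS sketch for \ref{H4} would need to be written out). On the Fourier side, your substitution $\xi=\eta+\zeta/\sqrt{t}$ together with the rearrangement argument for the supremum over $\eta$ is, if anything, more transparent than the paper's one-line bound, and the finiteness of $\int e^{-|\zeta|^2}|\zeta|^{\lambda-d}\,d\zeta$ under $\lambda<d$ settles the uniformity concern you raise. The final step --- substituting $\hbar=-\lambda/2$ and $\gamma=2-2H$ into the exponents of Theorem \ref{thm.UMB} --- is identical to the paper's.
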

\begin{proof}  We only need to prove \eqref{A.F_upper_1} with 
\emph{$\bar{M}(0,-\frac{\lambda}{2})$}. This is,
	\begin{align*}
		\sup_{x\in\RR^d}\int_{\RR^d} G^{\uph}_t(x-y)dy=& \int_{\RR^d} G^{\uph}_t(y)dy=1\,, \\
		\sup_{x\in\RR^d} \int_{\RR^d} G^{\uph}_t(x-y)\Lambda(y)dy \,\ls&\,  
		 \sup_{x\in\RR^d} \EE|\sqrt{t}X-x|^{-\la}    \leq C\cdot t^{-\frac{\lambda}{2}}\,,
	\end{align*}
	where $X$ is a standard normal random variable and
	 the above last inequality follows from \cite[Lemma A.1]{HNS11}. 
	 
	 For the \eqref{A.F_SHE'}, it is easy to
	\begin{align*}
		\sup_{\eta\in\RR^d} \int_{\RR^d}|\hat{G}^{\uph}_t(\xi-\eta)|^2\mu(d\xi)=&\sup_{\eta\in\RR^d} \int_{\RR^d}e^{-t|\xi-\eta|^2}\mu(d\xi)  \\
		\leq&\ t^{-\frac{\lambda}{2}}\cdot \sup_{\eta\in\RR^d} \int_{\RR^d}\frac{|\xi|^{\lambda-d}}{1+|\xi-\eta|^2}d\xi  \leq C \cdot t^{\hbar}\,.
	\end{align*}
	So, we obtain the upper moment bound. 
\end{proof}


\subsection{Fractional spatial   equations: Space nonhomogeneous case} 
The next  model is the generalized $d~(\geq 1)$-spatial dimensional 
fractional stochastic $\alpha$-heat equation ($\alpha$-SHE) that has been considered in \cite{BC2014,BC2016,CHW2018}: 
\begin{equation}\label{eq.SHE_al}
(\text{$\alpha$-SHE})\quad \begin{cases}
  \frac{\partial u(t,x)}{\partial t}=-(-\nabla(a(x)\nabla))^{\alpha/2}u(t,x)+u(t,x)\dot{W}(t,x),\quad   t>0,\quad x\in\RR^d\,, \\
  u(0,x)=u_0(x)\,,
\end{cases}
\end{equation}
where $0<\alpha<2$,  $a(\cdot):\RR^d\to \RR^{d^2}$ is a matrix valued function whose entries are 
H\"older continuous, and
 there exists a constant $c\geq 1$ such that $c^{-1}\cdot Id\leq a(x)\leq c\cdot Id$. 
 The operator $\sL$ is
 \[
 \sL u(t,x)=\frac{\partial u(t,x)}{\partial t}+
 (-\nabla(a(x)\nabla))^{\alpha/2}u(t,x)
 \]
  and the corresponding Green's function  $G^{\uph,\alpha}_t(x)$
satisfies the following Nash's H\"{o}lder estimates (see e.g. \cite{CHW2018} for more details):
	\begin{equation}\label{eq.Nash}
		\frac 1C \lc t^{-\frac{d}{\alpha}}\wedge \frac{t}{|x-y|^{d+\alpha}} \rc\leq G^{\uph,\alpha}_{t}(x,y)\leq C\lc t^{-\frac{d}{\alpha}}\wedge \frac{t}{|x-y|^{d+\alpha}} \rc\,,
	\end{equation}
	and $ I_0(t,x) =G^{\uph,\alpha}_t\ast u_0(x)$.  
%
Clearly,  \eqref{eq.Nash} ensures the \emph{positivity} of $G^{\uph,\alpha}_{t}(x)$ when $\alpha\in(0,2)$. We still need to take care of the \emph{small ball nondegeneracy property} \ref{A2} with \emph{B($\alpha, \beta$)} and the \emph{HLS  mass property} \ref{A3} with \emph{M$(0,-\frac{\lambda}{\alpha})$}. 

\begin{prop}[\textbf{Small Ball Nondegeneracy Property and Lower Moments for $\alpha$-SHE}]\label{Prop.HSB_Heat_al} 
	For the heat kernel $G_t^{\uph,\alpha}(x)$, we have \emph{B(0,$\alpha$)} holds:
	\begin{enumerate}[label=(\roman*)]
	\item For $\alpha\in(0,2)$ and $d\in \bN$, there exist 
	some strict positive constants $C_1$ and $C_2$ do not depend on $t$ and $\ep$ such that
		\begin{equation}\label{eq.SB_H_al}
		\inf_{y\in B_{\ep}(x)} 
		\int_{B_{\ep}(x)}G^{\uph,\alpha}_{t}(y,z)dz \geq C_1  \exp\lc -C_2\frac{t}{\ep^\alpha}\rc\,.
		\end{equation}
	\item Consequently, \emph{B(0,$\alpha$)} holds for $G_t^{\uph,\alpha}$, i.e. there exist a strict positive constant $C$ independent of $t$ and $\ep$ so that
	\begin{equation}\label{eq.BP_H.2}
		\inf_{y\in B_{\ep}(x)} \int_{B_{\ep}(x)}G^{\uph,\alpha}_{t}(y,z)dz \geq C\,,
	\end{equation}
	for $0<t\leq \ep^\alpha$.	
	\end{enumerate}
	
As a result, assuming $\gamma(\cdot)$ (with $\gamma=2-2H$) and $\Lambda(\cdot)$ satisfy the same conditions of  
Theorem \ref{thm.LMB}, we have 
 the lower $p$-th ($p\geq 2$) moment bound:  there are 
 constants $c_1$ and $c_2$ independent of $t$, $p$ and $x$  such that 
	\[
	 \EE[|u^{\uph,\alpha}(t,x)|^p]\geq c_1 \exp\lc c_2\cdot t^{\frac{2\alpha H-\lambda}{\alpha-\lambda}}p^{\frac{2\alpha-\lambda}{\alpha-\lambda}} \rc\,.
	\]
\end{prop}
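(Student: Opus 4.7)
The strategy is to exploit the Nash-type lower bound in \eqref{eq.Nash} directly, which controls $G_t^{\uph,\alpha}(y,z)$ pointwise in terms of $|y-z|$ and $t$ alone. This makes the argument substantially simpler than the Gaussian case of Proposition \ref{Prop.HSB_Heat}: no spherical coordinates and no delicate small-ball asymptotics of Brownian motion are needed, because the polynomial tails of the $\alpha$-stable kernel already appear on the right-hand side of \eqref{eq.Nash}. Positivity \ref{A1} is also immediate from the lower inequality in \eqref{eq.Nash}.

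For part (i), the key ingredient is an elementary geometric lemma: if $y \in B_\ep(x)$ and $0 < r \leq \ep$, then $B_r(y) \cap B_\ep(x)$ contains the ball of radius $r/2$ centered at $y + \tfrac{r}{2\ep}(x-y)$, and hence has volume at least $c_d\, r^d$ for a dimensional constant $c_d > 0$. I then split into two cases. If $t \leq \ep^\alpha$ (so $t^{1/\alpha} \leq \ep$), the Nash lower bound gives $G_t^{\uph,\alpha}(y,z) \geq C^{-1} t^{-d/\alpha}$ on all of $B_{t^{1/\alpha}}(y)$; integrating over $B_{t^{1/\alpha}}(y) \cap B_\ep(x)$, whose volume is $\gtrsim t^{d/\alpha}$ by the lemma with $r = t^{1/\alpha}$, yields an absolute constant lower bound, which is trivially at least $C_1 \exp(-C_2\, t/\ep^\alpha)$ in this regime. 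If $t > \ep^\alpha$, then on $B_\ep(y) \cap B_\ep(x)$ one still has $|y-z| \leq \ep < t^{1/\alpha}$, so $G_t^{\uph,\alpha}(y,z) \geq C^{-1} t^{-d/\alpha}$ there as well; integrating over this set (volume $\gtrsim \ep^d$) gives $\gtrsim t^{-d/\alpha}\, \ep^d = s^{-d/\alpha}$ with $s := t/\ep^\alpha \geq 1$. The required estimate then reduces to the elementary inequality $s^{-d/\alpha} \geq C_1\, e^{-C_2 s}$ for all $s \geq 1$, valid for $C_2$ sufficiently large.

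Part (ii) is then immediate by restricting to $0 < t \leq \ep^\alpha \leq 1$, in which range the exponential $\exp(-C_2\, t/\ep^\alpha)$ is bounded below by $e^{-C_2}$. With \emph{B(0,$\alpha$)} established, the lower moment bound follows by direct application of Theorem \ref{thm.LMB} with parameters $\sfa = 0$ and $\sfb = \alpha$, provided the initial condition $I_0(t,x) = G^{\uph,\alpha}_t \ast u_0(x)$ is bounded below by a positive constant (which follows from a positive lower bound on $u_0$ together with the fact that $G^{\uph,\alpha}_t$ integrates to $1$). Substituting $\gamma = 2-2H$ into the general exponents from Theorem \ref{thm.LMB} yields $1 + \tfrac{\alpha(1-\gamma)}{\alpha-\lambda} = \tfrac{2\alpha H - \lambda}{\alpha-\lambda}$ and $1 + \tfrac{\alpha}{\alpha-\lambda} = \tfrac{2\alpha-\lambda}{\alpha-\lambda}$, matching the asserted rate. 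The compatibility condition $\sfb(2\sfa+1) - \lambda > 0$ becomes $\alpha - \lambda > 0$, which is the implicit hypothesis on the spatial covariance.

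The only genuinely delicate point is the geometric lemma, since the volume of $B_r(y) \cap B_\ep(x)$ degenerates if $y$ approaches the boundary of $B_\ep(x)$ and one naively centers the small ball at $y$ itself. The midpoint-shift choice $y + \tfrac{r}{2\ep}(x-y)$ fixes this cleanly and uniformly in the position of $y$ within $\bar B_\ep(x)$. Beyond that, the proof is essentially a one-line consequence of Nash's estimate, in sharp contrast to the SHE case which required the auxiliary inequality \eqref{eq.H_Claim} and a more involved analysis.
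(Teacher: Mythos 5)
Your argument is correct, but it follows a genuinely different route from the paper's. The paper converts the lower Nash bound \eqref{eq.Nash} into a sub-exponential pointwise bound $G^{\uph,\alpha}_{t}(y,z)\gtrsim t^{-d/\alpha}\exp\lc -C_{\alpha,d}|y-z|^{\alpha}/t\rc$ (via $1\wedge |u|^{-1}\geq C_1 e^{-C_2|u|^{\alpha}}$) and then declares that the SHE machinery of Proposition \ref{Prop.HSB_Heat} carries over verbatim --- spherical coordinates, the restriction to the sub-cone \eqref{Event_B.sub}, and the auxiliary analytic inequality \eqref{eq.H_Claim}. You instead keep the Nash bound in its min form, observe that the min equals $t^{-d/\alpha}$ whenever $|y-z|\leq t^{1/\alpha}$, and reduce everything to the volume of $B_{r}(y)\cap B_{\ep}(x)$ with $r=t^{1/\alpha}\wedge\ep$; your midpoint-shift lemma (center at $y+\tfrac{r}{2\ep}(x-y)$, which lies within $r/2$ of $y$ and within $\ep-r/2$ of $x$) handles the boundary degeneracy uniformly in $y$. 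This is more elementary and, in my view, cleaner: it avoids the exponential conversion entirely and replaces the delicate inequality \eqref{eq.H_Claim} by the trivial bound $s^{-d/\alpha}\geq C_1 e^{-C_2 s}$ for $s\geq 1$ (which in fact holds for every $C_2>0$ with a suitable $C_1$, not only for $C_2$ large). What the paper's route buys is uniformity of exposition --- the same exponential-kernel template is reused for SHE, $\alpha$-SHE, and the subordinated representations in Section \ref{s.7} --- whereas your route is tailored to kernels with an explicit two-sided polynomial profile. Your bookkeeping for the moment bound is also right: $\sfa=0$, $\sfb=\alpha$, the constraint $\sfb(2\sfa+1)-\lambda>0$ becomes $\lambda<\alpha$, and with $\gamma=2-2H$ the exponents collapse to $\tfrac{2\alpha H-\lambda}{\alpha-\lambda}$ and $\tfrac{2\alpha-\lambda}{\alpha-\lambda}$ as claimed.
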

\begin{proof}
	The proof is similar to the SHE case except  now we   have   the Nash's H\"older estimates \eqref{eq.Nash} instead of the 
	the precise form  of $G^{\uph,\alpha}_{t}(x)$.
	
		By lower bound in  the Nash's  inequality  \eqref{eq.Nash}, we have
	\begin{align}\label{eq.Nash_L}
	 G^{\uph,\alpha}_{t}(x,y) \gtrsim&\ t^{-\frac{d}{\alpha}}\exp\lc -C_{\alpha,d}\cdot\frac{|x-y|^{\alpha}}{t} \rc \,,
	\end{align}
	since $1\wedge |x|^{-1}\geq C_{1,\alpha}\cdot\exp\lc-C_{2,d}\cdot|x|^{\alpha}\rc$ for $\alpha>0$.	
Thus  \eqref{eq.BP_H.2}  can be proved the same way as that of \eqref{eq.SB_H_al}. 
\end{proof}

\begin{prop}[\textbf{HLS  mass Property and Upper Moments for $\alpha$-SHE}]\label{Prop.UpperM_SHE_al}
	Assume $\gamma(\cdot)$ (with $\gamma=2-2H$) and $\Lambda(\cdot)$ with
	$\lambda<\alpha$ satisfy the same conditions of  Theorem \ref{thm.UMB} or Theorem \ref{thm.UMB2}. 
	Then for the heat kernel $G_t^{\uph,\alpha}(x,y)$, we have \ref{A3}  or \ref{A3'} with \emph{$M(-\frac{\lambda}{\alpha})$} hold. In other words, for all $d\in \bN$, there exist some strict positive constants $C_1$ and $C_2$ independent of   $t$ and $x$ such that
	\begin{equation}\label{A.F_SHE_al}
	\sup\limits_{x,x'\in\RR^d} \int_{\RR^{2d}} G^{\uph,\alpha}_t(x,y)\Lambda(y-y') G^{\uph,\alpha}_t(x',y')dydy' \leq C\cdot t^{-\frac{\lambda}{\alpha}}\,. 
	\end{equation}
Furthermore,  there is a positive kernel 
 $Q_t(x-y)$ such that  $G^{\uph,\alpha}_t(x,y)\leq Q_t(x-y)$ and
	\begin{equation}\label{A.F_SHE_al'}
	\sup_{\eta\in\RR^d} \int_{\RR^d}|\hat{Q}_t(\xi-\eta)|^2 |\mu|(d\xi) \leq C_3 \cdot t^{-\frac{\lambda}{\alpha}} 
	\end{equation}
	with $|\mu|(d\xi)=|\hat{V}(\xi) |d\xi$.
	
	 Consequently, we have the upper $p$-th ($p\geq 2$) moment bounds. 
	 This is,    for some constants $C_1$ and $C_2$ that are independent of $t$, $p$ and $x$ we have 
		 $$\EE[|u^{\uph}(t,x)|^p]\leq C_1\cdot \exp\lc C_2\cdot t^{\frac{2\alpha H-\lambda}{\alpha-\lambda}}p^{\frac{2\alpha-\lambda}{\alpha-\lambda}} \rc\,.$$
\end{prop}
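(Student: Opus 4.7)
The plan is to proceed in close analogy with the proof of Proposition \ref{Prop.UpperM_SHE}, but using the Nash upper bound \eqref{eq.Nash} instead of the explicit Gaussian form. First I would define the translation-invariant majorant
\[
Q_t(x) := C\lc t^{-d/\alpha}\wedge \frac{t}{|x|^{d+\alpha}}\rc,
\]
so that the Nash estimate gives $G^{\uph,\alpha}_t(x,y)\le Q_t(x-y)$. The crucial observation is the scaling identity $Q_t(x)=t^{-d/\alpha}Q_1(x/t^{1/\alpha})$, which reduces every integral to a $t$-free one after a change of variables.

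To establish \ref{A3}, I would verify the stronger $\bar{M}(0,-\lambda/\alpha)$ property from \eqref{A.F_upper_1} for $Q_t$. The first integral $\int_{\RR^d}Q_t(y)\,dy$ reduces, via $y=t^{1/\alpha}z$, to $\int_{\RR^d}Q_1(z)\,dz$, which is finite since $1\wedge|z|^{-d-\alpha}\in L^1(\RR^d)$; this gives $\upmu=0$. For the second,
\[
\sup_{x\in\RR^d}\int_{\RR^d}Q_t(x-y)|y|^{-\lambda}\,dy
= t^{-\lambda/\alpha}\sup_{\tilde{x}\in\RR^d}\int_{\RR^d}Q_1(\tilde{x}-z)|z|^{-\lambda}\,dz,
\]
and the latter supremum is finite because $Q_1\in L^1\cap L^\infty$ while $|z|^{-\lambda}$ is locally integrable (using $\lambda<\alpha\le d$) and $Q_1$ decays like $|z|^{-d-\alpha}$ at infinity. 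This yields $\upnu=-\lambda/\alpha$, hence $\hbar=\upmu+\upnu=-\lambda/\alpha$, which is $>-1$ precisely under the hypothesis $\lambda<\alpha$. The corresponding cases \ref{H4}, \ref{H5} follow by the same scaling with $\lambda=\sum\lambda_j$ (or $\delta$-kernel) in place of $|y|^{-\lambda}$.

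For \ref{A3'} I would replace $Q_t$ by a constant multiple of the isotropic $\alpha$-stable heat kernel, which is comparable to the Nash majorant and whose Fourier transform satisfies $\hat{Q}_t(\xi)=Ce^{-ct|\xi|^\alpha}$. Then, assuming \ref{H7} (the case \ref{H6} is similar),
\[
\sup_{\eta}\int_{\RR^d}|\hat{Q}_t(\xi-\eta)|^2|\mu|(d\xi)
\le C\sup_{\eta}\int_{\RR^d}e^{-2ct|\xi-\eta|^\alpha}|\xi|^{\lambda-d}\,d\xi.
\]
Changing $\xi=t^{-1/\alpha}\zeta$ rescales this to $C\,t^{-\lambda/\alpha}\sup_{\tilde\eta}\int e^{-2c|\zeta-\tilde\eta|^\alpha}|\zeta|^{\lambda-d}\,d\zeta$, and the remaining supremum is finite because the exponential factor is integrable while $|\zeta|^{\lambda-d}$ is locally integrable for $\lambda>0$ and decays at infinity.

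Once both \ref{A3} and \ref{A3'} are established with $\hbar=-\lambda/\alpha$, the upper moment bound follows directly by invoking Theorem \ref{thm.UMB} (respectively Theorem \ref{thm.UMB2}), with $\gamma=2-2H$, giving the stated exponent $t^{(2\alpha H-\lambda)/(\alpha-\lambda)}\,p^{(2\alpha-\lambda)/(\alpha-\lambda)}$. The main technical obstacle I anticipate is the Fourier step: the Nash bound itself is not the density of a Levy process, so one must either invoke the comparability of $Q_t$ with the genuine $\alpha$-stable density (to gain access to the explicit Fourier transform $e^{-ct|\xi|^\alpha}$) or, if one prefers to work with $Q_t$ directly, estimate $|\hat{Q}_t|$ by hand via integration by parts in polar coordinates — a slightly delicate calculation but standard for radial functions with the above scaling.
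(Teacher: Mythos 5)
Your argument is correct, but it takes a genuinely different route from the paper's. The paper majorizes $G^{\uph,\alpha}_t$ not by the Nash bound itself but by a subordinated Gaussian obtained from Pollard's formula, $Q_t(x)=C\int_0^\infty t^{-d/\alpha}s^{-d/2}\exp\lc-\tfrac{|x|^2}{Ct^{2/\alpha}s}\rc g(\alpha/2,s)\,ds$; the mass bounds then follow by integrating the Gaussian estimates against $g(\alpha/2,s)\,ds$, and the Fourier bound follows because $\cF[Q_t](\xi)\simeq e^{-C_\alpha t|\xi|^\alpha}$ by Pollard's formula again. You instead exploit the exact scaling $Q_t(x)=t^{-d/\alpha}Q_1(x/t^{1/\alpha})$ of the Nash majorant to get $\bar M(0,-\lambda/\alpha)$ by a pure change of variables, and for the Fourier step you pass to the genuine symmetric $\alpha$-stable density via the classical two-sided comparability $p^{(\alpha)}_t(x)\simeq t^{-d/\alpha}\wedge t|x|^{-d-\alpha}$. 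Both routes are sound and land on the same $\hbar=-\lambda/\alpha$; yours is more elementary for the mass bounds (no subordination identity needed), while the paper's subordination gives the Fourier transform of the majorant directly without invoking the stable heat kernel estimates as an external input. One small slip: your parenthetical justification ``$\lambda<\alpha\le d$'' for the local integrability of $|z|^{-\lambda}$ is not right, since $\alpha>d$ is possible (e.g.\ $d=1$, $\alpha\in(1,2)$); what you actually need is $\lambda<d$, which is already guaranteed by \ref{H3} (or \ref{H4}), so the conclusion stands.
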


\begin{proof}
Presumably, we may  use \eqref{eq.Nash} 
to obtain the desired bounds.  However, we  will use    Pollard's formula in \cite{CHW2018}
to prove this proposition. 
	\begin{equation}\label{eq.Pollard}
		e^{-u^{\frac{\alpha}{2}}}=\int_{0}^{\infty} e^{-us}g(\alpha/2,s)ds\,,~u\geq 0\,,
	\end{equation}
	where $g(\alpha,s)$ is a probability density function of $s\geq 0$ and defined in (1.2) in \cite{CHW2018}. By Proposition 2.2 there, we have 
	\begin{align}\label{eq.Pollard_Est}
		G^{\uph,\alpha}_{t}(x,y)=\,&\int_{0}^{\infty} p(t^{\frac{2}{\alpha}}s,x,y)g(\alpha/2,s)ds\nonumber \\
		\leq\,& C\int_{0}^{\infty} t^{-\frac{d}{\alpha}}s^{-\frac{d}{2}}\exp\lc-\frac{|x-y|^2}{Ct^{2/\alpha}s}\rc g(\alpha/2,s)ds=:Q_{t}(x-y)\,.
	\end{align}
	Therefore, it is sufficient to show  the assumption \ref{A3} can be archived with  
	\emph{$\bar{M}(0,-\frac{\lambda}{2})$} (i.e. the estimates \eqref{A.F_upper_1}) for $Q_{t}(x-y)$. 
	It is not hard to derive that
	\begin{align*}
		\sup\limits_{x\in\RR^d} \int_{\RR^d}Q_{t}(x-y)dy\ \ls \ \int_{0}^{\infty}g(\alpha/2,s)ds<\infty\,,
	\end{align*}
	and 
	\begin{align*}
		\sup\limits_{x\in\RR^d} \int_{\RR^d}& Q_{t}(x-y)\Lambda(y)dy \\
		\ls& \ \int_{0}^{\infty} t^{-\frac{d}{\alpha}}s^{-\frac{d}{2}}\lk \sup\limits_{x\in\RR^d} \int_{\RR^d}\exp\lc-\frac{|x-y|^2}{Ct^{2/\alpha}s}\rc \Lambda(y)dy\rk g(\alpha/2,s)ds \\
		\ls& \ t^{-\frac{\lambda}{\alpha}}\cdot \int_{0}^{\infty} s^{-\frac{\lambda}{2}}g(\alpha/2,s)ds\leq C_2\cdot t^{-\frac{\lambda}{\alpha}}\,, 
	\end{align*}
	where we have applied rearrangement inequality and \cite[Proposition 2.1]{CHW2018}. 
	
	Moreover, for the Fourier transform of $Q_{t}(x)$  with respect to  $x$, we have
	\begin{align*}
		\cF[Q_{t}(\cdot)](\xi)\simeq\,& \int_{0}^{\infty} \exp\lc-Cs\cdot t^{2/\alpha}|\xi|^2 \rc g(\alpha/2,s)ds\\
		\simeq\,&\exp\lc-\Blk Ct^{2/\alpha}|\xi|^2\Brk^{\alpha/2}\rc\,=\, e^{-C_{\alpha}\cdot t|\xi|^\alpha}\,.
	\end{align*}
	Finally, it is relatively easy to see that the assumption \eqref{A.F_SHE_al'} can be archived. Then the upper moment bound follows.  
\end{proof}

\subsection{Stochastic wave equations}
the lower moment bounds for $d$-dimensional stochastic wave equation (SWE) is   one of the SPDEs  that  motivated 
this study. This type of equations has been well-studied in literature. 
There are several works on the upper bounds for any moments. But the lower
  bounds are only known for the second moments except in a few cases. 
 (see e.g. \cite{BC2016,DM2009}).  We give a more complete results for all moments. 
This equation has the following form (we consider only $d=1,2,3$): 
\begin{equation}\label{eq.SWE}
(\text{SWE})\quad \begin{cases}
  \frac{\partial^2 u(t,x)}{\partial t^2}=\frac{\partial^2 u(t,x)}{\partial x^2}+u(t,x)\dot{W}(t,x),\quad   t>0,\quad x\in\RR^d\,, \\
  u(0,x)=u_0(x)\,,\quad\frac{\partial}{\partial t}u(0,x)=v_0(x)\,.
\end{cases}
\end{equation}
The operator $\sL$ has the form
\[
\sL u(t,x) = \frac{\partial^2 u(t,x)}{\partial t^2}-\frac{\partial^2 u(t,x)}{\partial x^2}\,. 
\]
The associated  Green's function   has different forms for different dimensions.
More precisely, it is given by 
\begin{equation}\label{WaveKerG}
\begin{cases}
	G^{\upw}_t(x)=\frac 12 \1_{\{|x|<t\}}\,, &d=1\,, \\
	G^{\upw}_t(x)=\frac 1{2\pi} \frac{1}{\sqrt{t^2-|x|^2}} \1_{\{|x|<t\}}\,, &d=2\,,\\
	G^{\upw}_t(dx)=\frac 1{4\pi} \frac{\sigma_t(dx)}{t} \,, &d=3\,,
\end{cases}
\end{equation}
where $\sigma_t(dx)$ is a surface measure on the sphere  $\partial B_t(0)
\subseteq \RR^3$ with center  at $0$ and radius $t$, 
with total mass $4\pi t^2$ and $G^{\upw}_t(\RR^3)=t$. 
It is well known that $G^{\upw}_t(\cdot)$ may  not be positive  when $d\geq 4$.
On the other hand for any dimension    $d$, 
the Fourier transform of  $G^{\upw}_t(\cdot)$ has the  same form given by 
\[
 \cF [G^{\upw}_t(\cdot)](\xi)=\frac{\sin(t|\xi|)}{|\xi|}\,,\quad \xi\in\RR^d\,.
\]
In this case we also have $I^{\upw}_0(t,x):=\frac{\partial}{\partial t} G^{\upw}_t\ast u_0(x)+G^{\upw}_t\ast v_0(x)$.

When $d=1,2$, $G^{\upw}_t(x)$ are positive functions and when   $d=3$   it is a positive measure. Thus, the assumption \ref{A1} is satisfied for wave kernel $G^W_t(dx)$. The next two propositions are devoted 
to   \ref{A2} and \ref{A3}.
\begin{prop}[\textbf{Small Ball Nondegeneracy Property and Lower Moments for SWE}]\label{Prop.HSB_Wave}
	For the wave kernel $G_t^{\upw}(x)$ defined by  \eqref{WaveKerG}, we have \emph{B(1,1)} holds:
	\begin{enumerate}[label=(\roman*)]
	\item When $d=1$ and $d=2$, there exist  
	 strict positive constants $C_1$ and $C_2$, independent of 
	  $t$, $\ep$ and $y$ such that
	\begin{equation}\label{eq.SB_W.12}
		\inf_{y\in B_{\ep}(x)} \int_{B_{\ep}(x)}G^{\upw}_{t}(y-z)dz \geq C_1\cdot t \exp\lc -C_2\frac{t}{\ep}\rc \,.
	\end{equation}
	Consequently,  there exist a strict positive constant $C$ independent of $t$, $\ep$ and $y$ so that
	\begin{equation}\label{eq.SBP_W.12}
		\inf_{y\in B_{\ep}(x)} \int_{B_{\ep}(x)}G^{\upw}_{t}(y-z)dz \geq C\cdot t\,,
	\end{equation}
	for $0<t\leq \ep$.
	\item When $d=3$,  there exists a strict positive constant $C$ independent of 
	 $t$, $\ep$ and $y$ such that
	\begin{equation}\label{eq.SBP_W.3}
		\inf_{y\in B_{\ep}(x)} \int_{B_{\ep}(x)}G^{\upw}_{t}(y-dz) \geq C \cdot t\,,
	\end{equation}
	for $0<t\leq \ep$.
	\end{enumerate}  
As a consequence, assuming $\gamma(\cdot)$ (with $\gamma=2-2H$) and $\Lambda(\cdot)$ satisfy the same conditions of  Theorem \ref{thm.LMB}, we have the following lower moment bounds for the solution:
	\[
	 \EE[|u^{\upw}(t,x)|^p]\geq c_1 \exp\lc c_2\cdot t^{\frac{2H+2-\lambda}{3-\lambda}}\cdot p^{\frac{4-\lambda}{3-\lambda}} \rc 
	\]
	for some constants $c_1$ and $c_2$ independent of $t$, $p$ and $x$. 
\end{prop}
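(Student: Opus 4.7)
The plan is to handle each dimension separately. By translation and rotational invariance, I take $x = 0$ and write $y = (r, 0, \ldots, 0)$ with $r := |y| \in [0, \epsilon)$; in each case the integral reduces to an explicit geometric quantity which I then minimize over $r$.

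In dimension one, the integral equals $\frac{1}{2}|(-\epsilon, \epsilon) \cap (y - t, y + t)|$, and for $t \leq \epsilon$ an elementary interval-intersection computation shows this is $\geq t/2$, the worst case being the limit $r \to \epsilon^-$ where the intersection reduces to an interval of length $\to t$. In dimension two, I would use polar coordinates $(\rho, \theta)$ centered at $y$ to rewrite the integral as $\frac{1}{2\pi}\int_0^t \rho L(\rho)/\sqrt{t^2 - \rho^2}\, d\rho$, where $L(\rho)$ is the angular measure of $\{\theta : y + \rho e^{\iota\theta} \in B_\epsilon(0)\}$, equal to $2\pi$ for $\rho \leq \epsilon - r$ and to $2\arccos(c)$ on the transition region with $c = (\rho^2 + r^2 - \epsilon^2)/(2r\rho)$ (law of cosines). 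A direct calculation shows $\partial c/\partial r > 0$ whenever $\rho \leq \epsilon$, so $L(\rho)$ is monotone decreasing in $r$ and the infimum over $r$ is attained in the limit $r \to \epsilon^-$. After the substitution $\rho = t\sin\phi$, that limit becomes
\[
\frac{t}{\pi}\int_0^{\pi/2} \sin\phi \cdot \arccos\!\bigl(t\sin\phi/(2\epsilon)\bigr)\, d\phi \geq \frac{t}{3},
\]
using $\arccos(\cdot) \geq \pi/3$ on $[0, 1/2]$ (valid since $t \leq \epsilon$). This proves \eqref{eq.SBP_W.12} for both $d = 1$ and $d = 2$.

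In dimension three, the Green's function is a normalized surface measure on $\partial B_t(y)$, so $\int_{B_\epsilon(0)} G_t^{\upw}(y - dz) = \frac{1}{4\pi t}\, \sigma_{\partial B_t(y)}\!\bigl(B_\epsilon(0)\bigr)$; the sphere $\partial B_t(y)$ intersects $B_\epsilon(0)$ in the spherical cap $\{y + t\omega : \omega \cdot \hat{y} < c\}$ with $c = (\epsilon^2 - r^2 - t^2)/(2tr)$, again by the law of cosines. Using the classical formula that the portion of the unit sphere with $\omega_1 < c$ has surface measure $2\pi(1+c)$ for $c \in [-1, 1]$, a short algebraic manipulation yields the closed form
\[
\int_{B_\epsilon(0)} G_t^{\upw}(y - dz) = \min\!\left(t,\, \frac{\epsilon^2 - (r - t)^2}{4r}\right),
\]
whose infimum over $r \in (0, \epsilon)$ is $\frac{t}{2} - \frac{t^2}{4\epsilon} \geq \frac{t}{4}$ for $t \leq \epsilon$, giving \eqref{eq.SBP_W.3}. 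To upgrade \eqref{eq.SBP_W.12} to the exponential form \eqref{eq.SB_W.12} for $d = 1, 2$, I would handle the regime $t > 2\epsilon$ separately via the inclusion $B_\epsilon(0) \subset B_t(y)$ and the bound $G_t^{\upw} \gtrsim 1/t^{d-1}$ there, giving integral $\gtrsim \epsilon^d/t^{d-1}$, which dominates $C_1 t e^{-C_2 t/\epsilon}$ for small enough $C_1$; the intermediate regime $\epsilon < t \leq 2\epsilon$ follows by continuity.

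The main obstacle is the $d = 2$ case, where the square-root singularity of the Green's function couples with the vanishing full-disc contribution as $r \to \epsilon^-$; the monotonicity of $L(\rho)$ in $r$ and the substitution $\rho = t\sin\phi$ are both essential to absorb the singularity and recover a clean lower bound linear in $t$. Once \ref{A2} is verified with $(\sfa, \sfb) = (1, 1)$, the lower moment bound follows directly from Theorem \ref{thm.LMB}: substituting $\sfb(2\sfa + 1) - \lambda = 3 - \lambda$ and $\gamma = 2 - 2H$ yields exponents $1 + (1 - \gamma)/(3 - \lambda) = (2H + 2 - \lambda)/(3 - \lambda)$ on $t$ and $1 + 1/(3 - \lambda) = (4 - \lambda)/(3 - \lambda)$ on $p$, matching the claim.
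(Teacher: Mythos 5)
Your proposal is correct, and it reaches the three bounds by a genuinely different, more explicitly geometric route than the paper. For $d=1$ the paper computes the convolution $\int G^{\upw}_t(y-z)G^{\upw}_\ep(z)\,dz$ exactly via Fourier transforms and then reads off $t\wedge\ep$; your direct interval-intersection argument gets the same $t\wedge\ep$ (up to the factor $1/2$) with no Fourier analysis. For $d=2$ the paper avoids the square-root singularity entirely by the crude bound $G^{\upw}_t\gtrsim t^{-1}\1_{\{|x|<t\}}$ and a coordinatewise factorization that reduces the problem to two copies of the $d=1$ case; your polar-coordinate computation with the cap angle $L(\rho)=2\arccos(c)$, the monotonicity of $c$ in $r$, and the substitution $\rho=t\sin\phi$ is sharper (it yields the clean constant $t/3$) but requires the extra monotonicity lemma that the paper's reduction sidesteps. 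For $d=3$ both arguments are cap estimates on $\partial B_t(y)$: the paper simply restricts the angular integral to $\phi\in[0,\pi/3)$, where membership in $B_\ep(x)$ is automatic for $t,r\le\ep$, while you invoke Archimedes' formula to get the exact value $\min\bigl(t,\tfrac{\ep^2-(r-t)^2}{4r}\bigr)$ and minimize it; both give $\gtrsim t$. The only soft spot is your treatment of the exponential form \eqref{eq.SB_W.12} on the intermediate range $\ep<t\le 2\ep$, where "by continuity" is not yet a proof because $B_\ep(0)\not\subset B_t(y)$ there; this is easily patched, e.g.\ by noting that for all $t>\ep$ one has $|B_\ep(0)\cap B_t(y)|\ge |B_\ep(0)\cap B_\ep(y)|\gtrsim \ep^d$ and $G^{\upw}_t\gtrsim t^{1-d}$ on its support, which extends your $t>2\ep$ argument to all $t>\ep$ (the paper gets this for free from the elementary inequality $t\wedge\ep\ge C_1 t\,e^{-C_2 t/\ep}$). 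The final passage from B(1,1) to the moment bound via Theorem \ref{thm.LMB} is exactly as in the paper.
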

\begin{rmk}
	The \emph{small ball nondegeneracy property} of wave kernel $G_t^{\upw}$ is motivated by the following fact when $d=1$. Let us illustrate it with $x=y=0$. Then the left hand of \eqref{eq.SB_W.12} can be evaluated exactly as
	\[
	 \int_{-\ep}^{\ep} G^{\upw}_{t}(z)dz=\int_{-\ep}^{\ep}\frac 12 \1_{\{|z|<t\}}dz=t\wedge \ep\,.
	\]
	And then it is not hard to see
	\[
	 t\wedge \ep=\ep\cdot \lc\frac{t}{\ep}\wedge 1\rc \geq \ep\cdot\lc C_1 \cdot  \frac {t}{\ep} \exp\lc -C_2\frac{t}{\ep}\rc\rc=C_1\cdot t \exp\lc -C_2\frac{t}{\ep}\rc\,,
	\]
	which is the right hand of \eqref{eq.SB_W.12}.
\end{rmk}

\begin{proof} We shall give the proof of Proposition \ref{Prop.HSB_Wave} for $d=1,2,3$ in  three steps seperately.

	\textbf{Step 1 ($d=1$)}: It is clear that we only need to show \eqref{eq.SB_W.12}. Without loss of generality, we may assume $x=0$. Let us consider $d=1$ at first. Because $G^{\upw}_{t}(y-z)=\frac 12 \1_{\{|y-z|<t\}}$, then \eqref{eq.SBP_W.12} becomes
	\begin{align}\label{eq.SBP_W1}
		\int_{\RR} G^{\upw}_{t}(y-&z)G^{\upw}_{\ep}(z)dz \nonumber\\
		\simeq\,&\int_{\RR}\cF[ G^{\upw}_{t}(y-\cdot)](\xi)\cF[ G^{\upw}_{\ep}(\cdot)](\xi)d\xi \nonumber\\
		\simeq\,&\int_{\RR} e^{-\iota y\xi}\frac{\sin(t|\xi|)}{|\xi|}\frac{\sin(\ep|\xi|)}{|\xi|}d\xi \nonumber\\
		\simeq\,&\int_{\RR} e^{-\iota y\xi}|\xi|^{-2}\lk \sin^2\lc \frac 12 |t+\ep||\xi|\rc-\sin^2\lc \frac 12 |t-\ep||\xi|\rc \rk d\xi \nonumber\\
		\simeq\,&(|t+\ep|- y)\1_{\{|y|<|t+\ep|\}}-(|t-\ep|- y)\1_{\{|y|<|t-\ep|\}} \,,
	\end{align} 
	where in the last line we have applied the Fourier transform (e.g. 17.34(21) in \cite{GR2014})
	\[
	 \cF[x^{-2}\sin^2(ax)](\xi)=\cF_c[x^{-2}\sin^2(ax)](\xi)= \frac{\pi}{2}(a-\xi/2)\1_{\{\xi<2a\}}\,.
	\]
	
	The rest is routine. We split \eqref{eq.SBP_W1} into two cases: $t>\ep$ and $t\leq \ep$. Noticing $|y|\leq \ep$, when $t>\ep$ we can bound  \eqref{eq.SBP_W1} below by
	\[
	 ((t+\ep)- y)-((t-\ep)- y)\1_{\{|y|<t-\ep\}}\geq 2\ep \1_{\{|y|<t-\ep\}}+t\1_{\{|y|\geq t-\ep\}}\geq \ep\,.
	\]
	The case $t\leq \ep$ can be done similarly, so we omit the details. Therefore, we obtain
	\[
	\int_{B_{\ep}(x)}G^{\upw}_{t}(y-z)dz \geq t\wedge \ep \geq C_1\cdot t \exp\lc -C_2\frac{t}{\ep}\rc \,.
	\]
	We have completed the proof of \eqref{eq.SBP_W.12} when $d=1$.
	
	\textbf{Step 2 ($d=2$)}: Recall that $G^{\upw}_{t}(y-z)=\frac 1{2\pi} \frac{1}{\sqrt{t^2-|y-z|^2}} \1_{\{|y-z|<t\}}$.  Then
	\begin{equation}\label{eq.SBP_W2}
		\begin{split}
			\int_{\RR^2} G^{\upw}_{t}(y-z)&\1_{B_\ep}(z)dz \\
		\gtrsim& \int_{\RR^2} \frac 1t \1_{\{|y-z|<t\}}\1_{\{|z|<\ep\}}dz \\
		\simeq&\frac 1t\int_{\RR^2} \1_{\{|y_1-z_1|<t\}}\1_{\{|y_2-z_2|<t\}}\1_{\{|z_1|<\ep\}}\1_{\{|z_2|<\ep\}}dz \\
		\simeq&\frac 1t\lc\int_{\RR}\1_{\{|y-z|<t\}}\1_{\{|z|<\ep\}}dz \rc^2 \\
		\gtrsim&\frac 1t \lc C_1\cdot t \exp\lc -C_2\frac{t}{\ep}\rc \rc^2=C_1\cdot t \exp\lc -C_2\frac{t}{\ep}\rc \,,
		\end{split}
	\end{equation}
	where we have applied the result in $d=1$ to derive the inequality last line in \eqref{eq.SBP_W2}. Thus, the proof of \eqref{eq.SBP_W.12} when $d=2$ has been completed.
	
	\textbf{Step 3 ($d=3$)}: Let us recall that now $G^{\upw}_{t}(dz)=\frac 1{4\pi} \frac{\sigma_t(dz)}{t}$ where $\sigma_t(dz)$ is the  surface measure on $\partial B_t(0)$. We may assume $x=0$ and simplify $B_{\ep}(0)$ as $B_{\ep}$.  Then \eqref{eq.SBP_W.3} becomes
	\begin{align}\label{eq.SBP_W3}
		\int_{\RR^3} \1_{B_{\ep}}(z)& G^{\upw}_{t}(y-dz) \nonumber \\
		&=\frac 1{4\pi t}\int_{\partial B_{t}} \1_{B_{\ep}}(y-z) \sigma_t(dz)\nonumber \\
		&=\frac 1{4\pi t}\int_{0}^{2\pi}\int_{0}^{\pi} \1_{B_{\ep}}(y-\Psi(\theta,\phi))\lt\|\frac{\partial \Psi}{\partial \theta}\times \frac{\partial \Psi}{\partial \phi} \rt\| d\phi d\theta\nonumber \\
		&=\frac {t}{4\pi}\int_{0}^{2\pi}\int_{0}^{\pi} \1_{B_{\ep}}(y-\Psi(\theta,\phi))|\sin(\phi)| d\phi d\theta\,,
	\end{align}
	where the parametrization is the three dimensional spherical coordinate (i.e. $d=3$ in \eqref{eq.Spherical}):
	\[
	 \Psi(\theta,\phi)=(z_1(\theta,\phi),z_2(\theta,\phi),z_3(\theta,\phi))=(t\sin(\phi)\cos(\theta),t\sin(\phi)\sin(\theta),t\cos(\phi))\,.
	\]
	
	Similarly, we can select the particular subset as in  \eqref{Event_B.sub} so that we can bound \eqref{eq.SBP_W3} below as
	\begin{align*}
		\frac {t}{4\pi}\int_{0}^{2\pi}\int_{0}^{\pi} \1_{B_{\ep}}(y-\Psi(\theta,\phi))|\sin(\phi)| d\phi d\theta \geq \frac {t}{4\pi}\int_{0}^{2\pi}\int_{0}^{\pi/3} |\sin(\phi)| d\phi d\theta=t/4\,.
	\end{align*}
	As a result, we have completed the proof of \eqref{eq.SBP_W.3}.	
\end{proof}

\begin{prop}[\textbf{HLS mass Property  and  Upper Moments for SWE}]\label{Prop.UpperM_SWE}
	Assume $d= 1,2,3$, $\gamma(\cdot)$ (with $\gamma=2-2H$) and $\Lambda(\cdot)$ with $\lambda<2 \wedge d$ satisfy the same conditions of  Theorem \ref{thm.UMB} or Theorem \ref{thm.UMB2}.
	 Then for the wave kernel $G^{\upw}_{t}(x)$, we have \ref{A3} with \emph{M$(2-\lambda)$} or \ref{A3'} with \emph{$M(2-\lambda)$} hold. In other words, for $d=1,2,3$, there exists some strict positive constants $C $  
	 independent of   $t$ and $x$ such that
	\begin{equation}\label{A.F_SWE}
		\sup\limits_{x,x'\in\RR^d} \int_{\RR^{2d}} G^{\upw}_t(x-y)\Lambda(y-y') G^{\upw}_t(x'-y')dydy' \leq C\cdot t^{2-\lambda}\,,
	\end{equation}
Denoting $\mu(d\xi)=\hat{V}(\xi) d\xi$
	\begin{equation}\label{A.F_SWE'}
	\sup_{\eta\in\RR^d} \int_{\RR^d}|\hat{G}^{\upw}_t(\xi-\eta)|^2 |\mu|(d\xi) \leq C  \cdot t^{2-\lambda}\,.
	\end{equation}
	
	Consequently, we have the desired  upper $p$-th ($p\geq 2$) moment bounds for 
	  the solution $u^{\upw}(t,x)$ when $d= 1,2,3$. This is, we can find 
 constants $C_1$ and $C_2$ that are independent of $t$, $p$ and $x$ such that 
		 $$\EE[|u^{\upw}(t,x)|^p]\leq C_1\cdot \exp\lc C_2\cdot t^{\frac{2H+2-\lambda}{3-\lambda}}\cdot p^{\frac{4-\lambda}{3-\lambda}} \rc\,.$$
\end{prop}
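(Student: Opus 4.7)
The plan has two stages: verify the HLS-type mass property with exponent $\hbar=2-\lambda$, then invoke Theorem \ref{thm.UMB} (respectively Theorem \ref{thm.UMB2}) to obtain the $p$-th moment bound mechanically. The key advantage of working with the Fourier formulation \eqref{A.F_SWE'} is that $\hat G^{\upw}_t(\xi)=\sin(t|\xi|)/|\xi|$ is uniform in $d=1,2,3$, so a single argument will handle all three cases rather than requiring separate treatment of the very different explicit kernels in \eqref{WaveKerG}. Since $G^{\upw}_t$ is itself a nonnegative function or a positive measure for $d\le 3$, I would take $Q_t:=G^{\upw}_t$ in \ref{A3'}.

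For the central Fourier estimate, I would start from the elementary bounds $|\sin s|\le 1\wedge |s|$, yielding
\[
|\hat G^{\upw}_t(\xi-\eta)|^2=\frac{\sin^2(t|\xi-\eta|)}{|\xi-\eta|^2}\le F_t(\xi-\eta),\qquad F_t(\xi):=\min\bigl(t^2,|\xi|^{-2}\bigr).
\]
The function $F_t$ is radially symmetric and decreasing in $|\xi|$. Under \ref{H7} the density $|\hat\Lambda(\xi)|\le C|\xi|^{\lambda-d}$ is also radially symmetric and decreasing (since $\lambda<d$), hence the Hardy-Littlewood rearrangement inequality removes the shift by $\eta$, so that the supremum in \eqref{A.F_SWE'} is attained at $\eta=0$:
\[
\sup_{\eta\in\RR^d}\int_{\RR^d} F_t(\xi-\eta)|\xi|^{\lambda-d}\,d\xi=\int_{\RR^d} F_t(\xi)|\xi|^{\lambda-d}\,d\xi.
\]
Converting to polar coordinates, this last integral equals a dimensional constant times $t^{2}\int_0^{1/t} r^{\lambda-1}\,dr+\int_{1/t}^{\infty}r^{\lambda-3}\,dr=C\cdot t^{2-\lambda}$, with both pieces convergent precisely under $0<\lambda<2$, which is the hypothesis $\lambda<2\wedge d$. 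The product cases \ref{H4} and \ref{H6} are treated similarly, using an AM-GM coupling $|\xi|^{-2}\le C\prod_j|\xi_j|^{-2/d}$ together with a coordinate-wise rearrangement, and the delta case \ref{H5} in $d=1$ reduces via Parseval to $\int \sin^2(t\xi)/\xi^2\,d\xi=\pi t$, again matching $\hbar=2-\lambda$.

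The pointwise bound \eqref{A.F_SWE} then follows directly from Plancherel's identity
\[
\int_{\RR^{2d}} G^{\upw}_t(x-y)\Lambda(y-y')G^{\upw}_t(x'-y')\,dy\,dy'=(2\pi)^{-d}\int_{\RR^d} e^{-\iota(x-x')\cdot\xi}|\hat G^{\upw}_t(\xi)|^2\mu(d\xi),
\]
whose modulus is bounded by the $\eta=0$ instance already computed. Finally, Theorem \ref{thm.UMB2} applied with $\hbar=2-\lambda$ and $\gamma=2-2H$ yields $\EE|u^{\upw}(t,x)|^p\le C_1\exp\bigl(C_2\, t^{1+(1-\gamma)/(3-\lambda)}p^{1+1/(3-\lambda)}\bigr)$, and simplifying the exponents as $1+(1-\gamma)/(3-\lambda)=(2H+2-\lambda)/(3-\lambda)$ and $1+1/(3-\lambda)=(4-\lambda)/(3-\lambda)$ gives the claimed bound. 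The main technical subtlety is the rearrangement step for the product covariances \ref{H4}/\ref{H6}, where $|\hat\Lambda(\xi)|$ is only decreasing in each coordinate rather than in $|\xi|$; this forces one to decouple $F_t$ via the AM-GM bound before invoking coordinate-wise rearrangement, but the resulting computation is routine.
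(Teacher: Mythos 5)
Your unified Fourier route is genuinely different from the paper's and has real merit, but it carries two gaps worth flagging before it can replace the paper's argument.

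The paper proves \eqref{A.F_SWE} directly in physical space: for $d=1,2$ it applies the Hardy--Littlewood--Sobolev inequality to $\|G^{\upw}_t\|_{2d/(2d-\lambda)}$, and for $d=3$ it uses the HLS inequality \emph{on the sphere}, exploiting the scaling $\sigma_t(dy)=t^2\sigma_1(d\tilde y)$. It then proves \eqref{A.F_SWE'} separately, by the same $\sin^2 s/s^2\lesssim 1/(1+s^2)$ device you use. You instead prove \eqref{A.F_SWE'} first and then \emph{deduce} \eqref{A.F_SWE} via Plancherel. That deduction is the first gap: under \ref{H3}--\ref{H5} you control $\Lambda$ by two-sided power bounds in physical space, but this does \emph{not} give an upper bound $|\hat\Lambda(\xi)|\lesssim|\xi|^{\lambda-d}$ on the Fourier side. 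It is precisely to avoid needing such Fourier control that the paper lists \ref{A3} (physical, with \ref{H3}--\ref{H5}) and \ref{A3'} (Fourier, with \ref{H6}--\ref{H7}) as separate hypotheses and proves \eqref{A.F_SWE} by HLS rather than Plancherel. Your argument is thus complete for the Theorem~\ref{thm.UMB2} branch but not for the Theorem~\ref{thm.UMB} branch; the spherical HLS for $d=3$ is not actually dispensable. On the positive side, your rearrangement-plus-polar treatment of \eqref{A.F_SWE'} is cleaner and more transparent than the scaling display in the paper, and it handles all three dimensions uniformly, so it is a nice alternative for that half.

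The second gap is the product case. The equal-weight AM--GM bound $|\xi|^{-2}\leq C\prod_j|\xi_j|^{-2/d}$ makes each one-dimensional tail integral $\int_{1/t}^{\infty}\xi^{\lambda_j-1-2/d}\,d\xi$ converge only when every $\lambda_j<2/d$. Under \ref{H4}/\ref{H6} the individual $\lambda_j<1$ can easily violate this (for instance $d=2$, $\lambda_1=0.9$, $\lambda_2=0.1$ has $\lambda<2$ but $\lambda_1>1=2/d$). To make the decoupling work you need a weighted H\"older split: write $F_t(\xi)=F_t(\xi)^{\alpha_1+\cdots+\alpha_d}$ with $\alpha_j=\lambda_j/\lambda$ and use $F_t(\xi)^{\alpha_j}\leq\min(t^{2\alpha_j},|\xi_j|^{-2\alpha_j})$; each factor then integrates to $C_j t^{2\alpha_j-\lambda_j}$ (convergent exactly when $\lambda<2$) and the product gives $t^{2-\lambda}$. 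So the "routine" decoupling you flag does need this nonuniform choice of exponents to go through.
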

\begin{proof}
	It is clear we only need to show  \ref{A3} holds for $G^{\upw}_{t}(x)$ with \emph{M$(2-\lambda)$}, i.e. the estimates \eqref{A.F_SWE}. 
	
	When $d=1,2$, we can easily apply Hardy-Littelewood-Sobolev inequality (\cite[Theorem 4.3]{LL1997}) for $\lambda<d$ to  bound 
\begin{align*}
	\sup\limits_{x,x'\in\RR^d}& \int_{\RR^{2d}} G^{\upw}_t(x-y)\Lambda(y-y') G^{\upw}_t(x'-y')dydy' \\
	\leq& \sup\limits_{x,x'\in\RR^d} \int_{\RR^{2d}} G_t(x-y)|y-y'|^{-\lambda} G_t(x'-y')dydy' \\
	\leq&  \lk \int_{\RR^{d}}|G^{\upw}_t(y)|^{\frac{2d}{2d-\lambda}} dy\rk^{\frac{2d-\lambda}{d}}\,.
\end{align*}
For $d=1$, we have
\[
 \lk \int_{\RR^{d}}|G^{\upw}_t(y)|^{\frac{2d}{2d-\lambda}} dy\rk^{\frac{2d-\lambda}{d}}\simeq \lk\int_{-t}^t |1/2|^{\frac{2}{2-\lambda}} dt\rk^{2-\lambda} \leq C\cdot t^{2-\lambda}\,.
\]
For $d=2$, we have
\begin{align*}
	\lk \int_{\RR^{d}}|G^{\upw}_t(y)|^{\frac{2d}{2d-\lambda}} dy\rk^{\frac{2d-\lambda}{d}}\simeq&\ \lk\int_{\RR^2} |t^2-x^2|^{-\frac{2}{4-\lambda}}\1_{|x|<t}dx\rk^{\frac{4-\lambda}{2}} \\
	=&\ t^{2-\lambda}\cdot \lk\int_{\RR^2} |1-x^2|^{-\frac{2}{4-\lambda}}\1_{|x|<1}dx\rk^{\frac{4-\lambda}{2}} \\
	=&\  C\cdot t^{2-\lambda}\,,
\end{align*}
where the integral is finite if $\lambda<2$.


Now we  shall  apply the  HLS inequality on sphere (see e.g. \cite[Theorem 4.5]{LL1997})  to show  \eqref{A.F_SWE} for $d=3$ and $\lambda<3$. Denote  by $\SS^3$   the unit sphere in $\RR^3$.  We have  
	\begin{align*}
	\sup\limits_{x,x'\in\RR^3}& \int_{\RR^{3}\times\RR^{3}} G^{\upw}_t(x-y)\Lambda(y-y') G^{\upw}_t(x'-y')dydy' \\
	\leq& \sup\limits_{x,x'\in\RR^3} \int_{\RR^{3}\times\RR^{3}} |y-y'|^{-\lambda} \frac{\sigma_t(x-dy)}{4\pi t}\frac{\sigma_t(x'-dy')}{4\pi t} \\
	\simeq&\ t^{2-\lambda}\cdot \sup\limits_{x,x'\in\RR^3} \int_{\RR^{3}\times\RR^{3}} \1_{x+\SS^{3}}(y) |y-y'|^{-\lambda} \1_{x'+\SS^{3}}(y) \sigma_1(dy)\sigma_1(dy')\\
	\ls&\ t^{2-\lambda}\cdot\sup\limits_{x\in\RR^3}  \lk \int_{\RR^{3}}|\1_{x+\SS^{3}}(y)|^{\frac{6}{6-\lambda}} \sigma_1(dy)\rk^{\frac{6-\lambda}{3}}=C\cdot t^{2-\lambda} \,,
	\end{align*}
	where we have made use of the scaling property of the surface measure $\sigma_t(dy)=t^2\sigma_1(d\tilde{y})$ with $y=t\tilde{y}$ in the third line and  the  HLS inequality  \cite[Theorem 4.5]{LL1997}   on sphere in the last line.   This proves \eqref{A.F_SWE}. 

In regard to the bound  \eqref{A.F_SWE'}, it is easy to see that if $\lambda<2\wedge d$
	\begin{align*}
		\sup_{\eta\in\RR^d} \int_{\RR^d}|\hat{G}^{\upw}_t(\xi)|^2\mu(d\xi-\eta)=&\sup_{\eta\in\RR^d} \int_{\RR^d}\lt|\frac{\sin(t|\xi|)}{\xi} \rt|^2 \mu(d\xi-\eta)  \\
		\leq&\ t^{2-\lambda}\cdot \sup_{\eta\in\RR^d} \int_{\RR^d}\frac{|\xi|^{\lambda-d}}{1+|\xi+t\eta|^2}d\xi  \leq C \cdot t^{2-\lambda}\,.
	\end{align*}
Thus,   we  complete  the proof of Proposition \ref{Prop.UpperM_SWE}. 
\end{proof}

\begin{rmk}
	The properties we obtained  in Proposition \ref{Prop.HSB_Heat} (i) and Proposition \ref{Prop.HSB_Wave} (i) can be also rewritten as the  following \emph{small ball  property} (\emph{B($\sfa$,$\sfb$,$\sfc$)}):     if $y\in B_{\ep}(x)$, then
	\begin{equation}\label{eq.SBP}
		\int_{B_{\ep}(x)}G_{t}(y-z)dz \geq C_1\cdot t^a  \exp\lc -C_2\frac{t^b}{\ep^c}\rc\,,
	\end{equation}
	where $\sfa$, $\sfb$ and $c$ are   parameters depending on the kernel. Obviously, \emph{B($\sfa$,$\sfb$,$\sfc$)} is stronger than \emph{B($\sfa$,$\sfb$)} because \eqref{eq.SBP} holds for all $t>0$ other than $0<t\leq \ep^\beta $.
	
	For example, we have proved  that ($\sfa$,$\sfb$,$\sfc$)=(0,1,2) for the heat kernel, $(\sfa,\sfb,\sfc)=(0,1,\alpha)$ for the $\alpha$-heat kernel and ($\sfa$,$\sfb$,$\sfc$)=(1,1,1) for the wave kernel.
	
	Our effort to take into account 
	 \emph{B($\sfa, \sfb$)} 
	rather than \emph{B(a,b,c)} is mainly stimulated by Proposition \ref{Prop.HSB_Wave} (ii). 
	One should note that when $d=3$, the wave kernel can not satisfy the \emph{B(a,b,c)}. 
	Because the three dimensional wave kernel is a surface measure on the sphere $\partial B_t(0)$, there might be no intersection between the surface measure $G_t^{\upw}(y-dz)$ and the ball $B_{\ep}(x)$ if $t\gg \ep$. Then the lower bound in \eqref{eq.SBP_W.3} might be $0$.
\end{rmk}

\subsection{Fractional temporal  and fractional spatial  equations: space homogeneous case} 
In this section we consider  the following $d$-spatial dimensional stochastic partial differential equation of fractional orders both in time and space variables,
which will be
 called the stochastic fractional diffusion (SFD). The existence, uniqueness, upper moment
bounds have been obtained earlier (e.g. \cite{CHHH2017} , \cite{MN2015} and references therein).
But the sharp lower bounds for any moment has not been known. We shall apply Theorem 
\ref{thm.LMB} to obtain a sharp lower moment bounds for this equation.

This type of  equations takes the following form: 
\begin{equation}\label{eq.FDiffu}
(\text{SFD})\quad \begin{cases}
  \partial_t^{\beta} u(t,x)=-\frac12(-\Delta)^{\alpha/2} 
  u(t,x)+u(t,x)\dot{W}(t,x)\,,  &t>0,\quad x\in\RR^d\,, \\
  \partial_t^{k} u(t,x)|_{t=0}=u_{k}(x)\,,  &0\leq k\leq \lceil \beta \rceil-1\,.
\end{cases}
\end{equation}
As in \cite{CHHH2017, MN2015}, we shall assume 
that  $\beta\in(1/2,2)$ and $\alpha\in(0,2]$. We refer to  \cite{KST2006} for the precise meaning of the fractional derivative in time and the fractional Laplacian. Notice that the SWE coincide with the case $(\alpha,\beta)=(2,2)$ in \eqref{eq.FDiffu} formally. 

In this case, the operator $\sL$ is given by
\[
\sL u(t,x)=\partial_t^{\beta} u(t,x)+\frac12(-\Delta)^{\alpha/2} u(t,x)\,. 
\]
The associated Green's function  can be represented by the Fox $H$-function. 
\begin{equation}\label{eq.GY}
	G^{Y}_{t}(x):=G^{Y;\alpha,\beta,d}_{t}(x)=\frac{t^{\beta-1}}{\pi^{d/2}|x|^d} H_{2,3}^{2,1}\lc\frac{|x|^{\alpha}}{2^{\alpha-1}t^{\beta}}\bigg|\substack{ (1,1),(\beta,\beta)\\ (\frac{d}{2},\frac{\alpha}{2}),(1,1),(1,\frac{\alpha}{2})} \rc\,,
\end{equation}
where $H$  is a Fox H-function (e.g. \cite{KS2004}). 
When $\beta>1$ we also need another Green function 
\begin{equation}\label{eq.GZ}
	G^{Z}_{t}(x):=G^{Z;\alpha,\beta,d}_{t}(x)=\frac{t^{\lceil \beta \rceil-1}}{\pi^{d/2}|x|^d} H_{2,3}^{2,1}\lc\frac{|x|^{\alpha}}{2^{\alpha-1}t^{\beta}}\bigg|\substack{ (1,1),(\lceil\beta\rceil,\beta)\\ (\frac{d}{2},\frac{\alpha}{2}),(1,1),(1,\frac{\alpha}{2})} \rc 
\end{equation}
to represent $I_0(t,x)$,  namely,
\begin{equation}\label{eq.FDiffu_J0}
	I^{\upf}_0(t,x)=\sum_{k=0}^{\lceil\beta\rceil-1}\int_{\RR^d}u_{\lceil\beta\rceil-1-k}(y)\partial_{t}^k G^{Z}_{t}(x-y)dy\,.
\end{equation} 
The Fourier transforms of $G^{Y}_{t}(x)$ and $G^{Z}_{t}(x)$ are given by the following :
\begin{align}
	\cF [G^{Z}_{t}(\cdot)](\xi)=\,&t^{\lceil \beta \rceil-1}E_{\beta,\lceil \beta \rceil}\lc -\frac{t^{\beta}|\xi|^{\alpha}}{2}\rc\,,\nonumber \\
	\cF [G^{Y}_{t}(\cdot)](\xi)=\,&t^{\beta-1}E_{\beta,\beta}\lc -\frac{t^{\beta}|\xi|^{\alpha}}{2}\rc\,,\label{Four.GY}
\end{align}
where $E_{\beta,\beta'}$ is the Mittag-Leffler function (e.g. \cite{KST2006}).  
%

As before, we may assume $u_0=1$ and $u_k=0$ for $k\geq 1$ to simplify the form of moments without loss of generality (also see Remark 3.6 in \cite{CHHH2017}).  We   have $I^{\upf}_0(t,x)=1$ by our particular initial conditions. Whence, we can prove Theorem \ref{thm.UMB} with the notations introduced before.

Positivity  of $G^{Y}_{t}(x)$ (as well as  $G^{Z}_{t}(x)$) 
have been obtained in the following  three cases in  \cite[Theorem 3.1]{CHHH2017}:
\begin{equation*} 
	\begin{cases}
		\hbox{$d=1$, $\beta\in(1,2)$ and $\alpha\in[\beta,2]$}\,;\\
 	\hbox{$d=2,3$, $\beta\in(1,2)$ and $\alpha=2$}\,;\\
  	\hbox{$d\in \bN$, $\beta\in(0,1]$ and $\alpha\in(0,2]$}\,. \\
	\end{cases}
\end{equation*}   
Notice that although $\beta$ is allowed to be smaller than $\frac 12$,
 the existence and uniqueness of solutions to \eqref{eq.FDiffu} {can be  proved}
only  under the conditions $\beta\in(\frac 12,2)$ and $\alpha\in(0,2]$. 
 Therefore, we will replace last condition   by
 \[
  d\in \bN\,, \beta\in(\frac 12,1]\,, \, \text{and}\, \alpha\in(0,2]\,.
 \]
This means that  we will assume that
$(\alpha,\beta,d)$ satisfies one of the following three conditions:
\begin{equation}\label{FD_Conds}
	\begin{cases}
		(a)~\beta\in(\frac 12,1] \text{ and } \alpha\in(0,2], &d\in \bN\,;\\
		(b)~\beta\in(1,2) \text{ and } \alpha\in(0,2], &d=2,3\,;\\
		(c)~\beta\in(1,2) \text{ and } \alpha\in[\beta,2], &d=1\,.
	\end{cases}
\end{equation}
As we indicated 
 above  the assumption \ref{A1} is
 met under the above  parameter range of \eqref{FD_Conds}. 
 In the remaining part of this subsection, we shall prove 
   \ref{A2} and \ref{A3} for the Green's function  $G^{Y}_{t}$.

\begin{prop}[\textbf{Small Ball Nondegeneracy Property and Lower Moments for SFD}]\label{Prop.HSB_FD} 
	For the kernel $G^{Y}_{t}(x)$ defined in \eqref{eq.GY}, 
	the \emph{small ball nondegeneracy 
	property}  \emph{B($\beta-1$,$\frac{\alpha}{\beta}$)} holds 
	for the parameter ranges given in  \eqref{FD_Conds}.
	More precisely, there exist a strictly  
	positive constant $C$ independent of $t$, $\ep$ and $y$ such that
\begin{equation}\label{eq.SB_FD}
	\inf_{y\in B_{\ep}(x)} 
	\int_{B_{\ep}(x)}G^{Y}_{t}(y-z)dz\geq C\cdot t^{\beta-1}
\end{equation}
for any $0<t\leq \ep^{\frac{\alpha}{\beta}}$.

As a result, if $\gamma(\cdot)$ (with $\gamma=2-2H$) 
and $\Lambda(\cdot)$ satisfy  the same conditions as in 
Theorem \ref{thm.LMB},  the lower $p$-th ($p\geq 2$) 
moment bounds   hold
	\[
	 \EE[|u^{\upf}(t,x)|^p]\geq c_1 \exp\lc c_2\cdot t^{\frac{\alpha(2\beta+2H-2)-\beta\lambda}{2\alpha\beta-\alpha-\beta\lambda}}\cdot p^{\frac{\beta(2\alpha-\lambda)}{2\alpha\beta-\alpha-\beta\lambda}} \rc 
	\]
	for some constants $c_1$ and $c_2$ 
independent of $t$, $p$ and $x$. 
\end{prop}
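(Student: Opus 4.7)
The plan is to reduce the estimate \eqref{eq.SB_FD} to a single scale-invariant inequality for the normalized kernel $G^{Y}_{1}$, and then to extract a positive lower bound through a compactness plus positivity argument. Unlike the heat and wave cases where one works with an explicit formula for $G_t$, the only explicit information available here is the Fourier transform \eqref{Four.GY}, and the self-similarity it encodes will do all the work.

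The first step is to derive the self-similarity of $G^{Y}_{t}$. Substituting $\eta = t^{\beta/\alpha}\xi$ in the inverse Fourier transform of \eqref{Four.GY} yields
\[
  G^{Y}_{t}(x) \;=\; t^{\beta - 1 - d\beta/\alpha}\, G^{Y}_{1}\!\lc x/t^{\beta/\alpha} \rc, \qquad t > 0,\; x \in \RR^d.
\]
Applying this together with the changes of variables $w = y - z$ and then $u = w/t^{\beta/\alpha}$ converts the integral on the left-hand side of \eqref{eq.SB_FD} into $t^{\beta-1}\int_{B_r(s)} G^{Y}_{1}(u)\, du$, where $r := \ep/t^{\beta/\alpha}$ and $s := (y-x)/t^{\beta/\alpha}$. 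The constraint $0 < t \leq \ep^{\alpha/\beta}$ is exactly what guarantees $r \geq 1$, and $y \in B_\ep(x)$ forces $|s| < r$. Thus \eqref{eq.SB_FD} is equivalent to the scale-invariant statement
\[
  \inf_{r \geq 1,\; |s| < r} \int_{B_r(s)} G^{Y}_{1}(u)\, du \;\geq\; C > 0.
\]

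The apparent difficulty is that $|s|$ may be comparable to $r$, so $B_r(s)$ could be centered far from the origin where $G^{Y}_{1}$ concentrates its mass. A short geometric reduction handles this: setting $s' := s/\max(1,|s|)$, one has $|s'| \leq 1$, and a two-case triangle-inequality check ($|s| \leq 1$ versus $|s| > 1$) shows $B_1(s') \subset B_r(s)$ for every admissible $(r,s)$. Hence the displayed infimum is bounded below by $\inf_{|s'| \leq 1} \int_{B_1(s')} G^{Y}_{1}(u)\, du$, which is strictly positive by the compactness of $\{|s'| \leq 1\}$, the continuity of $s' \mapsto \int_{B_1(s')} G^{Y}_{1}$ (dominated convergence, using $G^{Y}_{1} \in L^1$), and the pointwise positivity of $G^{Y}_{1}$ supplied by \cite[Theorem~3.1]{CHHH2017} under the parameter ranges \eqref{FD_Conds}.

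The main obstacle is really the positivity input: one needs that each unit ball $B_1(s')$ with $|s'| \leq 1$ has a positive-measure intersection with $\{G^{Y}_{1} > 0\}$, which must be checked separately in each of the three regimes of \eqref{FD_Conds}, either from positivity of the Fox H-function in \eqref{eq.GY} on the relevant region or, in the subcase $\beta \in (1/2, 1]$, from the subordination representation of $G^{Y}_{t}$ in terms of the strictly positive $\alpha$-stable density. Once \ref{A2} is verified with $(\sfa, \sfb) = (\beta - 1, \alpha/\beta)$ (the conditions $\sfa > -1$ and $\sfb > 0$ being automatic from $\beta > 1/2$, and $\sfb(2\sfa+1) - \lambda > 0$ being the implicit constraint $\lambda < \alpha(2\beta-1)/\beta$), the claimed lower moment bound follows by a direct application of Theorem \ref{thm.LMB}; substituting $(\sfa, \sfb) = (\beta-1, \alpha/\beta)$ and $\gamma = 2-2H$ into the exponents $1 + \sfb(1-\gamma)/[\sfb(2\sfa+1)-\lambda]$ and $1 + \sfb/[\sfb(2\sfa+1)-\lambda]$ simplifies to give the advertised powers of $t$ and $p$.
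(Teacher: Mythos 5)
Your scaling reduction is genuinely different from the paper's proof and, as far as it goes, it is correct. The self-similarity $G^{Y}_{t}(x)=t^{\beta-1-d\beta/\alpha}G^{Y}_{1}(x/t^{\beta/\alpha})$ does follow from \eqref{Four.GY}, the change of variables does produce the factor $t^{\beta-1}$ times $\int_{B_r(s)}G^{Y}_{1}$, and the geometric inclusion $B_1(s')\subset B_r(s)$ with $s'=s/\max(1,|s|)$ is a correct two-case triangle-inequality check. By contrast, the paper never exploits self-similarity; it handles the three regimes of \eqref{FD_Conds} separately by writing $G^{Y}_t$ through explicit subordination formulas, Wright-function representations, and the neutral-fractional diffusion density, then extracting a lower bound in each case by hand. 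What your approach buys is a clean separation of the scaling (which gives the $t^{\beta-1}$) from the positivity (which is a single, $t$-free fact about $G^{Y}_{1}$); what it gives up is that it does not deliver any quantitative constant, and it transfers all difficulty onto the positivity step.

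That positivity step is where you have a genuine gap, and it is precisely the part of the argument that occupies essentially all of the paper's proof. Your compactness argument needs $\int_{B_1(s')}G^{Y}_{1}(u)\,du>0$ for every $|s'|\le 1$, equivalently that $G^{Y}_{1}>0$ on a positive-measure subset of every such unit ball --- in particular, on the ``half-neighbourhoods'' of the origin that $B_1(s')$ with $|s'|=1$ barely captures. This does \emph{not} follow from the cited \cite[Theorem~3.1]{CHHH2017}, which only asserts $G^{Y}_{t}\ge 0$; nonnegativity plus unit total mass is compatible with $G^{Y}_{1}$ vanishing on $B_1(0)$ entirely, which would make your infimum zero. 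You correctly note that the subordination against the everywhere-positive $\alpha$-stable density closes this in case (a) of \eqref{FD_Conds}, but for cases (b) and (c) (where $\beta\in(1,2)$, and the relevant subordination kernel is not a probability density supported on $(0,\infty)$ in the same clean way) you only gesture at ``positivity of the Fox H-function on the relevant region.'' Until strict positivity of $G^{Y}_{1}$ near the origin is actually established in those two regimes --- which in the paper is done via the Wright-function formulas \eqref{eq.GY_Rep.b2}--\eqref{eq.GY_Rep.b3} and the neutral-fractional diffusion representation \eqref{eq.GY_Rep.c}, together with asymptotics of $H^{1,0}_{1,1}$ --- the proof is incomplete. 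The final algebraic substitution of $(\sfa,\sfb)=(\beta-1,\alpha/\beta)$ into Theorem~\ref{thm.LMB} does check out.
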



\begin{proof}
We divide the proof into three steps to deal with three cases in \eqref{FD_Conds} seperately. 

\noindent \textbf{Step 1: case (a).} The special case  $\beta=1$ 
was treated in \eqref{eq.BP_H.2}, so we can assume  $\beta\in(1/2,1)$. 
By the convolution property of  \cite{CHHH2017}, we get a subordination law
for the Green's function: 
\begin{align}\label{eq.GY_Rep.a}
	G^{Y}_{t}(x)=\,&\frac{t^{\beta-1}}{\pi^{d/2}|x|^d} H_{2,3}^{2,1}\lc\frac{|x|^{\alpha}}{2^{\alpha-1}t^{\beta}}\bigg|\substack{ (1,1),(\beta,\beta)\\ (\frac{d}{2},\frac{\alpha}{2}),(1,1),(1,\frac{\alpha}{2})} \rc\nonumber \\
	=\,&\frac{\beta t^{\beta-1}}{\pi^{d/2} |x|^{d}}\int_0^{\infty} H_{1,2}^{1,1}\lc\frac{|x|^{\alpha}s^{\beta}}{2^{\alpha-1}}\bigg|\substack{ (1,1)\\ (\frac{d}{2},\frac{\alpha}{2}),(1,\frac{\alpha}{2})} \rc H_{1,1}^{1,0}\lc (ts)^{-\beta}\bigg|\substack{ (\beta,\beta)\\ (1,1)} \rc \frac{ds}{s}\,.
\end{align}
When  $y,z\in B_{\ep}(x)$,  $t\leq \ep^{\frac{\alpha}{\beta}}$ and when $\ep$ is 
small enough we have 
\begin{align}\label{GY.Est_a1}
	\int_{B_{\ep}(x)}G&^{Y}_{t}(y-z)dz \nonumber\\
	&\simeq \int_{B_{\ep}(x)} \frac{\beta t^{\beta-1}}{|y-z|^{d}}\int_0^{\infty} H_{1,2}^{1,1}\lc\frac{|y-z|^{\alpha}s^{\beta}}{2^{\alpha-1}}\bigg|\substack{ (1,1)\\ (\frac{d}{2},\frac{\alpha}{2}),(1,\frac{\alpha}{2})} \rc \nonumber\\
	&\qquad\qquad\qquad\qquad\qquad\ \times H_{1,1}^{1,0}\lc (ts)^{-\beta}\bigg|\substack{ (\beta,\beta)\\ (1,1)} \rc \frac{ds}{s} dz\nonumber\\
	&\simeq \int_{B_{\ep}(x)} \frac{\beta t^{\beta-1}}{|y-z|^{d}}\int_0^{\infty} H_{1,2}^{1,1}\lc\frac{|y-z|^{\alpha}s^{\beta}}{2^{\alpha-1}t^{\beta}}\bigg|\substack{ (1,1)\\ (\frac{d}{2},\frac{\alpha}{2}),(1,\frac{\alpha}{2})} \rc \nonumber\\
	&\qquad\qquad\qquad\qquad\qquad\quad\ \times H_{1,1}^{1,0}\lc s^{-\beta}\bigg|\substack{ (\beta,\beta)\\ (1,1)} \rc \frac{ds}{s} dz\,.
\end{align} 
Notice that the second $H$-function is nonnegative by Lemma 4.5 in\cite{CHHH2017}. Moreover, recall that  the characteristic function and the density of a centered, $d$-dimensional spherically symmetric $\alpha$-stable random variable are given, respectively,  by 
\begin{equation}\label{eq.al_Dens}
	f_{\alpha,d}(\xi)=\exp(-|\xi|^{\alpha})\,, \qquad\xi\in\RR^d\,,
\end{equation}
and
\begin{equation}\label{eq.al_Char}
	\rho_{\alpha,d}(x)=\frac{1}{(\sqrt{\pi}|x|)^d}H_{1,2}^{1,1}\lc\frac{|x|^{\alpha}}{2^{\alpha}}\bigg|\substack{ (1,1)\\ (\frac{d}{2},\frac{\alpha}{2}),(1,\frac{\alpha}{2})} \rc\,, \qquad x\in\RR^d\,.
\end{equation}
This means that  the first Fox H-function is related to   the spherically symmetric $\alpha$-stable distribution (see also  \cite[Theorem 3.3]{CHHH2017} for more details).
Therefore, one can apply the Pollard's formula in \cite{CHW2018} together with \eqref{eq.al_Dens} and \eqref{eq.al_Char} to find  
\begin{align*}
	\frac{1}{|y-z|^{d}}  & H_{1,2}^{1,1}\lc\frac{|y-z|^{\alpha}s^{\beta}}{2^{\alpha-1}t^{\beta}}
 \bigg|\substack{ (1,1)\\ (\frac{d}{2},\frac{\alpha}{2}),(1,\frac{\alpha}{2})} \rc \simeq \ G_{(t/s)^{\beta}}^{\uph,\alpha}(y-z) \\
&\qquad\qquad 	\simeq \ \lc \frac{t}{s}\rc^{-\frac{\beta d}{\alpha}}\wedge \frac{(t/s)^{\beta}}{|y-z|^{d+\alpha}} \nonumber \\
	 & \qquad\qquad \gtrsim \ \lc \frac{t}{s}\rc^{-\frac{\beta d}{\alpha}} \exp\lc -C_{\alpha,d}\cdot\frac{|y-z|^{\alpha}}{(t/s)^{\beta}} \rc \,,
\end{align*}
where $G_t^{\uph,\alpha}(x)$ is the $\alpha$-heat kernel associated to \eqref{eq.SHE_al}.
Whence, by Proposition \ref{Prop.HSB_Heat} (i),  and  \eqref{GY.Est_a1} we   get  
\begin{align}\label{GY.Est_a2}
	\int_{B_{\ep}(x)}G&^{Y}_{t}(y-z)dz \nonumber \\
	&\gtrsim t^{\beta-1}\int_0^{\infty} \exp\lc-c\cdot\frac{(t/s)^{\beta}}{\ep^{\alpha}} \rc \times H_{1,1}^{1,0}\lc s^{-\beta}\bigg|\substack{ (\beta,\beta)\\ (1,1)} \rc \frac{ds}{s} \nonumber\\
	&\gtrsim t^{\beta-1}\int_0^{\infty} \exp\lc-c\cdot s \rc \times H_{1,1}^{1,0}\lc s \bigg|\substack{ (\beta,\beta)\\ (1,1)} \rc \frac{ds}{s} \,,
\end{align} 
if $y,z\in B_{\ep}(x)$ and $t<\ep^{\alpha/\beta}$. 

Next, we need to  analyze $H_{1,1}^{1,0}\lc s \bigg|\substack{ (\beta,\beta)\\ (1,1)} \rc$.
We only need to consider its asymptotics  
for $s$ near  to $0$ and near $\infty$. 
We shall use the results in  the Appendix of \cite{CHHH2017} 
replacing the notations there by   $\Delta=\beta_1-\alpha_1=1-\beta$, $a^*=\beta_1-\alpha_1=1-\beta$, 
$\delta=\beta^{-\beta}$ and $\mu=1-\beta$. 
Let us recall the asymptotic expansion   for the Fox H-function
(e.g. \cite[(A10)]{CHHH2017}): 
\begin{equation}\label{Asymp.HExp}
	H_{p,q}^{m,n}\lc s\, \bigg|\substack{ (a_{i},\alpha_{i})_{1,p}\\ (b_{j},\beta_{j})_{1,q}}\rc =\, \sum_{j=1}^{m}\sum_{l=0}^{\infty}  h_{jl}^{*}\cdot s^{\frac{b_j+l}{\beta_j}}\,. 	
\end{equation}
Thus,  when $s \to 0$  we have 
\begin{align}\label{Asymp_a.H1}
	H_{1,1}^{1,0}\lc s\, \bigg|\substack{ (a_1,\alpha_1)\\ (b_1,\beta_1)} \rc= H_{1,1}^{1,0}\lc s \bigg|\substack{ (\beta,\beta)\\ (1,1)} \rc 
	=\sum_{l=0}^{\infty}h_{l}^{*}\cdot s^{l+1} \,,
\end{align}
since $ m=1$ and $(b_1,\beta_1)=(1,1)$,  $h_{l}^{*}$ 
is given by  (e.g. \cite[(A.12)]{CHHH2017})
\begin{align*}
	h_{l}^{*}=\,&\frac{(-1)^l}{l! \beta_1}\cdot \frac{1}{\Gamma\lc a_1-[b_1+l]\frac{\alpha_1}{\beta_1}\rc}
	=\,\frac{(-1)^l }{l! }\cdot \frac{1}{\Gamma\lc-\beta l \rc}\,.
\end{align*}
Therefore, one can easily see that $h_{0}^{*}=0$, $h_{1}^{*}=-1/\Gamma(-\beta)>0$, and
\[
H_{1,1}^{1,0}\lc s \bigg|\substack{ (\beta,\beta)\\ (1,1)} \rc 
	=\sum_{l=0}^{\infty}h_{l}^{*}\cdot s^{l+1}\simeq h_{1}^{*}\cdot s\,,\qquad |s|\simeq 0\,.
\]
When $s$ goes to infinity, by     \cite[Corollary 1.10.2]{KS2004}, we have the following asymptotic:
\begin{eqnarray}\label{Asymp_a.H2}
	H_{1,1}^{1,0}\left(s \bigg|\substack{ (\beta,\beta)\\ (1,1)}
	\right)
	&=&O\lc s^{[3/2-\beta]/(1-\beta)}\exp\lk-C_\beta s^{1/(1-\beta)} \rk \rc\,,\qquad s\to \infty\,,
	\nonumber\\
	&\gtrsim & \exp(-C_{\beta}s^{1/(1-\beta)})\,,  
\end{eqnarray} 
where
$C_\beta=(1-\beta) \beta^{\beta/(1-\beta)}$.
Whence we can observe that the integral in \eqref{GY.Est_a2} is finite. So, we have for  some constant $C_{\beta}>0$
\begin{align*}
	\int_{B_{\ep}(x)}G&^{Y}_{t}(y-z)dz  \gtrsim C_{\beta}\cdot t^{\beta-1}\,.
\end{align*}
As a result, we have proved the small ball nondegeneracy 
property \emph{B($\beta-1$,$\frac{\alpha}{\beta}$)} for the \textbf{case (a)}.

\noindent\textbf{Step 2: case (b).} In this case  $d=2$ or  $d=3$, $\beta\in(1,2)$ and $\alpha=2$.  By equations (43) and (85) in \cite{P2009}, we have for $\beta\in[1,2)$
\begin{align*}
	&G^{Y}_{t}(x)=\,\Gamma_{\beta,d}(t,x)\,, 
\end{align*}
where 
\begin{align}
	\Gamma_{\beta,2}(t,x)=\,& \frac{C\cdot t^{-1}}{\Gamma(1/2)} \int_1^{\infty} \phi(-\beta/2,0,-|x|t^{-\frac{\beta}{2}}\tau)(\tau^2-1)^{-1/2}d\tau\,,\label{eq.GY_Rep.b2}\\
	\Gamma_{\beta,3}(t,x)=\,&C t^{-\frac{\beta}{2}-1}\int_{1}^{\infty}\phi(-\beta/2,-\beta/2;-|x|t^{-\beta/2}) d\tau\, .\label{eq.GY_Rep.b3}
\end{align}
Here $\phi(a,b,c)$ is the Wright function. 

Let us check the \emph{small ball nondegeneracy property} \emph{B($\beta-1$,$\frac{2}{\beta}$)} for $d=2$   first. If $y,z\in B_{\ep}(x)$ and $t\leq \ep^{2/\beta}$, by the representation \eqref{eq.GY_Rep.b2}
\begin{align*}
	\int_{B_{\ep}(x)}G&^{Y}_{t}(y-z)dz= \int_{B_{\ep}(x)}\Gamma_{\beta,2}(t,y-z)dz \\
	&\simeq\,t^{-1}\int_{B_{\ep}(x)} \int_1^{\infty} \phi(-\beta/2,0,-|y-z|t^{-\frac{\beta}{2}}\tau)(\tau^2-1)^{-1/2}d\tau dz \\
	&\simeq\,t^{-1}\int_{B_{\ep}(x)}\int_{0}^{\infty} \phi(-\beta/2,0,-\tau) t^{\frac{\beta}{2}}\cdot \frac{t^{\frac{\beta}{2}}\cdot \1_{\{|y-z|\leq \tau t^{\beta/2}\}}}{\sqrt{t^{\beta}\tau^2-|y-z|^2}}  d\tau dz \\
	&\gtrsim\,t^{-1}\int_0^{\infty} \phi(-\beta/2,0,-\tau)\cdot t^{\beta}\tau\cdot  \exp\lc-\frac{t^{\beta/2}\tau}{\ep} \rc d\tau\,,
\end{align*}
where the last inequality is derived analogously to the argument
 used in \eqref{eq.SBP_W2} for the wave kernel when $d=2$ and the fact that $\phi(-\beta/2,0,-\tau)$ is positive (see \cite[Section 2]{P2009}). Then since $t^{\beta/2}\leq \ep$ and $\exp(-t^{\beta/2}\tau/\ep)\geq \exp(-\tau)$, we obtain  by the relation between  the Wright function and 
 the Fox $H$-function 
\begin{align}\label{GY.Est_b1}
	\int_{B_{\ep}(x)}G&^{Y}_{t}(y-z)dz = \int_{B_{\ep}(x)}\Gamma_{\beta,2}(t,y-z)dz \nonumber \\
	\gtrsim&\,t^{\beta-1}\int_0^{\infty} \phi(-\beta/2,0,-\tau)\cdot \tau  \exp\lc-\tau \rc d\tau \nonumber \\
	\simeq&\, t^{\beta-1}\int_0^{\infty} H_{1,1}^{1,0}\lc \tau \bigg|\substack{ (0,\beta/2)\\ (0,1)} \rc\cdot \tau  \exp\lc-\tau \rc d\tau \simeq C_{\beta}\cdot t^{\beta-1} \,,
\end{align}
where the integral in the last equality of \eqref{GY.Est_b1} is finite  by the similar asymptotic analysis of $H_{1,1}^{1,0}$ as in \textbf{case (a)}.
Thus, we   proved   \emph{B($\beta-1$,$\frac{2}{\beta}$)} for $d=2$.


Next, let us check the \emph{small ball nondegeneracy property} \emph{B($\beta-1$,$\frac{2}{\beta}$)} for $d=3$. We have by the equation \eqref{eq.GY_Rep.b3}
\begin{align}\label{GY.Est_b2}
	\int_{B_{\ep}(x)}G&^{Y}_{t}(y-z)dz= \int_{B_{\ep}(x)}\Gamma_{\beta,3}(t,y-z)dz\nonumber \\
	&\simeq\,\int_{B_{\ep}(x)}  t^{-\frac{\beta}{2}-1}\int_{1}^{\infty}\phi(-\beta/2,-\beta/2;-|y-z|t^{-\beta/2}) d\tau dz \nonumber\\
	&\simeq\,t^{-\beta-1}\int_{B_{\ep}(x)} \frac{1}{|y-z|} \int_0^{\infty}\phi(-\beta/2,-\beta/2;-\tau) \1_{\{|y-z|\leq t^{\beta/2}\tau \}} d\tau dz\,.
\end{align}
Now  we can apply the same three dimensional spherical coordinate 
transformation as   in the proof of Proposition \ref{Prop.HSB_Wave}
(now for $d=3$). Assuming  $x=0$, 
  the integral with respect to $z$ in \eqref{GY.Est_b2} becomes
\begin{align*}
	\int_{B_{\ep}(0)} \frac{1}{|y-z|}&\1_{\{|y-z|\leq t^{\beta/2}\tau \}} dz 
	\simeq\,\int_{B_{\tau t^{\beta/2}}(0)}\frac{\1_{B_{\ep}(0)}(y-z)}{|z|} dz \\
	\simeq&\, \tau^2 t^{\beta} \int_{0}^{\tau t^{\beta/2}}\int_{0}^{2\pi}\int_{0}^{\pi} r\cdot \1_{B_{\ep}}(y-\Psi(\theta,\phi))|\sin(\phi)| d\phi d\theta dr \\
	\gtrsim&\, \tau^4 t^{2\beta} \cdot \int_{0}^{2\pi}\int_{0}^{\pi/3} |\sin(\phi)| d\phi d\theta \simeq \tau^4 t^{2\beta}\,.
\end{align*}
Thus, plugging it back to \eqref{GY.Est_b2}, we get
\begin{align*}
	\int_{B_{\ep}(x)}G&^{Y}_{t}(y-z)dz \gtrsim t^{\beta-1} \int_0^{\infty}\phi(-\beta/2,-\beta/2;-\tau)\cdot \tau^4 d\tau \simeq t^{\beta-1}\,,
\end{align*}
where the last equality follows from  the  asymptotic
behavior  of the Wright function. Hence we 
complete  the proof of  the proposition in \textbf{case (b)}.

\noindent\textbf{Step 3: case (c).} We have $d=1$, $\beta\in(1,2)$ and $\alpha\in[\beta,2]$. By Remark 3.2 (3) and convolution property Theorem 1.8 in  \cite{CHHH2017}, the  Fox H-function 
admits an alternative representation:
\begin{align}\label{eq.GY_Rep.c}
	G^{Y}_{t}(x)=\,&\frac{{t^{\beta-1}}}{|x|} H_{3,3}^{2,1}\lc\frac{|x|^{\alpha}}{t^{\beta}}\bigg|\substack{ (1,1),({\beta},\beta),(1,\frac{\alpha}{2})\\ (1,1),(1,\alpha),(1,\frac{\alpha}{2})} \rc\nonumber \\
	=\,&\frac{{\beta t^{\beta-1}}}{|x|}\int_0^{\infty} H_{2,2}^{1,1}\lc |x|^{\alpha}s^{\beta}\bigg|\substack{ (1,1),(1,\frac{\alpha}{2})\\ (1,1),(1,\frac{\alpha}{2})} \rc H_{1,1}^{1,0}\lc (ts)^{-\beta}\bigg|\substack{ ({\beta},\beta)\\ (1,\alpha)} \rc \frac{ds}{s} \nonumber \\
	=\,&\frac{{\beta t^{\beta-1}}}{|x|}\int_0^{\infty} H_{2,2}^{1,1}\lc \frac{|x|^{\alpha}s^{\beta}}{t^{\beta}} \bigg|\substack{ (1,1),(1,\frac{\alpha}{2})\\ (1,1),(1,\frac{\alpha}{2})} \rc H_{1,1}^{1,0}\lc s^{-\beta}\bigg|\substack{ ({\beta},\beta)\\ (1,\alpha)} \rc \frac{ds}{s}\,.
\end{align}
(The representation is well defined since $\Delta_1=\sum_{j=1}^{2}\beta_j-\sum_{j=1}^{2}\alpha_j=0$, $a^*_1=\alpha_1-\alpha_2+\beta_1-\beta_2=2-\alpha$, $\delta_2=\lc\frac{\alpha}{2}\rc^{\alpha/2}\lc\frac{\alpha}{2}\rc^{-\alpha/2}=1$, $\mu_1=2-2=0$; $\Delta_2=\beta_1-\alpha_1=\alpha-\beta$, $a^*_2=\beta_1-\alpha_1=\alpha-\beta$, $\delta_2=\beta^{-\beta}$ and $\mu_2=1-\beta$.) Note that the second Fox $H$-function is nonnegative combining \cite[Property 2.4]{KS2004} with \cite[Lemma 4.5]{CHHH2017}.  By   \cite[(4.38)]{MLP2001}, 
  the first Fox $H$-function can be identified as the Green function of neutral-fractional diffusion, namely, 
\begin{align*}
	\frac{1}{|x|}H_{2,2}^{1,1}\lc |x|^{\alpha} \bigg|\substack{ (1,1),(1,\frac{\alpha}{2})\\ (1,1),(1,\frac{\alpha}{2})} \rc =\,&N_{\alpha}^{0}(|x|)=K_{\alpha,\alpha}^{0}(|x|)\\
	=\,&\frac{1}{\pi}\frac{|x|^{\alpha-1}\sin[\alpha\pi/2]}{1+2|x|^{\alpha}\cos[\alpha\pi/2]+|x|^{2\alpha}}\,.
\end{align*}
From \eqref{eq.GY_Rep.c}  it then follows 
\begin{align*}
	G^{Y}_{t}(x)=\, \beta t^{\beta-1} \int_0^{\infty} \lc\frac{s}{t}\rc^{\beta/\alpha} N_{\alpha}^{0}\lc|x|(s/t)^{\beta/\alpha}\rc H_{1,1}^{1,0}\lc s^{-\beta}\bigg|\substack{ ({\beta},\beta)\\ (1,\alpha)} \rc \frac{ds}{s}\,.
\end{align*}
Thus,  we have (without loss of generality we can set   $x=0$ in the following), 
\begin{align}\label{GY.Est_c}
	&\int_{B_{\ep}(x)}G^{Y}_{t}(y-z)dz \nonumber\\
	=\,&\int_{B_{\ep}(0)} \beta t^{\beta-1} \int_0^{\infty} \lc\frac{s}{t}\rc^{\beta/\alpha} N_{\alpha}^{0}\lc|y-z|(s/t)^{\beta/\alpha}\rc H_{1,1}^{1,0}\lc s^{-\beta}\bigg|\substack{ ({\beta},\beta)\\ (1,\alpha)} \rc \frac{ds}{s} dz \nonumber\\
	\gtrsim\,&\sin\lk\frac{\alpha\pi}{2}\rk t^{\beta-1} \int_0^{\infty}\int_{B_{\ep}(y)} \lc\frac{s}{t}\rc^{\beta/\alpha} \frac{[|z|(s/t)^{\beta/\alpha}]^{\alpha-1}}{[|z|(s/t)^{\beta/\alpha}]^{2\alpha}+1} dz \cdot H_{1,1}^{1,0}\lc s^{-\beta}\bigg|\substack{ ({\beta},\beta)\\ (1,\alpha)} \rc\frac{ds}{s} \nonumber\\
	\gtrsim\,& \sin\lk\frac{\alpha\pi}{2}\rk t^{\beta-1} \int_0^{\infty}\int_{0}^{(s/t)^{\beta/\alpha}\ep} \frac{z^{\alpha-1}}{z^{2\alpha}+1}dz \cdot H_{1,1}^{1,0}\lc s^{-\beta}\bigg|\substack{ ({\beta},\beta)\\ (1,\alpha)} \rc\frac{ds}{s} \nonumber\\
	\simeq\,& \sin\lk\frac{\alpha\pi}{2}\rk t^{\beta-1} \int_0^{\infty} \arctan\lk \frac{s^\beta \ep^{\alpha}}{t^{\beta}}\rk \cdot H_{1,1}^{1,0}\lc s^{-\beta}\bigg|\substack{ ({\beta},\beta)\\ (1,\alpha)} \rc\frac{ds}{s}  \nonumber\\
	\gtrsim\,& \sin\lk\frac{\alpha\pi}{2}\rk t^{\beta-1} \int_0^{\infty} \arctan\lk s^\beta \rk  \cdot H_{1,1}^{1,0}\lc s^{-\beta}\bigg|\substack{ ({\beta},\beta)\\ (1,\alpha)} \rc\frac{ds}{s}\,
\end{align}
for $y,z\in B_{\ep}(x)$, and $t\leq \ep^{\frac{\alpha}{\beta}}$.

Next, we need to take care of the asymptotics 
of $H_{1,1}^{1,0}\lc s^{-\beta}\bigg|\substack{ (\beta,\beta)\\ (1,\alpha)} \rc$ 
(with the notations $\Delta=\beta_1-\alpha_1=\alpha-\beta$, 
$a^*=\beta_1-\alpha_1=\alpha-\beta$, $\delta=\beta^{-\beta}$ 
and $\mu=1-\beta$) when $s$ goes to infinity. 
Similar to  \eqref{Asymp_a.H1} in \textbf{case (a)}, we   find that
\begin{equation*}
	H_{1,1}^{1,0}\lc s^{-\beta}\bigg|\substack{ ({\beta},\beta)\\ (1,\alpha)} \rc\,=\sum_{l=0}^{\infty}h_{l}^{*}\cdot s^{-\beta(l+1)/\alpha} \simeq\, h_1^{*}s^{-2\beta/\alpha}\, \qquad \text{as }s\to\infty\,,
\end{equation*}
with $h_l^{*}=\frac{(-1)^l }{\alpha\cdot l!}
\cdot \frac{1}{\Gamma\lc-\beta l \rc}$ and $ h_1^{*}>0$.
 When $s\to 0$,  similar to   \eqref{Asymp_a.H2}, 
 we have the following asymptotic estimate 
\[
 H_{1,1}^{1,0}(s^{-\beta}|\cdots)=O\lc s^{-\beta[3/2-\beta]/(\beta-\beta)}\exp\lk-C_{\alpha,\beta}\cdot s^{-\beta/(\alpha-\beta)} \rk \rc\,,\qquad s\to  0 
\]
   for some constant $C_{\alpha,\beta}>0$.
   
Finally, we obtain from \eqref{GY.Est_c} and the asymptotics
\begin{align*}
	\int_{B_{\ep}(x)}G&^{Y}_{t}(y-z)dz \\
	\gtrsim\,& \sin\lk\frac{\alpha\pi}{2}\rk t^{\beta-1} \int_0^{\infty} \arctan\lk s^\beta \rk  \cdot H_{1,1}^{1,0}\lc s^{-\beta}\bigg|\substack{ ({\beta},\beta)\\ (1,\alpha)} \rc\frac{ds}{s}\gtrsim\, C_{\alpha,\beta}\cdot t^{\beta-1}\,,
\end{align*}
for some constant $C_{\alpha,\beta}>0$. Thus, we complete 
 the proof of the small ball nondegeneracy property \emph{B($\beta-1$,$\frac{\alpha}{\beta}$)} for \textbf{case (c)}. 
\end{proof}

\begin{prop}[\textbf{HLS mass Property  and  Upper Moments for SFD}]\label{Prop.UpperM_SFD}
	Assume that $\gamma(\cdot)$ (with $\gamma-2-2H$) and $\Lambda(\cdot)$  
	satisfy the same conditions of  Theorem \ref{thm.UMB} (under the condition
	$\lambda<\min(2\alpha-\alpha/\beta,d)$) or Theorem \ref{thm.UMB2} (under the condition
	 $\lambda<\min(\alpha,d)$). 
When the parameters are in the range   given by \eqref{FD_Conds} the Green's function  $G^{Y}_{t}(x)$ satisfies the  
	\ref{A3} or \ref{A3'} with \emph{M$(2(\beta-1)-\frac{\beta\lambda}{\alpha})$}.
	  In other words, there exist  strict positive constants $C_1$ and $C_2$ 
	  independent of  $t$ and $x$ such that
	\begin{equation}\label{A.F_SFD}
	\sup\limits_{x,x'\in\RR^d} \int_{\RR^{2d}} G^{Y}_t(x-y)\Lambda(y-y') G^{Y}_t(x'-y')dydy' \leq C\cdot t^{2(\beta-1)-\frac{\beta\lambda}{\alpha}}\,,
	\end{equation}
and furthermore, 
denoting  $\mu(d\xi)=\hat{V}(\xi) d\xi$
	\begin{equation}\label{A.F_SFD'}
	\sup_{\eta\in\RR^d} \int_{\RR^d}|\hat{G}^{Y}_t(\xi-\eta)|^2 |\mu|(d\xi) \leq C_3 \cdot t^{2(\beta-1)-\frac{\beta\lambda}{\alpha}}\,.
	\end{equation}
	
	Consequently,  we have  the upper $p$-th ($p\geq 2$) moment bounds 
	for the solution $u^{\upf}(t,x)$. Namely,  there are positive 
	 constants $C_1$ and $C_2$ independent of $t$, $p$ and $x$ satisfying 
		 $$\EE[|u^{\upf}(t,x)|^p]\leq C_1\cdot \exp\lc C_2\cdot t^{\frac{\alpha(2\beta+2H-2)-\beta\lambda}{2\alpha\beta-\alpha-\beta\lambda}}\cdot p^{\frac{\beta(2\alpha-\lambda)}{2\alpha\beta-\alpha-\beta\lambda}} \rc\,.$$
\end{prop}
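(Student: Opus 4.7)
The plan is to prove the HLS-type mass property by working on the Fourier side, exploiting the explicit Fourier transform \eqref{Four.GY} of $G^Y_t$. Since $\hat G^Y_t(\xi) = t^{\beta-1}E_{\beta,\beta}(-t^\beta|\xi|^\alpha/2)$ scales cleanly in $t$, the desired exponent $\hbar = 2(\beta-1) - \beta\lambda/\alpha$ will emerge from a single change of variable, after which the estimate \eqref{A.F_SFD} is obtained by Plancherel and \eqref{A.F_SFD'} by the parallel shifted computation.

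For \eqref{A.F_SFD}, I would begin from the Parseval identity, which uses $\Lambda=\cF^{-1}\mu$ together with the isotropy of $G^Y_t$:
\[
\int_{\RR^{2d}} G^Y_t(x-y)\Lambda(y-y')G^Y_t(x'-y')dydy' = (2\pi)^{-d}\int_{\RR^d} e^{-i\xi(x-x')}|\hat G^Y_t(\xi)|^2\mu(d\xi).
\]
Taking modulus and supremum over $x,x'$ bounds the left-hand side by $(2\pi)^{-d}\int|\hat G^Y_t(\xi)|^2|\mu|(d\xi)$. Invoking the standard Mittag-Leffler decay estimate $|E_{\beta,\beta}(-r)|\leq C_\beta/(1+r)$ for $r\geq 0$ and $\beta\in(0,2)$ (see \cite{KST2006}), one obtains
\[
|\hat G^Y_t(\xi)|^2 \leq C\,\frac{t^{2(\beta-1)}}{(1+t^\beta|\xi|^\alpha/2)^2}.
\]
Under \ref{H3} (and analogously for \ref{H4}-\ref{H5}) we have $|\mu|(d\xi)\lesssim|\xi|^{\lambda-d}d\xi$, so the substitution $\xi=t^{-\beta/\alpha}\zeta$ separates the $t$-scaling to give
\[
\int|\hat G^Y_t(\xi)|^2|\mu|(d\xi)\lesssim t^{2(\beta-1)-\beta\lambda/\alpha}\int_{\RR^d} \frac{|\zeta|^{\lambda-d}}{(1+|\zeta|^\alpha/2)^2}d\zeta,
\]
the last integral being finite in polar coordinates since $0<\lambda<2\alpha$, an inequality guaranteed by $\lambda<2\alpha-\alpha/\beta$. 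For the Fourier-side estimate \eqref{A.F_SFD'} the argument runs in parallel: translating $\xi\mapsto\xi+\eta$ and rescaling $\tilde\eta=t^{\beta/\alpha}\eta$ produces the same prefactor $t^{2(\beta-1)-\beta\lambda/\alpha}$, leaving a residual $\sup_{\tilde\eta}\int|\zeta+\tilde\eta|^{\lambda-d}(1+|\zeta|^\alpha/2)^{-2}d\zeta$ which is finite by splitting into $\{|\zeta+\tilde\eta|\leq 1\}$ (integrable singularity since $\lambda<d$) and its complement (controlled by the polynomial kernel decay).

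With assumption \ref{A3} (resp.\ \ref{A3'}) verified for $\hbar=2(\beta-1)-\beta\lambda/\alpha$, the upper moment bound follows immediately from Theorem \ref{thm.UMB} (resp.\ Theorem \ref{thm.UMB2}). The identity $\hbar+1=(2\alpha\beta-\alpha-\beta\lambda)/\alpha$ together with $1-\gamma=2H-1$ converts the exponents from \eqref{eq.UMB_thm} into
\[
1+\tfrac{1-\gamma}{\hbar+1}=\tfrac{\alpha(2\beta+2H-2)-\beta\lambda}{2\alpha\beta-\alpha-\beta\lambda},\qquad 1+\tfrac{1}{\hbar+1}=\tfrac{\beta(2\alpha-\lambda)}{2\alpha\beta-\alpha-\beta\lambda},
\]
matching the claimed moment growth. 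Note also that the condition $\hbar>-1$, needed for Theorem \ref{thm.UMB} to apply, is precisely equivalent to $\lambda<2\alpha-\alpha/\beta$, explaining that endpoint in the hypothesis.

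The principal obstacle will be the uniform-in-$\eta$ control required for \eqref{A.F_SFD'}: after the rescaling the residual integral involves $|\zeta+\tilde\eta|^{\lambda-d}$, whose singularity must be handled separately from the tail behavior, uniformly in $\tilde\eta$. A secondary delicate point is ensuring that the Mittag-Leffler bound $|E_{\beta,\beta}(-r)|\lesssim 1/(1+r)$ remains uniform across the three parameter regimes of \eqref{FD_Conds}, in particular as $\beta\to 2^{-}$ in cases (b) and (c); this ultimately relies on classical asymptotic results in \cite{KST2006, CHHH2017} rather than anything fundamentally new.
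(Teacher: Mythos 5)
Your argument is correct in its conclusions and takes a genuinely different route from the paper for the principal estimate \eqref{A.F_SFD}. The paper proves \eqref{A.F_SFD} on the physical side: it applies the Hardy--Littlewood--Sobolev inequality to reduce the double integral to a power of $\|G^Y_t\|_{L^{2d/(2d-\lambda)}}$, then invokes explicit Fox $H$-function pointwise estimates from \cite[Lemma 7.1]{CHHH2017} to evaluate that norm after the rescaling $y\to t^{\beta/\alpha}y$. You instead pass to the Fourier side via Parseval and exploit only the Mittag--Leffler bound $|E_{\beta,\beta}(-r)|\lesssim (1+r)^{-1}$; the exponent falls out of the substitution $\xi=t^{-\beta/\alpha}\zeta$ in exactly the same way. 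Your approach is arguably leaner because it sidesteps the Fox $H$-function pointwise estimates entirely; the cost is that it requires the Fourier symbol $\hat G^Y_t$ rather than the kernel itself, but that is available in closed form here. For \eqref{A.F_SFD'} your computation is actually closer to the stated inequality than the paper's, which (under the heading of proving \eqref{A.F_SFD'}) in fact derives the one-sided total weighted mass bound $\sup_x\int G^Y_t(x-y)\Lambda(y)dy\lesssim t^{(\beta-1)-\beta\lambda/\alpha}$ and pairs it with $\int G^Y_t=t^{\beta-1}$, i.e.\ it verifies the $\bar M(\upmu,\upnu)$ property of Remark 3.4 rather than the shifted Fourier integral. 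The algebra converting $\hbar=2(\beta-1)-\beta\lambda/\alpha$ into the stated exponents is correct, and your remark that the threshold $\lambda<2\alpha-\alpha/\beta$ is exactly $\hbar>-1$ is a nice clarification of where that hypothesis really comes from.

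One minor gap you should repair: for \eqref{A.F_SFD'} the two-region split $\{|\zeta+\tilde\eta|\le 1\}$ versus its complement does not by itself yield a uniform bound. On the complement you can only say $|\zeta+\tilde\eta|^{\lambda-d}\le 1$ (since $\lambda<d$), leaving $\int_{\RR^d}(1+|\zeta|^\alpha/2)^{-2}d\zeta$, which is finite only when $2\alpha>d$ — false in general in the parameter range \eqref{FD_Conds} (e.g.\ $\alpha\le 1$, $d=2,3$). The correct argument needs to retain the decay of \emph{both} factors on the far field; for instance, split on $\{|\zeta|<|\tilde\eta|/2\}$, $\{|\tilde\eta|/2\le|\zeta|\le 2|\tilde\eta|\}$, and $\{|\zeta|>2|\tilde\eta|\}$ when $|\tilde\eta|\ge 1$, and observe that each piece is $O(|\tilde\eta|^{\lambda-2\alpha})$ or better (using $\lambda<\alpha\le 2\alpha$), so the supremum over $\tilde\eta$ is finite. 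The conclusion is unchanged; only the decomposition needs tightening.
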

\begin{proof}
	We need to show \emph{M$(2(\beta-1)-\frac{\beta\lambda}{\alpha})$}   under conditions \eqref{FD_Conds} and $\lambda<\min(2\alpha-\alpha/\beta,d)$, i.e.  the estimates \eqref{A.F_SFD}. This gives  the upper bound accordingly.  This has been proved  in \cite[Theorem 3.14 and Lemma 7.3]{CHHH2017}. For the sake of completeness, we give some details here.  Applying Hardy-Littelewood-Sobolev inequality (\cite[Theorem 4.3]{LL1997}), we can find 
\begin{align*}
	\sup\limits_{x,x'\in\RR^d}& \int_{\RR^{2d}} G^{Y}_t(x-y)\Lambda(y-y') G^{Y}_t(x'-y')dydy' \\
	\leq& \sup\limits_{x,x'\in\RR^d} \int_{\RR^{2d}} G^{Y}_t(x-y)|y-y'|^{-\lambda} G^{Y}_t(x'-y')dydy' \\
	\leq&  \lk \int_{\RR^{d}}|G^{Y}_t(y)|^{\frac{2d}{2d-\lambda}} dy\rk^{\frac{2d-\lambda}{d}} \simeq \lk \int_{\RR^{d}}\lt| \frac{t^{\beta-1}}{|y|^d} H_{2,3}^{2,1}\lc\frac{|y|^{\alpha}}{2^{\alpha-1}t^{\beta}}\bigg|\substack{ --\\ ---} \rc \rt|^{\frac{2d}{2d-\lambda}} dy\rk^{\frac{2d-\lambda}{d}} \\
	\simeq&\ t^{2(\beta-1)-\frac{\beta\lambda}{\alpha}}\cdot \lk \int_{\RR^{d}}\lt|\frac{1}{|y|^d} H_{2,3}^{2,1}\lc |y|^{\alpha}\bigg|\substack{ --\\ ---} \rc \rt|^{\frac{2d}{2d-\lambda}} dy\rk^{\frac{2d-\lambda}{d}} \leq C\cdot t^{2(\beta-1)-\frac{\beta\lambda}{\alpha}}\,,
\end{align*}
where we have employed change of variable $y\to t^{\beta/\alpha}\cdot y$ and the estimate of H-function $H_{2,3}^{2,1}(y)$ obtained in \cite[Lemma 7.1]{CHHH2017}.
	
	
	Next, we need to prove the inequality \eqref{A.F_SFD'} under conditions \eqref{FD_Conds} and $\lambda<\min(\alpha,d)$. Let us recall some useful estimates for the Mittag-Leffler function $E_{\beta,\beta}(-|z|^{\beta})=\sum_{k=0}^{\infty}\frac{(-|z|)^k}{\Gamma(\beta(k+1))}$ (see \cite{GLL2002} or \cite{WZO2018} for example): when $z\to \infty$, 
	\begin{align*}
		|E_{\beta,\beta}(-|z|)| \,\ls\, |z|^{-1}+|z|^{-2}\,.
	\end{align*} 	
On the other hand  the Mittag-Leffler function $E_{\beta,\beta}(-|z|^{\beta})$ is bounded when $|z|\simeq 0$ for $\beta\in (0,2)$. 
	Therefore, the following inequality holds
	\begin{align*}
		| E_{\beta,\beta}(-|z|^{\alpha})| \ \ls\ 1\wedge |z|^{-\alpha} \ \ls\ {\frac{1}{1+|z|^{\alpha}}}\,.
	\end{align*}
	Using the equation \eqref{Four.GY} and the assumptions on $\Lambda(\cdot)$, we have
	\begin{align*}
		\sup\limits_{x\in\RR^d} \int_{\RR^d} G_t^{Y}(x-y)\Lambda(y)dy \ \ls & \ \sup\limits_{x\in\RR^d} \int_{\RR^d} G_t^{Y}(x-y)|y|^{-\lambda}dy \\
		\ \simeq & \ \sup\limits_{x\in\RR^d} \int_{\RR^d} \hat{G}^{Y}_t(\xi)\cdot e^{\iota x\cdot \xi} |\xi|^{\lambda-d}d\xi \\
		\ \ls & \ t^{\beta-1}\cdot\int_{\RR^d} \lt|E_{\beta,\beta}\lc-\frac{t^\beta|\xi|^{\alpha}}{2}\rc\rt|\cdot |\xi|^{\lambda-d}d\xi \\
		\ \ls & \ t^{(\beta-1)-\frac{\beta\lambda}{\alpha}}\cdot\int_{\RR^d} \lt|E_{\beta,\beta}\lc-|\xi|^{\alpha}\rc\rt|\cdot |\xi|^{\lambda-d}d\xi\,.
	\end{align*}
	And the integral is well defined since
	\begin{align*}
		\int_{\RR^d} \lt|E_{\beta,\beta}\lc-|\xi|^{\alpha}\rc\rt|\cdot |\xi|^{\lambda-d}d\xi \ \ls & \ {\int_{\RR^d} \lk 1+\lt|\xi\rt|^{\alpha}\rk^{-1} \cdot |\xi|^{\lambda-d}d\xi <\infty}\,,
	\end{align*}
	under {the assumption $\lambda<\min(\alpha,d)$}. 
	Thus, we complete the proof. 
\end{proof}

\bibliography{Intermittency_properties_in_SPDEs_Sep8}

\begin{thebibliography}{10}

\bibitem{BC2014}
Raluca~M. Balan and Daniel Conus.
\newblock A note on intermittency for the fractional heat equation.
\newblock {\em Statist. Probab. Lett.}, 95:6--14, 2014.

\bibitem{BC2016}
Raluca~M. Balan and Daniel Conus.
\newblock Intermittency for the wave and heat equations with fractional noise
  in time.
\newblock {\em Ann. Probab.}, 44(2):1488--1534, 2016.

\bibitem{BJQ17}
Raluca~M. Balan, Maria Jolis, and Llu\'{\i}s Quer-Sardanyons.
\newblock Intermittency for the hyperbolic {A}nderson model with rough noise in
  space.
\newblock {\em Stochastic Process. Appl.}, 127(7):2316--2338, 2017.

\bibitem{CHHH2017}
Le~Chen, Guannan Hu, Yaozhong Hu, and Jingyu Huang.
\newblock Space-time fractional diffusions in {G}aussian noisy environment.
\newblock {\em Stochastics}, 89(1):171--206, 2017.

\bibitem{CHKN18}
Le~Chen, Yaozhong Hu, Kamran Kalbasi, and David Nualart.
\newblock Intermittency for the stochastic heat equation driven by a rough time
  fractional {G}aussian noise.
\newblock {\em Probab. Theory Related Fields}, 171(1-2):431--457, 2018.

\bibitem{chen17}
Xia Chen.
\newblock Moment asymptotics for parabolic {A}nderson equation with fractional
  time-space noise: in {S}korokhod regime.
\newblock {\em Ann. Inst. Henri Poincar\'{e} Probab. Stat.}, 53(2):819--841,
  2017.

\bibitem{CHS2018}
Xia Chen, Yaozhong Hu, Jian Song, and Xiaoming Song.
\newblock Temporal asymptotics for fractional parabolic {A}nderson model.
\newblock {\em Electron. J. Probab.}, 23:Paper No. 14, 39, 2018.

\bibitem{CHSX2015}
Xia Chen, Yaozhong Hu, Jian Song, and Fei Xing.
\newblock Exponential asymptotics for time-space {H}amiltonians.
\newblock {\em Ann. Inst. Henri Poincar\'{e} Probab. Stat.}, 51(4):1529--1561,
  2015.

\bibitem{CHW2018}
Yong Chen, Yaozhong Hu, and Zhi Wang.
\newblock Gradient and stability estimates of heat kernels for fractional
  powers of elliptic operator.
\newblock {\em Statist. Probab. Lett.}, 142:44--49, 2018.

\bibitem{DM2009}
Robert~C. Dalang and Carl Mueller.
\newblock Intermittency properties in a hyperbolic {A}nderson problem.
\newblock {\em Ann. Inst. Henri Poincar\'{e} Probab. Stat.}, 45(4):1150--1164,
  2009.

\bibitem{DMT2008}
Robert~C. Dalang, Carl Mueller, and Roger Tribe.
\newblock A {F}eynman-{K}ac-type formula for the deterministic and stochastic
  wave equations and other {P}.{D}.{E}.'s.
\newblock {\em Trans. Amer. Math. Soc.}, 360(9):4681--4703, 2008.

\bibitem{GLL2002}
Rudolf Gorenflo, Joulia Loutchko, and Yuri Luchko.
\newblock Computation of the {M}ittag-{L}effler function
  {$E_{\alpha,\beta}(z)$} and its derivative.
\newblock volume~5, pages 491--518. 2002.
\newblock Dedicated to the 60th anniversary of Prof. Francesco Mainardi.

\bibitem{GR2014}
I.~S. Gradshteyn and I.~M. Ryzhik.
\newblock {\em Table of integrals, series, and products}.
\newblock Elsevier/Academic Press, Amsterdam, eighth edition, 2015.
\newblock Translated from the Russian, Translation edited and with a preface by
  Daniel Zwillinger and Victor Moll, Revised from the seventh edition
  [MR2360010].

\bibitem{H2017}
Yaozhong Hu.
\newblock {\em Analysis on {G}aussian spaces}.
\newblock World Scientific Publishing Co. Pte. Ltd., Hackensack, NJ, 2017.

\bibitem{Hu19}
Yaozhong Hu.
\newblock Some {R}ecent {P}rogress on {S}tochastic {H}eat {E}quations.
\newblock {\em Acta Math. Sci. Ser. B (Engl. Ed.)}, 39(3):874--914, 2019.

\bibitem{HHLNT17}
Yaozhong Hu, Jingyu Huang, Khoa L\^{e}, David Nualart, and Samy Tindel.
\newblock Stochastic heat equation with rough dependence in space.
\newblock {\em Ann. Probab.}, 45(6B):4561--4616, 2017.

\bibitem{HHLN18}
Yaozhong Hu, Jingyu Huang, Khoa L\^{e}, David Nualart, and Samy Tindel.
\newblock Parabolic {A}nderson model with rough dependence in space.
\newblock In {\em Computation and combinatorics in dynamics, stochastics and
  control}, volume~13 of {\em Abel Symp.}, pages 477--498. Springer, Cham,
  2018.

\bibitem{HHNS15}
Yaozhong Hu, Jingyu Huang, David Nualart, and Xiaobin Sun.
\newblock Smoothness of the joint density for spatially homogeneous {SPDE}s.
\newblock {\em J. Math. Soc. Japan}, 67(4):1605--1630, 2015.

\bibitem{HHNT2015}
Yaozhong Hu, Jingyu Huang, David Nualart, and Samy Tindel.
\newblock Stochastic heat equations with general multiplicative {G}aussian
  noises: {H}\"{o}lder continuity and intermittency.
\newblock {\em Electron. J. Probab.}, 20:no. 55, 50, 2015.

\bibitem{HN2009}
Yaozhong Hu and David Nualart.
\newblock Stochastic heat equation driven by fractional noise and local time.
\newblock {\em Probab. Theory Related Fields}, 143(1-2):285--328, 2009.

\bibitem{HNS11}
Yaozhong Hu, David Nualart, and Jian Song.
\newblock Feynman-{K}ac formula for heat equation driven by fractional white
  noise.
\newblock {\em Ann. Probab.}, 39(1):291--326, 2011.

\bibitem{KS2004}
Anatoly~A. Kilbas and Megumi Saigo.
\newblock {\em {$H$}-transforms}, volume~9 of {\em Analytical Methods and
  Special Functions}.
\newblock Chapman \& Hall/CRC, Boca Raton, FL, 2004.
\newblock Theory and applications.

\bibitem{KST2006}
Anatoly~A. Kilbas, Hari~M. Srivastava, and Juan~J. Trujillo.
\newblock {\em Theory and applications of fractional differential equations},
  volume 204 of {\em North-Holland Mathematics Studies}.
\newblock Elsevier Science B.V., Amsterdam, 2006.

\bibitem{LL1997}
Elliott~H. Lieb and Michael Loss.
\newblock {\em Analysis}, volume~14 of {\em Graduate Studies in Mathematics}.
\newblock American Mathematical Society, Providence, RI, 1997.

\bibitem{lyu20}
Yangyang Lyu.
\newblock Precise high moment asymptotics for parabolic {A}nderson model with
  log-correlated {G}aussian field.
\newblock {\em Statist. Probab. Lett.}, 158:108662, 12, 2020.

\bibitem{MLP2001}
Francesco Mainardi, Yuri Luchko, and Gianni Pagnini.
\newblock The fundamental solution of the space-time fractional diffusion
  equation.
\newblock {\em Fract. Calc. Appl. Anal.}, 4(2):153--192, 2001.

\bibitem{M2013}
P\'{e}ter Major.
\newblock {\em On the estimation of multiple random integrals and
  {$U$}-statistics}, volume 2079 of {\em Lecture Notes in Mathematics}.
\newblock Springer, Heidelberg, 2013.

\bibitem{M2014}
P\'{e}ter Major.
\newblock {\em Multiple {W}iener-{I}t\^{o} integrals}, volume 849 of {\em
  Lecture Notes in Mathematics}.
\newblock Springer, Cham, second edition, 2014.
\newblock With applications to limit theorems.

\bibitem{MN2015}
Jebessa~B. Mijena and Erkan Nane.
\newblock Space-time fractional stochastic partial differential equations.
\newblock {\em Stochastic Process. Appl.}, 125(9):3301--3326, 2015.

\bibitem{N2006}
David Nualart.
\newblock {\em The {M}alliavin calculus and related topics}.
\newblock Probability and its Applications (New York). Springer-Verlag, Berlin,
  second edition, 2006.

\bibitem{NQ07}
David Nualart and Llu\'{\i}s Quer-Sardanyons.
\newblock Existence and smoothness of the density for spatially homogeneous
  {SPDE}s.
\newblock {\em Potential Anal.}, 27(3):281--299, 2007.

\bibitem{P2009}
A.~V. Pskhu.
\newblock The fundamental solution of a diffusion-wave equation of fractional
  order.
\newblock {\em Izv. Ross. Akad. Nauk Ser. Mat.}, 73(2):141--182, 2009.

\bibitem{Q21}
Ruxiao Qian.
\newblock Intermittency property of stochastic heat and wave equation with
  dobri\'c-ojeda process.
\newblock {\em Talk in NSF-CBMS Conference, University of Alabama in
  Huntsville}.

\bibitem{SSX20}
Jian Song, Xiaoming Song, and Fangjun Xu.
\newblock Fractional stochastic wave equation driven by a {G}aussian noise
  rough in space.
\newblock {\em Bernoulli}, 26(4):2699--2726, 2020.

\bibitem{WZO2018}
JinRong Wang, Yong Zhou, and D.~O'Regan.
\newblock A note on asymptotic behaviour of {M}ittag-{L}effler functions.
\newblock {\em Integral Transforms Spec. Funct.}, 29(2):81--94, 2018.

\end{thebibliography}
\bibliographystyle{plain}

\end{document}